\newtheorem{thm}{Theorem}[section]
\newtheorem{fact}[thm]{Fact}
\newtheorem{corol}[thm]{Corollary}
\newtheorem{lemma}[thm]{Lemma}
\newtheorem{quest}[thm]{Question}
\newtheorem{defi}[thm]{Definition}
\theoremstyle{remark}
\newtheorem{remark}[thm]{Remark}
\newtheorem{example}[thm]{Example}
\newcommand{\ben}{\begin{enumerate}}
\newcommand{\een}{\end{enumerate}}
\newcommand{\bit}{\begin{itemize}}
\newcommand{\eit}{\end{itemize}}
\def\R {{\Bbb R}}
\def\Q {{\Bbb Q}}
\def\nbd{{neighborhood}}
\def\N{{\Bbb N}}
\def\T{{\Bbb T}}
\def\Z {{\Bbb Z}}
\def\U{{\Bbb U}}
\def\Homeo{{\mathrm{Homeo}}\,}
\def\eps{{\varepsilon}}
\def\K {\mathcal K}
\def\QED{\nobreak\quad\ifmmode\roman{Q.E.D.}\else{\rm Q.E.D.}\fi}
\def\a {\alpha}
\def\sna{\scriptscriptstyle\mathcal{NA}}
\def\Unif{\operatorname{Unif}}
\def\Comp{\operatorname{Comp}}
\def\Norm{\operatorname{Norm}}
\def\Conf{\operatorname{Conf}}
\begin{document}

\title[]{Free non-archimedean topological groups}
\author[]{Michael Megrelishvili}
\address{Department of Mathematics,
Bar-Ilan University, 52900 Ramat-Gan, Israel}
\email{megereli@math.biu.ac.il}
\urladdr{http://www.math.biu.ac.il/$^\sim$megereli}

\author[]{Menachem Shlossberg}
\address{Department of Mathematics,
Bar-Ilan University, 52900 Ramat-Gan, Israel}
\email{shlosbm@macs.biu.ac.il}
 \urladdr{http://www.macs.biu.ac.il/$^\sim$shlosbm}

\date{May 13, 2013}
\keywords{epimorphisms, free profinite group, free topological
$G$-group, non-archimedean group, ultra-metric, ultra-norm}

\begin{abstract} We study
 free topological groups defined over uniform spaces in some subclasses of the class $\mathbf{NA}$ of non-archimedean groups.
 Our descriptions of the corresponding topologies
 show that for metrizable uniformities the corresponding free balanced, free abelian and free Boolean $\mathbf{NA}$
 groups are also metrizable. Graev type ultra-metrics determine the corresponding free topologies.
 Such results are in a striking contrast with free balanced and free abelian topological groups cases (in standard varieties).

 Another contrasting advantage is that the induced topological group actions on free abelian $\mathbf{NA}$
 groups frequently remain continuous.
One of the main applications is: any epimorphism in the category
$\mathbf{NA}$ must be dense. Moreover, the same methods improve the following
result of T.H. Fay \cite{Fay}: the inclusion of a proper open
subgroup $H \hookrightarrow G \in \mathbf{TGR}$ is not an
epimorphism in the category $\mathbf{TGR}$ of all Hausdorff
topological groups. A key tool in the proofs is Pestov's test of
epimorphisms \cite{Pest-epic}.

Our results provide a convenient way to produce surjectively universal $\mathbf{NA}$ abelian and balanced
groups. In particular, we unify and strengthen some recent results of Gao \cite{GAO} and Gao-Xuan \cite{Gao-Xuan}
 as well as classical  results about profinite groups
 which go back to Iwasawa and Gildenhuys-Lim \cite{GL}.
\end{abstract}

\maketitle

\setcounter{tocdepth}{1}

 \tableofcontents

\section{Introduction and preliminaries}
\label{s:intro}

\subsection{Non-archimedean groups and uniformities}
A topological group $G$ is said to be {\it non-archimedean} if it
has a local base $B$ at the identity consisting of open subgroups.
Notation: $G \in \mathbf{NA}$.
 If in this definition every $H \in B$ is a normal subgroup of $G$ then we obtain the subclass of all \emph{balanced} (or, $\mathbf{SIN}$) non-archimedean groups.
 Notation: $G \in \mathbf{NA_b}$.
 All prodiscrete (in particular, profinite) groups are in $\mathbf{NA_b}$.

 A uniform space is called {\it non-archimedean} if it possesses a base of
equivalence relations. Observe that a topological group is
non-archimedean if and only if its left (right) uniform structure is
non-archimedean.
 The study of non-archimedean groups and non-archimedean uniformities  has great influence on
 various fields of Mathematics: Functional Analysis,
 Descriptive Set Theory and Computer Science are only some of them.
  The reader can get a general impression  from  \cite{ro, bk, Les86, Lem03, MS1} and references therein.

\subsection{Free groups in different contexts} Recall that
according to \cite{Mar} any continuous map from a Tychonoff space
$X$ to a topological group $G$ can be uniquely extended to a continuous
homomorphism from the (Markov) free topological group $F(X)$ into
$G.$ Moreover, $X$ is a (closed) topological subspace of $F(X).$
 There are several descriptions of free topological
groups. See for example, \cite{Tk, pes85, Us-free, Sip}. Considering
 the category of uniform spaces and uniformly continuous
maps one obtains the definition of a {\it uniform free topological
group} $F(X,{\mathcal U})$ (see \cite{Num2}). A description
 of the topology of this group was given by Pestov \cite{pes85, Pest-Cat}.
\emph{Free topological $G$-groups}, the $G$-space version of the
above notions, were introduced in \cite{Me-F}.

Let $\Omega$ be a class of some Hausdorff topological groups.
We study in Section \ref{sec:ufna} a useful unifying concept of the \emph{$\Omega$-free topological groups.}


\begin{remark} \label{r:diag}
(`Zoo' of free $\mathbf{NA}$ groups)
Here we give a list of some natural subclasses $\Omega$ of $\mathbf{NA}$
and establish the notation for the corresponding free groups. 
These groups are well defined by virtue of Theorem \ref{thm:Samuel}.  
\ben
\item $\Omega =\mathbf{NA}$. The free non-archimedean group
$F_{\sna}$.
\item
 $\Omega =\mathbf{AbNA}$. The \emph{free non-archimedean abelian group} $A_{\sna}$.
\item $\Omega =\mathbf{NA_b}$. The \emph{free non-archimedean balanced group}
$F^b_{\sna}$.
\item $\Omega =\mathbf{BoolNA}$. The \emph{free non-archimedean Boolean group} $B_{\sna}$.
\item $\Omega =\mathbf{NA} \cap \textbf{Prec}$. The \emph{free non-archimedean precompact group}
$F^{\scriptscriptstyle Prec}_{\sna}$.
\item $\Omega = \textbf{Pro}$. The \emph{free profinite group}
$F_{\scriptscriptstyle Pro}.$
\item $\Omega = \textbf{BoolPro}$. The \emph{free Boolean profinite group} $B_{\scriptscriptstyle Pro}$.
\een


The following diagram demonstrates the interrelation (by the induced homomorphisms) between these free groups
defined over the same 
uniform space $(X,\mathcal{U})$.

\begin{equation*}
\xymatrix{
& F_{\sna} \ar[d] &  \\
& F_{\sna}^b \ar[dl]  \ar[dr] &  \\
A_{\sna} \ar[d] & & F^{\scriptscriptstyle Prec}_{\sna} \ar@{^{(}->}[d] \\
B_{\sna} \ar@{-->}[dr] & & F_{\scriptscriptstyle Pro} \ar[dl] \\
& B_{\scriptscriptstyle Pro} &
}
\end{equation*}

 $F^{\scriptscriptstyle Prec}_{\sna} \hookrightarrow F_{\scriptscriptstyle Pro}$
is the completion of the group $F^{\scriptscriptstyle Prec}_{\sna}$ and
$B_{\sna} \dashrightarrow B_{\scriptscriptstyle Pro}$ is a dense injection. Other arrows are onto.

\end{remark}

We give descriptions of the topologies of these groups in Sections \ref{s:final} and \ref{s:pro}.
These descriptions show that for metrizable uniformities the corresponding free balanced, free abelian and free Boolean non-archimedean groups
 are also metrizable. The same is true (and is known)
 for the free profinite group which can be treated as the free compact non-archimedean group over a uniform space.
 Such results for the subclasses of $\mathbf{NA}$ are in a striking contrast with the standard classes outside of $\mathbf{NA}$.
 Indeed, it is well known that the usual free topological and free abelian topological groups $F(X)$ and $A(X)$ respectively,
 are metrizable only for discrete topological spaces $X$. Similar results are valid for uniform spaces.

In Section \ref{s:H} we discuss the free Boolean profinite group $B_{\scriptscriptstyle Pro}(X)$ of
a Stone space $X$ which is the Pontryagin dual of the discrete Boolean group
of all clopen subsets in $X$.

 In Section \ref{s:co-un} we unify
 and strengthen some recent results of Gao \cite{GAO} and Gao-Xuan \cite{Gao-Xuan}
 about the existence and the structure of surjectively universal non-archimedean Polish groups for abelian and balanced cases;
 as well as,
 results on surjectively universal profinite groups which go back to Iwasawa and Gildenhuys-Lim \cite{GL}.

\subsection{The actions which come from automorphisms}

 Every continuous group action of $G$ on a
\emph{Stone space} $X$ (=compact zero-dimensional space) is
\emph{automorphizable} in the sense of \cite{Me-F} (see Fact
\ref{p:aut}), that is, $X$ is a $G$-subspace of a
$G$-\emph{group} $Y$. This contrasts the case of general compact
$G$-spaces (see \cite{Me-F}).
More generally, we study 
(Theorem \ref{t:AE}) also metric and uniform versions of
automorphizable actions. As a corollary we obtain that every
ultra-metric $G$-space is isometric to a closed $G$-subset of an
ultra-normed Boolean $G$-group. This result can be treated as a non-archimedean
(equivariant) version of the classical Arens-Eells isometric linearization theorem \cite{AE}.

 \subsection{Epimorphisms in topological groups}
 \label{s:ItroEpi}

  A morphism $f: M \to G$
 in a category $\mathcal{C}$ is an {\it epimorphism} if 
 there exists no pair of distinct $g,h: G\to P$ in $\mathcal{C}$ such that
$gf=hf.$
 In the category of Hausdorff topological
 groups a morphism with a dense range is obviously an epimorphism.
 K.H. Hofmann asked in the late 1960's whether the converse is true.
 This \emph{epimorphism problem} was answered by Uspenskij \cite{Us-epic} in the negative.
Nevertheless, in many natural cases, indeed, the epimorphism $M \to
G$ must be dense. For example,
in  case that the co-domain $G$ is either locally compact or
balanced,
that is, having the coinciding left and right uniformities
(see \cite{Num}).
Using a criterion of Pestov \cite{Pest-epic} and the uniform
automorphizability of certain actions by non-archimedean groups
(see Theorem \ref{thm:auna}) we prove in Theorem \ref{epic-NA}
that any epimorphism in the category $\mathbf{NA}$ must be dense.
 Moreover, we show that if a proper closed subgroup $H$ in a Hausdorff topological group $G$ induces a non-archimedean
 uniformity $\mathcal{U}$ on $G/H$ such that $(G/H,{\mathcal U}) \in \Unif^G$, then the inclusion is not an epimorphism in the category $\mathbf{TGR}$.
 We also improve the following result of T.H. Fay \cite{Fay}: for a topological group $G$
the inclusion of a proper open subgroup $H$ is not an epimorphism.

\subsection{Graev type ultra-metrics}  In his classical work \cite{GRA}, Graev proved that every metric on $X\cup \{e\}$ admits
an extension to a maximal invariant metric on $F(X).$ In the present
work we explore (especially see Theorem \ref{t:AE}) Graev type
ultra-metrics and ultra-norms on free Boolean groups which appeared
in our previous work \cite{MS1arx}).

Graev type ultra-metrics play a major role in several recent papers.
In Section \ref{sec:gra} we briefly compare two seemingly different constructions: one of
Savchenko-Zarichnyi \cite{SZ} and the other of Gao \cite{GAO}.

\subsection{Preliminaries and notations}
\label{s:prel}

All topological groups and spaces in this
paper are assumed to be Hausdorff unless otherwise is stated (for
example, in 
Section \ref{s:final}).  The cardinality of a set $X$ is denoted by
$|X|$. All cardinal invariants are assumed to be infinite. As usual
for a topological space $X$ by $w(X), d(X), \chi(X), l(X), c(X)$ we
denote the weight, density, character, Lindel\"{o}f degree and the
cellularity, respectively. By $N_x(X)$ or $N_x$ we mean the set of
all neighborhoods at $x$.

For every
group $G$ we denote the identity element by $e$
(or by $0$ for additive groups).
 A \emph{Boolean group} is a group in which every nonidentity element is of order
 two. A topological space $X$ with a
continuous group action $\pi: G \times X \to X$ of a topological
group $G$ is called a \emph{$G$-space}.  If, in addition, $X$ is a
topological group and all $g$-translations, $\pi^g: X \to X, \ x
\mapsto gx:=\pi(g,x)$, are automorphisms of $X$ then $X$ becomes a
\emph{$G$-group}. 

We say that a topological group $G$ is \emph{complete} if it is
complete in its two-sided uniformity. For every set $X$ denote by
$F(X), A(X)$ and $B(X)$ the free group, the free abelian group and
the free Boolean group over $X$ respectively. We reserve the
notation $F(X)$ also for  the free topological  group in the sense
of Markov.

\vskip 0.2cm

\noindent \textbf{Acknowledgment:} We thank  D. Dikranjan, M. Jibladze, D. Pataraya and L. Polev for several suggestions.

\section{Some facts about non-archimedean groups and uniformities}

We mostly use the standard definition of a {\it uniform space} $(X,{\mathcal
U})$ by \emph{entourages} (see for example, \cite{Eng}). An
equivalent approach via \emph{coverings} can be found in
\cite{isbo}. We denote the induced topology by $top({\mathcal U})$
and require it to be Hausdorff, namely, $\cap\{\eps \in {\mathcal
U}\}=\bigtriangleup$. By $\textbf{Unif}$ we denote the category of
all uniform spaces.

The subset $\{\eps(a):  \eps \in {\mathcal U}\}$ is a neighborhood
base at $a \in X$ in the topological space $(X,top({\mathcal U}))$,
where $\eps(a)=\{x \in X: \ (a,x) \in \eps \}$. For a nonempty
subset $A \subset X$ denote $\eps(A)=\cup \{\eps(a): \ a \in A\}$.
We say that a subset $A \subset X$ is \emph{$\eps$-dense} if
$\eps(A)=X$.

A subfamily $\a \subset {\mathcal U}$ such that each $\eps \in {\mathcal U}$ has a refinement $\delta \subset \eps$ with
 $\delta \in \a$ is said to be a (uniform) \emph{base} of ${\mathcal U}$.
 The minimal cardinality of a base of ${\mathcal U}$ is called the \emph{weight} of ${\mathcal U}$.
 Notation: $w({\mathcal U})$. Recall that ${\mathcal U}$ is metrizable (that is, ${\mathcal U}$ is induced by a metric on $X$)
 if and only if the weight is countable, $w({\mathcal U})=\aleph_0$.

 As usual, $(X,{\mathcal U})$ is \emph{precompact} (or, \emph{totally bounded}) if for every $\eps \in {\mathcal U}$ there exists
a finite $\eps$-dense subset. By the \emph{uniform Lindel\"{o}f degree} of $(X,{\mathcal U})$ we mean the minimal (infinite) cardinal $\kappa$
 such that for each entourage $\eps \in {\mathcal U}$ there exists an $\eps$-dense subset $A_{\eps} \subset X$ of cardinality $|A_{\eps}| \leq \kappa$.
 Notation: $l({\mathcal U})=\kappa$.
 We write $(X,\mathcal{U}) \in \mathbf{Unif}(\cdot, \kappa)$ whenever $l(\mathcal{U}) \leq \kappa$.

 In  terms of coverings, $l({\mathcal U}) \leq \kappa$ means that
 every uniform covering $\eps \in \mathcal{U}$ has a subcovering $\delta$ (equivalently, a subcovering $\delta \in
 \mathcal{U}$)
 of cardinality $|\delta| \leq \kappa$.
So,  we may always choose a base $\a$ of ${\mathcal U}$ such that
$|\a| = w({\mathcal U})$ and $|\delta| \leq l({\mathcal U})$ for every $\delta \in \a$.
  Note that $l({\mathcal U}) \leq \min \{l(X),d(X),c(X)\}$ and $l(\mathcal{U}) \leq w(X) \leq w(\mathcal{U}) \cdot l(\mathcal{U})$,
 where $X=(X,top({\mathcal U}))$ is a topological space induced by ${\mathcal U}$.

 \begin{remark} \label{r:St}
The class $\mathbf{Unif}(\cdot, \kappa)$ is closed under arbitrary products, subspaces and
uniformly continuous images. $(X,\mathcal{U}) \in \mathbf{Unif}(\cdot, \kappa)$ if and only if
$(X,\mathcal{U})$ can be embedded into a product $\prod_i X_i$ of metrizable uniform spaces $(X_i, \mathcal{U}_i)$
with $w(X_i)=l(\mathcal{U}_i) \leq \kappa$ such that $|I| \leq w(\mathcal{U})$.
\end{remark}

Note that $w(\mathcal{U}) =\lambda$, $l(\mathcal{U}) = \kappa$ exactly means that the (uniform) \emph{double weight}, in the sense of \cite{Kulpa}
is $dw(\mathcal{U})=(\lambda,\kappa)$.
Denote by $\mathbf{Unif}(\lambda,\kappa)$ the class of all uniform spaces with double weight $dw(X,\mathcal{U}) \leq (\lambda,\kappa)$.

\begin{defi} \label{d:univ}
Let $\K \subset \mathbf{Unif}$ be a class of 
uniform spaces. Let us say that a uniform space $X$ is:

(a) \emph{universal} in $\K$ if $X \in \K$ and $\forall$ \ $Y \in \K$ there exists a uniform embedding $Y \hookrightarrow X$.

(b) \emph{co-universal} in $\K$ if $X \in \K$ and for every $Y \in \K$
there exists a uniformly continuous onto map $f: X \to Y$ which is a quotient map of topological spaces.
\end{defi}

In \cite{Kulpa} Kulpa proves that there exists a universal uniform
space with dimension $\leq n$ and $dw(\mathcal{U}) \leq
(\lambda,\kappa)$. Every isometrically universal separable metric
space (say, $C[0,1]$, or the \emph{Urysohn space $\U$}) provides an
example of a universal uniform space in the class
$\mathbf{Unif}(\aleph_0,\aleph_0)$. For some results about
isometrically universal spaces see \cite{Katetov,Us-Ur,Husek}.
However, seemingly it is an open question if there exists a
universal uniform space in $\mathbf{Unif}(\lambda,\kappa)$. In fact,
it is enough to solve this question for
$\mathbf{Unif}(\aleph_0,\kappa)$. Indeed, if $(X,{\mathcal U})$ is
universal in
 $\mathbf{Unif}(\aleph_0,\kappa)$ then the uniform space $X^{\lambda}$ is universal in $\mathbf{Unif}(\lambda,\kappa)$.
 In order to see this recall that $\mathbf{Unif}(\cdot, \kappa)$ is closed under products, subspaces and uniformly continuous images (Remark \ref{r:St}).

For a topological group $G$ we have four natural uniformities:
\emph{left}, \emph{right}, \emph{two-sided} and \emph{lower}.
Notation: ${\mathcal U}_l, {\mathcal U}_r, {\mathcal
U}_{l \vee r}$, ${\mathcal U}_{l \wedge r}$ respectively. Note that
the weight of all these uniformities is equal to $\chi(G)$, the
\emph{topological character} of $G$. Also, $l({\mathcal
U}_l)=l({\mathcal U}_r)=l({\mathcal U}_{l \vee r})$. This invariant,
the \emph{uniform Lindel\"{o}f degree} of $G,$ is denoted by
$l^u(G)$. Always, $l({\mathcal U}_{l \wedge r}) \leq l^u(G)$.
Note that $l^u(G) \leq \kappa$ if and only if
$G$ is $\kappa$-\emph{bounded} in the sense of Guran
($\kappa$-\emph{narrow} in other terminology). That is, for every $U
\in N_e(G)$ there exists a subset $S \subset G$ such that $US=G$ and
$|S| \leq \kappa$.

\begin{lemma} \label{l:UnLind}
Let $G$ be a topological group.
\ben
\item $w(G)=\chi(G) \cdot l^u(G)$ and $dw(G,{\mathcal U}_{l \vee r}) = (\chi(G),l^u(G)) \leq (w(G),w(G))$.
\item
(Guran, see for example \cite[Theorem 5.1.10]{AT}) $l^u(G) \leq \kappa$ if and only if 
$G$ can be embedded into a product $\prod_i G_i$ of topological groups $G_i$
of topological weight $w(G_i) \leq \kappa$.
    \item \cite{DTk} (see also \cite[p. 292]{AT}) Let 
    $X \subset G$ topologically generate $G$. Consider the induced uniform subspace $(X,{\mathcal U}_X)$ where ${\mathcal U}_X={\mathcal U}_{l \vee r}|X$. Then $l^u(G) = l({\mathcal U}_X)$.
\een
\end{lemma}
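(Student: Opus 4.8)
The plan is to read off the first assertion directly from the general uniform-space inequalities recorded above, and to obtain the remaining two assertions by combining those with the cited theorems of Guran and of Dikranjan--Tkachenko.

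For (1) I would specialize the chain $l({\mathcal U}) \leq w(X) \leq w({\mathcal U}) \cdot l({\mathcal U})$ to the two-sided uniformity ${\mathcal U} = {\mathcal U}_{l \vee r}$, whose induced topology is the group topology, so that $X = (G, top({\mathcal U}_{l \vee r}))$ satisfies $w(X) = w(G)$. Using the identities $w({\mathcal U}_{l \vee r}) = \chi(G)$ and $l({\mathcal U}_{l \vee r}) = l^u(G)$ recalled before the lemma, the right-hand inequality becomes $w(G) \leq \chi(G) \cdot l^u(G)$. For the reverse inequality I would use that always $\chi(G) \leq w(G)$, while the left-hand inequality gives $l^u(G) = l({\mathcal U}_{l \vee r}) \leq w(G)$; since all invariants are infinite, $\chi(G) \cdot l^u(G) \leq w(G) \cdot w(G) = w(G)$. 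This yields $w(G) = \chi(G) \cdot l^u(G)$. The statement on the double weight is then immediate from its definition $dw({\mathcal U}) = (w({\mathcal U}), l({\mathcal U}))$, which gives $dw(G, {\mathcal U}_{l \vee r}) = (\chi(G), l^u(G))$, and the coordinatewise bound $(\chi(G), l^u(G)) \leq (w(G), w(G))$ follows from the two inequalities just established.

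For (2) I would note that $l^u(G) \leq \kappa$ is exactly the membership $(G, {\mathcal U}_{l \vee r}) \in \mathbf{Unif}(\cdot, \kappa)$, equivalently the $\kappa$-boundedness of $G$ already recorded. By Remark \ref{r:St} this embeds $G$ uniformly into a product of metrizable uniform spaces of weight $\leq \kappa$; Guran's theorem sharpens this to a \emph{topological group} embedding into a product of topological groups $G_i$ with $w(G_i) \leq \kappa$, and conversely such an embedding forces $l^u(G) \leq \kappa$ because $\mathbf{Unif}(\cdot, \kappa)$ is productive and hereditary. Here I would simply cite \cite[Theorem 5.1.10]{AT}.

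For (3) the easy inequality $l({\mathcal U}_X) \leq l^u(G)$ is immediate, since $(X, {\mathcal U}_X)$ is a uniform subspace of $(G, {\mathcal U}_{l \vee r})$ and $\mathbf{Unif}(\cdot, \kappa)$ is closed under subspaces (Remark \ref{r:St}). The reverse inequality $l^u(G) \leq l({\mathcal U}_X)$ is the substantive point and is precisely the content of \cite{DTk}, which I would invoke directly. The main obstacle is this last inequality: passing from an $\eps$-dense subset of the generating set $X$ to a witness of the $\kappa$-boundedness of all of $G$ requires controlling arbitrary products of elements of $X$, and this is exactly the nontrivial part supplied by \cite{DTk}.
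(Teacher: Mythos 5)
Your proposal is correct and matches what the paper intends: the paper gives no explicit proof of this lemma, deriving (1) from exactly the inequalities $l(\mathcal U)\le w(X)\le w(\mathcal U)\cdot l(\mathcal U)$ and $w(\mathcal U_{l\vee r})=\chi(G)$ stated just before it, and delegating (2) and (3) to the cited results of Guran and of Dikranjan--Tkachenko, as you do. Your filling-in of the reverse inequality in (1) via $\chi(G)\le w(G)$, $l^u(G)\le w(G)$ and idempotency of infinite cardinals is the intended routine argument.
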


Every uniform space $(X,{\mathcal U})$ is uniformly embedded into an (abelian) topological group $G$ such that
$w({\mathcal U})=\chi(G)$ and $l({\mathcal U}) = l^u(G)$
(i.e., $dw({\mathcal U})=dw(G,{\mathcal U}_{l \vee r})$).
In order to see this one may use Arens-Eells embedding theorem \cite{AE} taking into account Lemma \ref{l:UnLind}.3.

\subsection{Non-archimedean uniformities}

Monna (see \cite[p.38]{ro} for
more details) introduced the notion of {\it non-archimedean} uniform spaces.
A uniform space $X$ is non-archimedean if it has a base $B$ consisting of equivalence relations
 (or, partitions, in the language of coverings)
on $X.$  It is also equivalent to say that for such a space the
\emph{large uniform dimension} (in the sense of \cite[p. 78]{isbo})
is zero. For a uniform space $(X,\mathcal{U})$ denote by $Eq({\mathcal U})$ the set of all
equivalence relations on $X$ which belong to ${\mathcal U}$.

Recall that every compact space has a unique compatible uniformity.
A \emph{Stone space} is  a compact zero-dimensional space.  It is
easy to see that such a space is always non-archimedean. 
  There exist $2^{\aleph_0}$-many nonhomeomorphic metrizable Stone spaces.

A metric space $(X,d)$ is  an \emph{ultra-metric space} (or,
\emph{isosceles} \cite{Lem03})
if $d$ is an \emph{ultra-metric}, i.e., it satisfies {\it the strong
triangle inequality}
$$d(x,z) \leq max \{d(x,y), d(y,z)\}.$$

Allowing the distance between distinct elements to be zero we obtain
the definition of  an \emph{ultra-pseudometric}.
For every ultra-pseudometric $d$ on $X$ 
the open balls of radius $\varepsilon >0$ form a clopen partition of
$X.$ So, the uniformity induced by any ultra-pseudometric $d$ on $X$
is non-archimedean. A uniformity is non-archimedean if and only if
it is generated by a system $\{d_i\}_{i \in I}$ of {\it
ultra-pseudometrics}.
%


Let us say that a uniformity ${\mathcal U}$ on $X$ is \emph{discrete} if
${\mathcal U}=P(X \times X)$ (or, equivalently, $\Delta:=\{(x,x): x \in X\} \in
{\mathcal U}$).

Denote by $\kappa^{\lambda}$ the power space of the discrete uniform space with cardinality $\kappa$.

\begin{lemma} \label{l:UnLindNA}
\ben
\item The Baire space $B(\kappa)=\kappa^{\aleph_0}$ is a universal uniform space in the class $\mathbf{Unif}_{\sna}(\aleph_0,\kappa)$ of all metrizable
non-archimedean uniform spaces $(X,{\mathcal U})$ such that $l({\mathcal U}) \leq \kappa$.
    \item The generalized Baire space $\kappa^{\lambda}$ is a universal uniform space in $\mathbf{Unif}_{\sna}(\lambda,\kappa)$.
\een

\end{lemma}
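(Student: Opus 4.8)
The plan is to establish the general statement (2) and to read off (1) as the special case $\lambda=\aleph_0$, using that a non-archimedean uniformity is metrizable exactly when $w(\mathcal{U})=\aleph_0$ and that $\kappa^{\aleph_0}=B(\kappa)$. First I would fix a convenient base for the target. Since each factor of $\kappa^{\lambda}=\prod_{i\in\lambda}(\kappa,\text{discrete})$ is a discrete uniform space, the cylinder relations $R_F=\{(\xi,\eta):\ \xi_i=\eta_i \text{ for all } i\in F\}$, with $F$ ranging over finite subsets of $\lambda$, are equivalence relations forming a base of the product uniformity. This shows at once that $\kappa^{\lambda}$ is non-archimedean; moreover $|[\lambda]^{<\omega}|=\lambda$ gives $w(\mathcal{U})\leq\lambda$, and each $R_F$ has $\kappa^{|F|}=\kappa$ classes (finite $|F|$, infinite $\kappa$), so $l(\mathcal{U})\leq\kappa$. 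Hence $\kappa^{\lambda}\in\mathbf{Unif}_{\sna}(\lambda,\kappa)$, as a universal object must be.

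For the embedding, let $(X,\mathcal{U})\in\mathbf{Unif}_{\sna}(\lambda,\kappa)$. Using non-archimedeanity I would choose a base $\{\eps_i\}_{i\in I}\subset Eq({\mathcal U})$ consisting of equivalence relations with $|I|=w(\mathcal{U})\leq\lambda$ (refine a base of minimal cardinality by members of $Eq({\mathcal U})$). Each $\eps_i$, viewed as a uniform covering, is the partition of $X$ into $\eps_i$-classes; since the classes are pairwise disjoint the only subcovering is the partition itself, so $l(\mathcal{U})\leq\kappa$ forces $|X/\eps_i|\leq\kappa$. Fix injections $X/\eps_i\hookrightarrow\kappa$ and define
\[
f\colon X\to\kappa^{I}\hookrightarrow\kappa^{\lambda},\qquad f(x)=(q_i(x))_{i\in I},
\]
where $q_i\colon X\to X/\eps_i$ is the quotient map followed by the chosen injection, and $\kappa^{I}\hookrightarrow\kappa^{\lambda}$ is the coordinatewise embedding (extend by a constant on $\lambda\setminus I$ after identifying $I$ with a subset of $\lambda$).

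It then remains to verify that $f$ is a uniform embedding, and here all three properties fall out of the cylinder base. Injectivity follows from Hausdorffness, which gives $\bigcap_{i\in I}\eps_i=\Delta$. Uniform continuity follows since $(f\times f)^{-1}(R_F)=\bigcap_{i\in F}\eps_i\in\mathcal{U}$ for every finite $F\subseteq I$, a finite intersection of members of the filter $\mathcal{U}$. Finally, $f$ is a uniform isomorphism onto its image because already the single-coordinate cylinders suffice: $(f\times f)^{-1}(R_{\{i\}})=\eps_i$, so every member of the base $\{\eps_i\}$ is the $f$-preimage of a basic entourage of $\kappa^{\lambda}$, and the inverse map is uniformly continuous.

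The computation is largely routine, and I do not expect a serious obstacle. The only points demanding care are the two cardinality facts — that one may take a base of equivalence relations of size $\leq\lambda$, and that each such relation has at most $\kappa$ classes — together with the observation that for a product of discrete factors the finite cylinders $R_F$ form a base; this last point is exactly what makes both directions of the embedding immediate and is what distinguishes the non-archimedean setting from the general case treated via Remark \ref{r:St}.
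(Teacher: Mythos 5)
Your proof is correct. The paper actually states Lemma \ref{l:UnLindNA} without any proof, so there is no argument to compare against; your write-up supplies exactly the expected one, namely the non-archimedean sharpening of Remark \ref{r:St} in which the metrizable factors are replaced by discrete ones: a base of $w(\mathcal{U})$-many equivalence relations, each with at most $\kappa$ classes by $l(\mathcal{U})\leq\kappa$, yields the diagonal map into $\kappa^{\lambda}$, and the identity $(f\times f)^{-1}(R_{\{i\}})=\eps_i$ gives both uniform continuity and the embedding property in one stroke. All the steps check out, including the two cardinality verifications you flag as the only delicate points.
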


Let $(X,{\mathcal U})$ be a non-archimedean uniformity. By a result
of R. Ellis \cite{Ellis} for every uniform ultra-pseudometric on a
subset $Y \subset X$ there exists an extension to a uniform
ultra-pseudometric on $X$. Another result from \cite{Ellis} shows
that, in fact, $B(\kappa)$ is also co-universal in the class
$\mathbf{Unif}_{\sna}(\aleph_0,\kappa)$  (see  Section \ref{s:co-un}
below).

\subsection{Non-archimedean groups}

Recall that a topological group is said to be {\it non-archimedean}
if it has a local base at the identity consisting of open subgroups.
All $\mathbf{NA}$ groups are totally disconnected. The converse is
not true in general (e.g., the group $\Q$ of all rationals). However, in case that a totally disconnected
group $G$
is also locally compact then both $G$ and $Aut(G),$ 
the group of all automorphisms of $G$ endowed with the
\emph{Birkhoff topology}, are $\mathbf{NA}$ (see Theorems 7.7 and
26.8 in \cite{HR}).

The prodiscrete groups (= inverse limits of discrete groups) are in
$\mathbf{NA}.$ Every complete balanced $\mathbf{NA}$ group (in particular, every profinite group) is
prodiscrete.

\begin{example}\label{exa:nam}
We list here some non-archimedean groups: \ben
 \item $(\Z, \tau_p)$ where $\tau_p$ is the $p$-adic topology on the set of all integers $\Z$.
 \item The symmetric topological group $S_X$ with the topology of
 pointwise convergence. Note that $S_X$ is not balanced for any infinite set $X$.
 \item   $\Homeo(\{0,1\}^{\aleph_0})$, the homeomorphism group of the Cantor cube, equipped with the
 compact-open topology.
 \een
\end{example}

The $\mathbf{NA}$ topological groups from Example \ref{exa:nam} are
all \emph{minimal}, that is, each of them does not admit a strictly
coarser Hausdorff group topology. By a result of Becker-Kechris
\cite{bk} every second countable (Polish) $\mathbf{NA}$ group is
topologically isomorphic to a (resp., closed) subgroup of the
symmetric group $S_{\N}$. So, $S_{\N}$ is a universal group in the
class of all second countable $\mathbf{NA}$ groups. In fact, a more
general result remains true: $S_X$ is a universal group in the class
of all $\mathbf{NA}$ groups $G$ with the topological weight $w(G)
\leq |X|$, where $|X|$ is the cardinality of the infinite set $X$.
See for example, \cite{Hig,MS1} and also Fact \ref{t:condit} below.

By results of \cite{MS1} there are many minimal $\mathbf{NA}$
groups: every $\mathbf{NA}$ group is a group retract of a minimal $\mathbf{NA}$ group.
See Section \ref{s:H} below and also survey papers on minimal groups \cite{D-s,DM-s}.

Teleman \cite{Te} proved that every topological group is a subgroup of $\Homeo(X)$ for some compact 
$X$  and, it is also a subgroup of $Is(M,d)$, the topological group
of isometries of some metric space $(M,d)$ equipped with the
pointwise topology (see also \cite{pest-wh}).
Replacing  "compact" with "compact
zero-dimensional" and "metric"
with "ultra-metric" we obtain  characterizations for the class $\mathbf{NA}$ (see \cite{Les86} and Fact \ref{t:condit} below). 



The class  $\mathbf{NA}$ is a {\it variety} in the sense of \cite{Mor},
i.e., it is closed under taking subgroups, quotients
and arbitrary products. Furthermore,
$\mathbf{NA}$ is closed under group extensions  
(see \cite[Theorem 2.7]{Hig}). In particular, $\mathbf{NA}$ is
stable under semidirect products. We collect here some
characterizations of non-archimedean groups, majority of which  are
known. For details and more results see \cite{Les86,Pest-free, MS1}.

\begin{fact} \cite{MS1} \label{t:condit}
The following assertions are equivalent: \ben
\item $G$ is a non-archimedean topological group.

\item The right (left, two-sided, lower) uniformity on $G$ is non-archimedean.
\item $\dim \beta_G G =0$, where $\beta_G G$ is the \emph{maximal $G$-compactification} \cite{MES01} of $G$.
\item $G$ is a topological subgroup  of   $\Homeo(X)$ for some Stone
space $X$ (where $w(X)=w(G)$).
\item $G$ is a topological subgroup of the automorphism group (with the pointwise topology) $Aut(V)$ for some
discrete 
Boolean ring $V$ (where $|V|=w(G)$).
\item $G$ is a topological
subgroup of the group $Is_{Aut}(M)$ of all norm preserving
automorphisms of some ultra-normed Boolean group $(M,||\cdot||)$
(where $w(M)=w(G)$).
\item
$G$ is embedded into the symmetric topological group $S_{\kappa}$
(where $\kappa=w(G)$).
\item
$G$ is a topological subgroup of the group $Is(X,d)$ of all
isometries of an 
ultra-metric space $(X,d)$, with the topology of pointwise
convergence
(where $w(X)=w(G)$). 
\item The right (left) uniformity on $G$ can be generated by a
system $\{d_i\})_{i \in I}$ of right (left) invariant 
ultra-pseudometrics of cardinality $|I| \leq \chi(G)$.

\item $G$ is a topological subgroup of the automorphism group $Aut(K)$ for some compact abelian group $K$ (with $w(K)=w(G)$).
\item $G$ is a topological subgroup of the automorphism group $Aut(K)$ for some profinite group $K$ (with $w(K)=w(G)$).
\een
\end{fact}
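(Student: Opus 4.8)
The plan is to prove the eleven conditions equivalent by a hub-and-spokes argument built around the permutation representation (7), supplemented by a self-contained uniform core $(1)\Leftrightarrow(2)\Leftrightarrow(9)$. The equivalence $(1)\Leftrightarrow(2)$ is the observation already recorded in the Introduction: an open subgroup $H\le G$ yields the right-invariant equivalence relation whose classes are the right cosets of $H$, and conversely a base of the uniformity $\mathcal U_r$ consisting of equivalence relations has classes that are cosets of open subgroups. For $(2)\Leftrightarrow(9)$ I would invoke the stated fact that a uniformity is non-archimedean exactly when it is generated by a family of ultra-pseudometrics; the two extra points are that these generators may be taken right invariant (each being assembled from a nested family of open subgroups) and that $|I|\le\chi(G)$, which holds because $w(\mathcal U_r)=\chi(G)$.

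The hub $(1)\Rightarrow(7)$ is the coset construction. Fix a base $\{H_i\}_{i\in I}$ of open subgroups with $|I|\le\chi(G)$ and let $G$ act by left translations on the discrete disjoint union $X=\bigsqcup_{i} G/H_i$. Hausdorffness gives faithfulness, the stabilizer of the coset $eH_i$ is exactly $H_i$, so the pointwise-stabilizer subgroups recover the chosen local base $\{H_i\}$ and the pointwise topology of $S_X$ pulls back to the topology of $G$; this is a topological embedding $G\hookrightarrow S_X\cong S_\kappa$ with $\kappa=|X|\le\chi(G)\cdot l^u(G)=w(G)$ by Lemma \ref{l:UnLind}, the reverse inequality being automatic, so $\kappa=w(G)$. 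Every reverse implication back to $(1)$ is then uniform and identical in spirit: each ambient group --- $S_\kappa$, $\Homeo(X)$ for a Stone space $X$, $Is(X,d)$ for an ultra-metric $d$, the ring automorphism group $\Aut(V)$, the isometry group $Is_{Aut}(M)$ of an ultra-normed Boolean group, and $\Aut(K)$ for compact abelian or profinite $K$ --- is itself non-archimedean (its stabilizers of points, of clopen partitions, or of $\eps$-balls are open subgroups), so since $\mathbf{NA}$ is a variety closed under subgroups, any $G$ embedded in one of them lies in $\mathbf{NA}$.

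The remaining forward implications I would route through (7) and its dual avatars. $(7)\Rightarrow(8)$ is immediate, since the discrete metric is an ultra-metric and the isometry group of a discrete space is the full symmetric group. The Stone space $X$ above already realizes $(4)$, and Stone duality turns its Boolean algebra of clopen sets into a discrete Boolean ring $V$ with $\Homeo(X)\cong\Aut(V)$, giving $(4)\Leftrightarrow(5)$. Pontryagin duality applied to the underlying discrete (elementary abelian) additive group of $V$ then produces the compact Boolean group $K=\widehat V$, which is at once abelian and profinite; the ring automorphisms of $V$ embed into $\Aut(K)$, supplying $(10)$ and $(11)$ simultaneously. Condition $(6)$ follows by equipping the free Boolean group on $X$ with a Graev type ultra-norm extending the ultra-metric of $X$ --- the construction flagged in the Introduction and developed in Theorem \ref{t:AE} --- so that the isometric action of $G$ on $X$ extends to norm-preserving automorphisms. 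Finally $(1)\Leftrightarrow(3)$ rests on identifying the maximal $G$-compactification $\beta_G G$: for an $\mathbf{NA}$ group the clopen-valued functions coming from the coset spaces separate points and closed sets $G$-equivariantly, forcing $\dim\beta_G G=0$, while the converse feeds the Stone space $\beta_G G$ back into $(4)$.

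The delicate part is not the individual implications but the bookkeeping of topologies and cardinal invariants. I expect the main obstacle to be checking that each ambient group carries precisely the topology under which it is non-archimedean and under which the embedding of $G$ is a homeomorphism onto its image; this is most subtle for $\Aut(K)$ with $K$ compact abelian or profinite, where one must verify that the Birkhoff topology coincides with the pointwise topology transported across Pontryagin duality. Running alongside this is the need to keep the weight equalities $w(X)=|V|=w(K)=w(G)$ sharp rather than merely $\le$, which forces the coset base and its duals to be chosen of minimal size throughout. The clause $(3)$ is the other genuinely external point, since it depends on the theory of the maximal $G$-compactification from \cite{MES01} and on a dimension-zero argument that must be imported rather than read off from the coset picture.
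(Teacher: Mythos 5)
First, a point of comparison: the paper does not prove Fact \ref{t:condit} at all --- it is quoted from \cite{MS1} as a known result, so there is no in-paper proof to measure your argument against. Judged on its own terms, your architecture is sound and consistent with the toolkit the paper does supply: the coset equivalence relations give $(1)\Leftrightarrow(2)$; the two-valued right-invariant coset pseudometrics $d_H(x,y)=0$ iff $xy^{-1}\in H$ give $(2)\Leftrightarrow(9)$ with the cardinality bound for free (your ``nested family'' phrasing is an unnecessary detour); the hub $(1)\Rightarrow(7)$ via the action on $\bigsqcup_i G/H_i$ with the bookkeeping $|X|\leq \chi(G)\cdot l^u(G)=w(G)$ from Lemma \ref{l:UnLind} is exactly right; all reverse implications do reduce to ``the ambient group is $\mathbf{NA}$ and $\mathbf{NA}$ is subgroup-closed''; and Stone/Pontryagin duality correctly links $(4),(5),(10),(11)$, with $(11)\Rightarrow(1)$ also available directly from the quoted Hewitt--Ross result on $\Aut$ of locally compact totally disconnected groups.

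Two concrete soft spots. First, your step $(8)\Rightarrow(6)$ is stated too strongly: Theorem \ref{t:AE} makes the lifted action on $B(X)$ continuous and by automorphisms, but \emph{not} norm-preserving in general. The Graev norm satisfies $\|x\|=d(x,\mathbf{0})=\max\{d(x,x_0),1\}$, which an isometry of $(X,d)$ need not preserve when $\diam(X)>1$ (take $X=\{a,b\}$ with $d(a,b)=5$, $x_0=a$, and the swap). Your argument survives only because the space fed into $(6)$ along your chain carries the discrete $\{0,1\}$-metric, for which the extension to $\overline{X}=X\cup\{\mathbf{0}\}$ is isometric; either say this, or rescale to $\min\{d,1\}$ first. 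Second, item $(2)$ of the Fact also asserts non-archimedeanness of the lower (Roelcke) uniformity, and your coset argument only handles the one-sided and two-sided cases; the forward direction needs the observation that the relations $y\in HxH$ are equivalence relations, and the converse for the Roelcke uniformity is not as immediate as for $\mathcal{U}_r$ and deserves an argument (or a reduction to the one-sided case). Deferring $(3)$ to \cite{MES01} is acceptable --- the paper itself treats $\beta_G G$ as imported machinery.
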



\vskip 0.3cm

An \emph{ultra-seminorm} on a topological group $G$ is a 
function $p: G \to \R$ such that
\ben
\item $p(e)=0$;
\item $p(x^{-1}) =p(x)$;
\item $p(xy) \leq max\{p(x),p(y)\}$.
\een

Always, $p(x) \geq 0$. We call $p$  an
\emph{ultra-norm} if in addition $p(x)=0$ implies $x=e$.
For ultra-seminorms on an abelian additive group $(G,+)$ we prefer
the notation $||\cdot ||$ rather than $p$.
For every
ultra-seminorm on $G$ and every $a \in G$ the function
$q(x):=p(axa^{-1})$ is also an ultra-seminorm on $G$. We say that
$p$ is \emph{invariant} if $p(axa^{-1})=p(x)$ for every $a,x \in G$.
 We say that a pseudometric $d$ on $G$ is \emph{invariant} if it is left and right invariant.

\begin{lemma} \label{l:ultraProp}
\ben
\item
For every ultra-seminorm $p$ on $G$ we have: \ben
 \item  $H_{\eps}:=\{g \in G: \ p(g)< \eps \}$
is an open subgroup of $G$ for every $\eps>0$.
\item
The function $d: G \times G \to \R$ defined by $d(x,y):=p(x^{-1}y)$
is a left invariant ultra-pseudometric on $G$ and $p(x)=d(e,x)$.
\een If $p$ is invariant then $H_{\eps}$ is a normal subgroup in $G$
and $d$ is invariant.

\item Let $G \times X \to X$ be an action of a group $G$ on a set $X$.
If $d$ is a $G$-invariant ultra-pseudometric on  $X$ and $x_0 \in X$ is
a point in $X$ then $p(g):=d(x_0,gx_0)$ is an ultra-seminorm on $G$.
    \item 
   As a particular case of $(2)$,  for every left invariant ultra-pseudometric $d$ on $G$ we have the  ultra-seminorm
$
p(x):=d(e,x).
$
Here $p$ is invariant if and only if $d$ is invariant.

\item For every topological group $G$ and an open subgroup $H$ of $G$ there exists a continuous ultra-seminorm $p$ on $G$ such that $\{x \in G: \ p(x) < 1 \} = H$.
If, in addition, $H$ is normal in $G$ then we can assume that $p$ is
invariant.
\item A homomorphism $f: G \to H$ from a topological group $G$ into a non-archimedean group $H$ is continuous
if and only if for every continuous ultra-seminorm $p$ on $H$ the 
ultra-seminorm $q: G \to \R$ defined by $q(g):=p(f(g))$ is continuous.
\een
\end{lemma}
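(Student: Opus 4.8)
The plan is to verify (1)--(4) directly from the three defining axioms of an ultra-seminorm, the one conceptual point being that the \emph{strong} triangle inequality (with $\max$ in place of a sum) is exactly what forces sublevel sets to be closed under multiplication. For (1)(a): $e \in H_\eps$ since $p(e)=0$; if $x,y \in H_\eps$ then $p(xy) \le \max\{p(x),p(y)\} < \eps$, so $xy \in H_\eps$; and $p(x^{-1})=p(x)$ gives closure under inverses, so $H_\eps$ is a subgroup. Since $H_\eps = p^{-1}([0,\eps))$, it is open whenever $p$ is continuous, and an open subgroup is automatically clopen. If $p$ is invariant, then $p(axa^{-1})=p(x)$ yields $aH_\eps a^{-1}=H_\eps$, i.e. $H_\eps$ is normal. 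For (1)(b) I would put $d(x,y)=p(x^{-1}y)$ and read off $d(x,x)=p(e)=0$, symmetry from axiom (2) applied to $(x^{-1}y)^{-1}=y^{-1}x$, the strong triangle inequality from $x^{-1}z=(x^{-1}y)(y^{-1}z)$ together with axiom (3), and left invariance from $(gx)^{-1}(gy)=x^{-1}y$. In the invariant case, right invariance follows by taking $a=g^{-1}$ and $w=x^{-1}y$ in $p(awa^{-1})=p(w)$.

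For (2), writing $p(g)=d(x_0,gx_0)$ I would use $G$-invariance $d(gx,gy)=d(x,y)$ together with the symmetry and triangle inequality of $d$: $p(e)=d(x_0,x_0)=0$; $p(g^{-1})=d(x_0,g^{-1}x_0)=d(gx_0,x_0)=d(x_0,gx_0)=p(g)$; and $p(gh)=d(x_0,ghx_0) \le \max\{d(x_0,gx_0),\, d(gx_0,ghx_0)\}=\max\{p(g),p(h)\}$, the last equality using $G$-invariance on $d(gx_0,ghx_0)=d(x_0,hx_0)$. Part (3) is the special case $X=G$ under the left-translation action with $x_0=e$, for which a left-invariant ultra-pseudometric is precisely a $G$-invariant one; then (2) applies and $p(x)=d(e,x)$. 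The equivalence ``$p$ invariant $\iff$ $d$ invariant'' I would obtain by the same $a=g^{-1}$ computation as in (1)(b), combining left invariance of $d$ with $d(e,g^{-1}wg)=d(e,w)$.

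For (4) the clean choice is the two-valued function $p$ with $p(x)=0$ for $x \in H$ and $p(x)=1$ otherwise. Axioms (1) and (2) hold because $H$ is a subgroup; the strong triangle inequality holds trivially, since $p \le 1$ always and $p(xy)=0$ whenever $x,y \in H$. Continuity is immediate because $H$ is open, hence clopen, so $p$ is locally constant; and $\{x : p(x)<1\}=H$. If $H$ is normal then $x \in H \iff axa^{-1} \in H$, so $p(axa^{-1})=p(x)$ and $p$ is invariant.

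Part (5) is the substantive step and the only place the hypothesis $H \in \mathbf{NA}$ is used. The forward direction is formal: if $f$ is continuous and $p$ is a continuous ultra-seminorm, then $q=p \circ f$ is continuous, and it is an ultra-seminorm because $f$ is a homomorphism. For the converse I would reduce continuity of $f$ to continuity at the identity (legitimate for homomorphisms of topological groups) and then use the local base of \emph{open subgroups} at $e_H$: given such a $W \le H$, part (4) produces a continuous ultra-seminorm $p$ on $H$ with $\{p<1\}=W$; by hypothesis $q=p \circ f$ is continuous, so $f^{-1}(W)=\{g : q(g)<1\}=q^{-1}([0,1))$ is an open neighborhood of $e_G$. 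As $W$ runs over a neighborhood base at $e_H$, this gives continuity of $f$. I expect the main (indeed the only) obstacle to be the bookkeeping in (5): one must remember to manufacture, via (4), a continuous ultra-seminorm cutting out each basic open subgroup $W$, and to note that $H \in \mathbf{NA}$ is what guarantees a supply of such subgroups --- for a general target the argument has no starting point, which is exactly why non-archimedeanness is assumed.
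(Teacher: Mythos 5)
Your proof is correct and matches the paper's approach: the paper proves only (1)(a) explicitly (via the same strong-triangle-inequality computation for $p(xy^{-1})$), declares (1)(b), (2), (3) and (5) straightforward, and for (4) gives exactly your two-valued seminorm $p=0$ on $H$, $p=1$ off $H$. Your caveat that openness of $H_{\eps}$ presupposes continuity of $p$ is a fair point the paper also leaves implicit, and your fleshed-out argument for (5) --- reducing to continuity at the identity and cutting out each basic open subgroup of the non-archimedean target via the seminorm from (4) --- is the intended one.
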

\begin{proof}
(1) (a)
Indeed, if $p(x) < \eps$ and $p(y) < \eps$ then
$$p(xy^{-1}) \leq \max \{p(x),p(y^{-1})\}=\max \{p(x),p(y)\} < \eps.$$

If $p$ is invariant then $H_{\eps}$ is normal in $G$ since $p(axa^{-1})=p(x) < \eps$ for every $a \in G$.

 (b) of (1) is trivial and
(2), (3) and (5) are straightforward.

(4) Define the  ultra-seminorm on $G$ as $p(g)=0$ for $g \in
H$ and $p(g):=1$ if $g \notin H$.
\end{proof}

\vskip 0.3cm

A topological group $G$ is {\it balanced} (or, $\mathbf{SIN}$) if its left and right uniform
structures coincide (see for example \cite{RD}).
It is equivalent to say that $G$ has small neighborhoods which are invariant under conjugations.
That is, for every $U \in N_e(G)$ there exists $V \in N_e(G)$ such that $gVg^{-1}=V$ for every $g \in G$.
Furthermore, $G$ is balanced if and only if the uniformity on $G$
can be generated by a system of invariant pseudometrics (or
invariant seminorms).


\begin{lemma} \label{l:bal} For a balanced group $G$ the following
conditions are equivalent:
\ben
\item $G \in \mathbf{NA}$; 
\item $G$ has a local base at the identity consisting of open normal subgroups;
\item  $G$ is embedded into a product $\prod_{i \in I} G_i$ of discrete groups, where $|I| \leq \chi(G)$;
\item the 
uniformity on $G$ can be generated by a system $\{d_i\}_{i \in I}$ of invariant ultra-pseudometrics (ultra-seminorms),
where $|I| \leq \chi(G)$.
\een
\end{lemma}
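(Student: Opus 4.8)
The plan is to establish the cycle of implications $(1)\Rightarrow(2)\Rightarrow(4)\Rightarrow(3)\Rightarrow(1)$; all four statements become equivalent once this cycle is closed, and I expect essentially all the content to sit in the single step $(1)\Rightarrow(2)$, the other three being routine consequences of Lemma \ref{l:ultraProp} and of the fact that $\mathbf{NA}$ is a variety.

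For $(1)\Rightarrow(2)$, fix $U\in N_e(G)$. Since $G\in\mathbf{NA}$, I choose an open subgroup $H\subseteq U$. Using that $G$ is balanced, I pick an invariant open neighborhood $W$ of $e$ inside $H$, so that $gWg^{-1}=W$ for every $g\in G$. Let $N=\bigcap_{g\in G}gHg^{-1}$ be the normal core of $H$; it is a normal subgroup with $N\subseteq H\subseteq U$. The key point is that $W\subseteq N$: indeed $W=gWg^{-1}\subseteq gHg^{-1}$ for every $g$, so $W$ lies in the whole intersection. Hence the normal subgroup $N$ contains a neighborhood of $e$ and is therefore open. Letting $U$ range over a base at $e$ of cardinality $\chi(G)$ and passing to the associated cores yields a local base of open normal subgroups of cardinality $\le\chi(G)$. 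The only subtle point here --- and what I regard as the main obstacle --- is precisely this observation that in a balanced group the normal core of an open subgroup is again open; the $\mathbf{SIN}$ property is used exactly at this place, via the invariant neighborhood $W$.

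For $(2)\Rightarrow(4)$, take a local base $\{H_i\}_{i\in I}$, $|I|\le\chi(G)$, of open normal subgroups. For each $i$, Lemma \ref{l:ultraProp}(5) supplies a continuous invariant ultra-seminorm $p_i$ with $\{x:p_i(x)<1\}=H_i$, where invariance is available because $H_i$ is normal; Lemma \ref{l:ultraProp}(1)(b) together with Lemma \ref{l:ultraProp}(4) then turns $p_i$ into the invariant ultra-pseudometric $d_i(x,y)=p_i(x^{-1}y)$. These $d_i$ generate the topology, and since $G$ is balanced a system of invariant pseudometrics generating the topology generates the (two-sided) uniformity, which gives $(4)$.

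For $(4)\Rightarrow(3)$, given invariant ultra-pseudometrics $\{d_i\}_{i\in I}$ generating the uniformity, each ball $H_{i,n}=\{x:d_i(e,x)<1/n\}$ is an open normal subgroup by Lemma \ref{l:ultraProp}(1), so every quotient $G/H_{i,n}$ is discrete. The diagonal homomorphism $G\to\prod_{i\in I,\,n\in\N}G/H_{i,n}$ is injective and continuous, and it is a topological embedding because the subgroups $H_{i,n}$ form a neighborhood base at $e$; the number of factors is $|I|\cdot\aleph_0=|I|\le\chi(G)$. Finally $(3)\Rightarrow(1)$ is immediate: discrete groups lie in $\mathbf{NA}$, and $\mathbf{NA}$ is a variety, hence closed under products and subgroups, so $G\in\mathbf{NA}$, closing the cycle.
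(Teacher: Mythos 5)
Your proof is correct and follows essentially the same route as the paper: all the real content sits in the observation that for a balanced group the normal core $\bigcap_{g}gHg^{-1}$ of an open subgroup $H$ is again a neighborhood of $e$ and hence an open normal subgroup, which is exactly the paper's argument for $(1)\Rightarrow(2)$, while the remaining implications are routine in both treatments and differ only in the order of the cycle (your $(2)\Rightarrow(4)\Rightarrow(3)$ versus the paper's $(2)\Rightarrow(3)\Rightarrow(4)$) and in which parts of Lemma~\ref{l:ultraProp} carry them. One cosmetic slip: the continuous (invariant) ultra-seminorm with $\{x: p(x)<1\}=H$ for an open (normal) subgroup $H$ is item (4) of Lemma~\ref{l:ultraProp}, not item (5).
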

\begin{proof}
(1) $\Rightarrow$ (2): Let $V$ be a neighborhood of $e$ in $G$. We
have to show that there exists an open normal subgroup $M$ of $G$
such that $M \subseteq V$. Since $G  \in \mathbf{NA}$, 
there exists
an open subgroup $H$ of $G$ such that $H \subseteq V$. Since $G$ is
balanced $N:=\cap_{g \in G} gHg^{-1}$ is again a \nbd  \ of $e$.
Then $N$ is a normal subgroup of $G$ and $N \subseteq V$. Clearly
the subgroup $N$ is open because its interior is nonempty.

(2) $\Rightarrow$ (3): For every open, and hence closed, normal subgroup $N$ of $G$ the corresponding factor-group $G/N$ is discrete.

(3) $\Rightarrow$ (4): For every discrete group $P$ the usual $\{0,1\}$-ultra-metric is invariant.


(4) $\Rightarrow$ (1): Let $p$ be an invariant ultra-seminorm on $G$. Then the set
$$H_{\eps}:=\{g \in G: \ p(g) < \eps \}$$ is an open (normal) subgroup of $G$ by Lemma  \ref{l:ultraProp}.1.
\end{proof}

Every complete balanced $\mathbf{NA}$ group $G$ is a prodiscrete group
as it follows from assertion (3) and standard properties of projective limits (see e.g., \cite[Prop. 2.5.6]{Eng}).



\begin{lemma} \label{l:MetrBal}
Let $G \in \mathbf{NA}$.
\ben
\item
$G$ is metrizable iff its right (left)
uniformity can be generated by a single right (left) invariant ultra-metric $d$ on $G$.
\item
$G$ is metrizable and balanced iff its right (left)
uniformity can be generated by a single 
invariant ultra-metric $d$ on $G$.
\een
\end{lemma}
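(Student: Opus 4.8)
The plan is to prove Lemma \ref{l:MetrBal} by combining the general metrization theory for topological groups with the $\mathbf{NA}$ machinery already developed in the excerpt. Recall the classical Birkhoff--Kakutani theorem: a Hausdorff topological group $G$ is metrizable if and only if it is first countable, and in that case its left (right) uniformity is generated by a single left (right) invariant pseudometric, which here is actually a metric since $G$ is Hausdorff. For part (1), the forward direction is essentially this: if $G \in \mathbf{NA}$ is metrizable, then $G$ has a countable base $\{H_n\}_{n \in \N}$ at the identity consisting of open subgroups, and I would run the standard Birkhoff--Kakutani construction but take care that the resulting invariant-to-the-left pseudometric is an \emph{ultra}-metric. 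The point is that for a chain of open subgroups the natural metric $d(x,y) = 2^{-n}$ (where $n$ is maximal with $x^{-1}y \in H_n$) automatically satisfies the strong triangle inequality.

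The clean way to organize part (1) is as follows. First I would replace the given countable neighborhood base by a decreasing chain $H_0 \supseteq H_1 \supseteq H_2 \supseteq \cdots$ of open subgroups forming a base at $e$; this is possible by intersecting finitely many members of a given countable base, and finite intersections of open subgroups are again open subgroups. Next, using Lemma \ref{l:ultraProp}.5, for each $n$ I obtain a continuous ultra-seminorm $p_n$ on $G$ with $\{x : p_n(x) < 1\} = H_n$, but it is cleaner to define a single ultra-seminorm directly: set $p(x) = \inf\{2^{-n} : x \in H_n\}$ with $p(x) = 1$ if $x \notin H_0$, and $p(e)=0$. One checks $p(x^{-1}) = p(x)$ (each $H_n$ is a subgroup) and the strong triangle inequality $p(xy) \leq \max\{p(x),p(y)\}$ (if $x,y \in H_n$ then $xy \in H_n$). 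By Lemma \ref{l:ultraProp}.1(b) the function $d(x,y) := p(x^{-1}y)$ is then a left invariant ultra-pseudometric, and it is an ultra-metric because $\bigcap_n H_n = \{e\}$ (the base separates points in the Hausdorff group). Since the balls of $d$ are exactly the cosets of the $H_n$, the metric $d$ induces the topology of $G$ and generates its left uniformity. The converse direction of (1) is immediate: a single ultra-metric makes the uniformity have countable weight, hence $G$ is metrizable (and non-archimedean by the discussion preceding the lemma).

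For part (2), I would layer the balanced hypothesis onto the above. By Lemma \ref{l:bal}, a balanced $G \in \mathbf{NA}$ has a local base at $e$ consisting of open \emph{normal} subgroups, so in the chain construction above I can choose each $H_n$ normal (again finite intersections of normal open subgroups are normal open subgroups). Then the resulting ultra-seminorm $p$ is invariant, since $p(axa^{-1}) = p(x)$ follows from $axa^{-1} \in H_n \iff x \in H_n$ for normal $H_n$, and by Lemma \ref{l:ultraProp}.1 the associated ultra-pseudometric $d$ is invariant; being an ultra-metric by the same separation argument, this gives the forward direction. The converse is again trivial: a single invariant ultra-metric generates a metrizable uniformity and forces $G$ to be balanced (left and right uniformities coincide) and non-archimedean.

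The main obstacle, and the only place requiring genuine care rather than routine verification, is ensuring that the single generating ultra-(pseudo)metric can be manufactured from a \emph{countable} base while keeping the strong triangle inequality intact. The subtle step is the reduction to a \emph{nested chain} of open subgroups: an arbitrary countable base need not be totally ordered by inclusion, and the definition $p(x) = \inf\{2^{-n} : x \in H_n\}$ only yields an ultra-seminorm when the $H_n$ form a decreasing chain, so that membership in the smaller subgroup is the binding constraint. Once the chain is in place, the ultra-metric inequality is automatic and no infinite-summation estimates (as in the general Birkhoff--Kakutani metric) are needed — this is precisely the simplification afforded by the non-archimedean setting, and I would emphasize that the proof is genuinely easier than in the archimedean case.
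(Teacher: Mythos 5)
Your proof is correct, but it takes a genuinely different route from the paper's. The paper deduces part (1) from Fact \ref{t:condit}.9: for metrizable $G$ the right (left) uniformity is generated by a \emph{countable} system $\{d_n\}$ of right (left) invariant ultra-pseudometrics with $d_n\leq 1$, and these are amalgamated into a single invariant ultra-metric by the weighted supremum $d(x,y):=\sup_n\{2^{-n}d_n(x,y)\}$ (which remains an ultra-pseudometric since scaling and suprema preserve the strong triangle inequality, and is a metric by Hausdorffness); part (2) is the same argument with each $d_n$ invariant via Lemma \ref{l:bal}. You instead build the metric from scratch: you extract a decreasing chain $H_0\supseteq H_1\supseteq\cdots$ of open (normal, in the balanced case) subgroups forming a base at $e$, set $p(x)=\inf\{2^{-n}: x\in H_n\}$, and check directly that $p$ is an ultra-(semi)norm whose unit balls are the $H_n$, so that $d(x,y)=p(x^{-1}y)$ works. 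Your verification is sound --- the strong triangle inequality does hinge on the $H_n$ being nested, and you correctly flag this as the one step needing care; the separation $\bigcap_n H_n=\{e\}$ gives that $p$ is a norm, and the identification of the $d$-balls with the subgroups $H_{n+1}$ shows the uniformities agree. What each approach buys: the paper's argument is two lines long because it leans on Fact \ref{t:condit}.9 (itself nontrivial), whereas yours is self-contained, produces a metric taking values only in $\{0\}\cup\{2^{-n}\}$, and makes transparent why the non-archimedean case avoids the summation estimates of the classical Birkhoff--Kakutani construction. Both converse directions are handled the same way in either treatment.
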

\begin{proof} (1):
If $G$ is metrizable then the right  (left) uniformity of $G \in \mathbf{NA}$ can be generated by a countable
system $\{d_n\}_{n \in \N}$ of right (left) invariant ultra-pseudometrics (cf. Fact \ref{t:condit}.9).
One may assume in addition that $d_n \leq 1$. Then
the desired right (left) invariant ultra-metric on $G$ can be defined by $d(x,y):=\sup_{n \in \N} \{\frac{1}{2^n}d_n(x,y)\}$.

(2):
If $G$ is metrizable and balanced then one may assume in the proof of (1) that each $d_n$ is invariant (see Lemma \ref{l:bal}). Therefore, $d$ is also invariant.

Conversely, if the right (left) uniformity of $G$ can be generated by an invariant ultra-metric $d$ then clearly,
$G$ is metrizable and balanced.
\end{proof}



\section{Uniform free $\mathbf{NA}$ topological groups} \label{sec:ufna}

By 
$\mathbf{TGr}$ we denote the category of all 
topological
groups. By 
$\mathbf{AbGr}$, $\textbf{Prec}$, $\textbf{Pro}$ we denote its
full subcategories of all 
 abelian, precompact, and
\emph{profinite} (= inverse limits of finite groups) groups respectively. Usually we denote a category and its
class of all objects by the same symbol.

In this section, unless 
otherwise is stated,  all topological groups are considered with
respect to  the two sided uniformity ${\mathcal U}_{l \vee r}$.
Assigning to every 
topological group $G$ the uniform space $(G,{\mathcal U}_{l \vee
r})$ defines a forgetful functor
from the category of all 
topological groups $\mathbf{TGr}$ 
 to the category of
all 
uniform spaces 
$\mathbf{Unif}.$

\begin{defi} \label{d:FreeGr} Let $\Omega$ be a subclass of $\mathbf{TGr}$ 
and 
 $(X,{\mathcal U}) \in \mathbf{Unif}$ be a uniform space. By an
\emph{$\Omega$-free topological group of $(X,{\mathcal U})$} we
mean a pair $(F_{\Omega}(X,{\mathcal U}),i)$ (or, simply,
$F_{\Omega}(X,{\mathcal U})$, when $i$ is understood), where
$F_{\Omega}(X,{\mathcal U})$ is a topological group from $\Omega$
and $i: X \to F_{\Omega}(X,{\mathcal U})$ is a uniform map
satisfying the following universal property. For every uniformly
continuous map \textbf{$\varphi: (X,{\mathcal U}) \to G$} into a
topological group $G \in \Omega$ there exists a unique 
continuous homomorphism \textbf{$\Phi: F_{\Omega}(X,{\mathcal U})
\to G $} for which the following diagram commutes:
\begin{equation*} \label{equ:ufn}
\xymatrix { (X,{\mathcal U}) \ar[dr]_{\varphi} \ar[r]^{i} & F_{\Omega}(X,{\mathcal U})
\ar[d]^{\Phi} \\
  & G }
\end{equation*}
\end{defi}

If $\Omega$ is a 
subcategory of $\mathbf{TGr}$ then a categorical reformulation of this definition is that
$i: X \to F_{\Omega}(X,{\mathcal U})$ is a universal arrow from $(X,{\mathcal U})$ to the forgetful functor
$\Omega \to \mathbf{Unif}$.

\begin{remark} \label{r:short}
Also we use a shorter notation dropping ${\mathcal U}$ (and $X$) when the uniformity (and the space) is understood.
For example, we may write $F_{\Omega}(X)$ (or, $F_{\Omega}$) instead of $F_{\Omega}(X,{\mathcal U})$.
\end{remark}

Every Tychonoff space $X$  admits the greatest compatible uniformity, the so-called  \emph{fine uniformity}, which we
denote by $\mathcal{U}_{max}$. The corresponding free group $F_{\Omega}(X,{\mathcal U_{max}})$ is denoted by
$F_{\Omega}(X)$ and is called the \emph{$\Omega$-free topological group} of $X$. For $\Omega=\mathbf{TGr}$ and $\Omega=\mathbf{AbTGr}$ we get the classical \emph{free topological group} and \emph{free abelian topological group} (in the sense of Markov) of $X$
keeping the standard notation: $F(X)$ and $A(X)$.


\subsection{The existence}

\begin{defi}\label{def:semiv}
A nonempty subclass $\Omega$ of 
 $\mathbf{TGr}$ is said to be:
\ben
\item \emph{$\overline{S}C$-variety} (see \cite{AT}) if
$\Omega$ is closed under:
a) cartesian products;
b) \textbf{closed} subgroups.
\item \emph{$SC$-variety} if
$\Omega$ is closed under:
a) cartesian products;
b) subgroups.
\item \emph{variety} (see \cite{Mor})
 if $\Omega$ is closed under:
 a) cartesian products;
 b) subgroups;
 c) quotients.
\een
\end{defi}

Note that while \textbf{Pro} is an $\overline{S}C$-variety, all other subclasses $\Omega$ of 
$\mathbf{TGr}$ from Remark \ref{r:diag} are varieties.

\begin{thm} \label{thm:Samuel}
Let $\Omega$ be a subclass of 
 $\mathbf{TGr}$ which is an $\overline{S}C$-variety and
$(X,{\mathcal U})$ be a uniform space.
 \ben
\item The uniform free topological group $F_{\Omega}:=F_{\Omega}(X,{\mathcal U})$ exists.

\item $F_{\Omega}$ is unique up to a topological group isomorphism.

\item
\ben
\item
 $F_{\Omega}$ is topologically  generated by $i(X) \subset F_{\Omega}$.
\item
 For every uniform map $\varphi: (X,{\mathcal U}) \to G$ into a topological group $G \in \Omega$
there exists a continuous homomorphism $\Phi: F_{\Omega} \to G $ such that $\Phi \circ i=\varphi$.
\een

 Moreover, these two properties characterize $F_{\Omega}(X,{\mathcal
U})$.
\item If $\Omega$ is an $SC$-variety then $F_{\Omega}$ is algebraically  generated by $i(X)$.
\een
\end{thm}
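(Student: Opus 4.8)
The plan is to prove all four parts at once by the classical Samuel-type construction of a free object, realizing $F_{\Omega}$ as the closed subgroup topologically generated by the diagonal image of $X$ inside a large product of test groups. The only genuinely non-formal point is the verification of a \emph{solution set} condition, i.e. that it suffices to test the universal property against a \emph{set} (rather than a proper class) of maps. First I would reduce to generating maps: given any uniformly continuous $\varphi\colon(X,\mathcal U)\to G$ with $G\in\Omega$, replace $G$ by $G_{\varphi}:=\overline{\langle\varphi(X)\rangle}$, the closed subgroup topologically generated by $\varphi(X)$. Since $\Omega$ is an $\overline{S}C$-variety it is closed under closed subgroups, so $G_{\varphi}\in\Omega$, and $\varphi$ factors through the corestriction $X\to G_{\varphi}$; thus it is enough to consider maps whose image topologically generates the target.

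The main obstacle is to bound the size of such pairs $(G_{\varphi},\varphi)$. Here one must \emph{not} attempt to bound the weight: a topologically finitely generated $\mathbf{NA}$ group need not be metrizable and its character $\chi(G_{\varphi})$ (hence $w(G_{\varphi})=\chi(G_{\varphi})\cdot l^u(G_{\varphi})$, see Lemma \ref{l:UnLind}) can be arbitrarily large. Instead I would bound the \emph{cardinality} via density. The abstract subgroup $\langle\varphi(X)\rangle$ is dense in $G_{\varphi}$ and has cardinality at most $\mu:=|X|\cdot\aleph_0$, whence $d(G_{\varphi})\le\mu$; since $G_{\varphi}$ is Hausdorff, the Posp\'{\i}\v{s}il cardinality bound gives $|G_{\varphi}|\le 2^{2^{\mu}}$. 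Consequently there is only a set of isomorphism types of pairs $(G_{\varphi},\varphi)$: each may be taken to have underlying set a subset of the fixed cardinal $\lambda:=2^{2^{\mu}}$, its topology lies in $\mathcal P(\mathcal P(\lambda))$, and $\varphi\in G_{\varphi}^{X}\subseteq\lambda^{X}$, all of which range over sets.

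I would then fix a set $\{(\varphi_j,G_j)\}_{j\in J}$ of representatives of all such pairs (with $G_j\in\Omega$, each $\varphi_j$ uniformly continuous and $\varphi_j(X)$ topologically generating $G_j$), form the product $P:=\prod_{j\in J}G_j\in\Omega$, and let $i:=(\varphi_j)_{j\in J}\colon X\to P$ be the diagonal map, which is uniformly continuous into the product uniformity. Define $F_{\Omega}(X,\mathcal U):=\overline{\langle i(X)\rangle}$, a closed subgroup of $P$ and hence in $\Omega$; by construction $i(X)$ topologically generates it, giving $(3)(a)$. For the universal property, given $\varphi\colon(X,\mathcal U)\to G$ I pass to $G_{\varphi}$, identify $(G_{\varphi},\varphi)$ with some $(G_j,\varphi_j)$, and set $\Phi:=\iota\circ\pi_j|_{F_{\Omega}}$, where $\pi_j\colon P\to G_j$ is the projection and $\iota\colon G_{\varphi}\hookrightarrow G$ the inclusion; then $\Phi$ is a continuous homomorphism with $\Phi\circ i=\varphi$, proving $(3)(b)$. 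Uniqueness of $\Phi$ follows because two continuous homomorphisms agreeing on $i(X)$ agree on the abstract group it generates, and hence, by continuity and Hausdorffness of $G$, on its closure $F_{\Omega}$; this yields the universal property of Definition \ref{d:FreeGr}, and the standard universal-arrow argument gives uniqueness of $F_{\Omega}$ up to topological isomorphism, i.e. $(1)$ and $(2)$. The same uniqueness shows that the two properties $(3)(a)$ and $(3)(b)$ characterize $F_{\Omega}$.

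Finally, for $(4)$, if $\Omega$ is an $SC$-variety it is closed under \emph{all} subgroups, so one may replace every $G_{\varphi}$ by the abstract subgroup $\langle\varphi(X)\rangle\in\Omega$ and, in the construction above, take $F'_{\Omega}:=\langle i(X)\rangle$ (the algebraic subgroup of $P$, with the subspace topology) in place of its closure. The factorization argument goes through verbatim, since each $\varphi$ now factors through $\langle\varphi_j(X)\rangle$ and $\Phi=\iota\circ\pi_j|_{F'_{\Omega}}$ remains a continuous homomorphism; hence $F'_{\Omega}$ satisfies the same universal property. By the uniqueness in $(2)$ it is topologically isomorphic to $F_{\Omega}$, which is therefore algebraically generated by $i(X)$.
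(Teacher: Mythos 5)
Your proposal is correct and follows essentially the same Samuel--Kakutani construction as the paper: reduce to topologically generating maps into closed subgroups, bound $|G_\varphi|\le 2^{2^{\mathfrak m}}$ via density and Hausdorffness (the paper uses this same Pospíšil-type bound implicitly), take the closed subgroup topologically generated by the diagonal image in the product of a set of representatives, and pass to the algebraic subgroup for part (4). No gaps; the argument matches the paper's in all essential steps.
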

\begin{proof}
$(1):$ Existence.
We give here a standard categorical construction (with some  minor adaptations)
which
goes back to Samuel and Kakutani.
Denote 
 $\mathfrak{m}:=\max(|X|,\aleph_0)$.
Let
$\mathfrak{F}$ be a subclass of $\Omega$
 such that
$|G|\leq 2^{2^{\mathfrak{m}}}$ for $G\in \mathfrak{F},$ distinct
members of $\mathfrak{F}$ are not topologically isomorphic, and
every topological group $H$ for which $|H|\leq 2^{2^{\mathfrak{m}}}$
is topologically isomorphic with some $G\in \mathfrak{F}.$ Let
$\{(G_j,\varphi_j)\}_{j\in J}$ 
 consist of all pairs
$(G_j,\varphi_j)$ where $G_j\in \mathfrak{F}$ and $\varphi_j$ is a
uniformly continuous mapping of $X$ into $G_j.$ It is easy to see
that $\mathfrak{F}$ is a set. Then $J$ is a set as well. If $H \in
\Omega$ is a topological group, $|H|\leq 2^{2^{\mathfrak{m}}},$ and
$\varphi$ is a uniformly continuous  mapping of $X$ into $H$, then
there is  $j_0\in J$ and a topological isomorphism $\tau:G_{j_0}\to
H$ such that  $\tau\circ \varphi_{j_0}=\varphi.$ In such a case we
identify the pair $(H,\phi)$ with the pair
$(G_{j_0},\varphi_{j_0}).$  Let $M=\prod_{j\in J}G_j.$  For $x\in
X,$ define $i(x)\in M$  by $i(x)_j=\varphi_j(x).$ Finally,
let $F_{\Omega}:=F_{\Omega}(X,{\mathcal U})$ be the closed subgroup of $M$
topologically generated by $i(X).$ Since the class $\Omega$ is an
$\overline{S}C$-variety, both $M$ and $F_{\Omega}$
are in $\Omega$ by  conditions (a) and (b) of Definition
\ref{def:semiv}.1. Clearly,
 $i:(X,{\mathcal U})\to F_{\Omega}$ is uniformly continuous. Now, if
$\varphi$ is a uniformly continuous mapping of $X$ into any
 topological group $G \in \Omega,$  the image $\varphi(X)$ in $G$ is contained
in the subgroup $P:=cl(<\varphi(X)>)$ of $G$, where $<\varphi(X)>$ is the subgroup of $G$ algebraically generated by $\varphi(X)$.
Since $|<\varphi(X)>| \leq \mathfrak{m}=\max \{|X|, \aleph_0\}$ and $P$ is Hausdorff
we have $|P| \leq 2^{2^{\mathfrak{m}}}.$
Thus, by our assumption on $\mathfrak{F},$ the pair $(P,\varphi)$ is
isomorphic to a pair $(G_{j_0},\varphi_{j_0})$ for some $j_0 \in J$.
Let $\pi_{j_0}: F_{\Omega} \to G_{j_0}$ be the
restriction on $F_{\Omega} \subseteq \prod_{j \in J}G_j$ of the projection onto the $j_0$-th axis. Then $\varphi= \Phi
\circ i$, where
 $\Phi:=\tau \circ \pi_{j_0}.$ Finally, note that $\Phi$ is unique since
$<i(X)>$ is a dense subgroup of  $F_{\Omega}$ and $G$ is Hausdorff.

$(2):$ Uniqueness. Assume that there exist Hausdorff topological
groups $F_1,F_2$ and uniformly continuous maps $i:X\to F_1, \ j:X\to
F_2$ such that  
the pairs $(i,F_1)$ and $(j,F_2)$ satisfy
the universal property. Then by Definition \ref{d:FreeGr} there exist \emph{unique} continuous
homomorphisms $\Phi_1:F_2\to F_1, \Phi_2:F_1\to F_2$ such that
$\Phi_2\circ i=j, \ \Phi_1\circ j=i.$
For $\Phi\in\{\Phi_1\circ
\Phi_2,Id_{F_1}\}$ we have $\Phi\circ  i=i,$ and thus $\Phi_1\circ
\Phi_2=Id_{F_1}.$ Similarly, 
$\Phi_2\circ \Phi_1=Id_{F_2}.$ Therefore, $\Phi_2:F_1\to F_2$ is a topological
group isomorphism.
%

(3) Assertion $(a)$ follows from the constructive description of
$F_{\Omega}$ given in the proof of $(1),$ and from
$(2).$ Property $(b)$ is a part of the definition of
$F_{\Omega}.$ These two properties characterize
$F_{\Omega}$ since
 the latter group is Hausdorff.

(4) As  an $SC$-variety $\Omega$ is closed under (not necessarily
closed) subgroups. So in the constructive description appearing in
the proof of $(1)$ we may define $F_{\Omega}$ as the
subgroup \emph{algebraically} generated by $i(X).$ Apply $(2)$ to conclude
the proof.
\end{proof}

The completion of $G$ with respect to the two-sided uniformity is denoted by $\widehat{G}$.
The proof of the following observation is straightforward.

\begin{lemma} \label{l:compl}
Let $\Omega$ be an $\overline{S}C$-variety. Denote by
$\Omega_C$ its subclass of all complete groups from $\Omega$. Then $F_{\Omega_C}=\widehat{F_{\Omega}}$.
\end{lemma}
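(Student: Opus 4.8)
The plan is to show that the two-sided completion $\widehat{F_\Omega}$, equipped with the uniform map $\hat\imath:=\eta\circ i$ (where $\eta\colon F_\Omega\hookrightarrow\widehat{F_\Omega}$ is the canonical dense embedding), satisfies the universal property characterizing $F_{\Omega_C}(X,\mathcal U)$; the identity $F_{\Omega_C}=\widehat{F_\Omega}$ then follows from the uniqueness part of Theorem \ref{thm:Samuel}(2). First I would record that $\Omega_C$ is itself an $\overline{S}C$-variety, so that Theorem \ref{thm:Samuel} applies and $F_{\Omega_C}$ exists. This rests on two standard facts about the two-sided uniformity: it is compatible with products (so an arbitrary product of complete groups is complete, being a product of complete uniform spaces), and a closed subgroup of a complete group is complete, being a closed subset of a complete uniform space. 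Combining these with the closure of $\Omega$ under products and closed subgroups shows that $\Omega_C$ is closed under the same two operations.

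Next I would verify the universal property. Given a uniformly continuous map $\varphi\colon(X,\mathcal U)\to G$ into a group $G\in\Omega_C$, the universal property of $F_\Omega$ (Theorem \ref{thm:Samuel}, applied to $G\in\Omega$) yields a unique continuous homomorphism $\Phi_0\colon F_\Omega\to G$ with $\Phi_0\circ i=\varphi$. Being a continuous homomorphism, $\Phi_0$ is uniformly continuous for the two-sided uniformities, and since $G$ is complete it extends uniquely to a uniformly continuous map $\Phi\colon\widehat{F_\Omega}\to G$. The identities defining a homomorphism persist from the dense subgroup $F_\Omega$ to all of $\widehat{F_\Omega}$ by continuity, so $\Phi$ is a continuous homomorphism with $\Phi\circ\hat\imath=\varphi$; its uniqueness follows from the density of $F_\Omega$ in $\widehat{F_\Omega}$, the fact that $G$ is Hausdorff, and the uniqueness of $\Phi_0$.

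The main obstacle is the remaining membership claim $\widehat{F_\Omega}\in\Omega_C$, that is $\widehat{F_\Omega}\in\Omega$; without it $\widehat{F_\Omega}$ is not an admissible object and the uniqueness argument cannot be invoked. Here one cannot simply complete the building blocks $G_j$ of the construction in Theorem \ref{thm:Samuel}, since their completions need not lie in $\Omega$. Instead I would realize $\widehat{F_\Omega}$ as a closed subgroup of a product of \emph{complete} members of $\Omega$: letting $\{f_s\colon F_\Omega\to H_s\}_s$ range over the continuous homomorphisms into complete groups $H_s\in\Omega_C$ (restricting, as in the proof of Theorem \ref{thm:Samuel}, to a representative set of such targets), each $f_s$ extends to $\widehat{f_s}\colon\widehat{F_\Omega}\to H_s$, and these assemble into $e\colon\widehat{F_\Omega}\to\prod_s H_s$ with target in $\Omega_C$. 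If $e$ is a topological embedding onto a closed subgroup, then $\widehat{F_\Omega}\in\Omega$ by closure under products and closed subgroups. The delicate point — and the crux of the lemma — is precisely that the homomorphisms into complete members of $\Omega$ already generate the topology of $F_\Omega$, i.e.\ that $e$ is initial on $F_\Omega$; in the varieties relevant to this paper this is harmless because completeness is absorbed by the defining closure properties (for instance, for $\Omega=\mathbf{NA}\cap\textbf{Prec}$ any completion is compact, hence precompact and non-archimedean, so it remains in $\Omega$).

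Once $e$ is shown to be a closed embedding, $\widehat{F_\Omega}$ is an object of $\Omega_C$ that, by the previous paragraph, enjoys the universal property of $F_{\Omega_C}(X,\mathcal U)$; the uniqueness in Theorem \ref{thm:Samuel}(2) then gives a topological group isomorphism $\widehat{F_\Omega}\cong F_{\Omega_C}$ commuting with $\hat\imath$ and $j$, which is the asserted equality. I expect the two uniformity facts and the extension-by-density argument to be routine, and essentially all the content to sit in establishing $\widehat{F_\Omega}\in\Omega$.
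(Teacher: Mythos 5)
Your reduction is the right one, and your first two paragraphs are exactly the intended ``straightforward'' argument: $\Omega_C$ is again an $\overline{S}C$-variety (products and closed subgroups of complete groups are complete in the two-sided uniformity), so $F_{\Omega_C}$ exists by Theorem \ref{thm:Samuel}, and once one knows $\widehat{F_\Omega}\in\Omega_C$ the universal property transfers by extending homomorphisms over the dense subgroup $F_\Omega$ and invoking uniqueness. The gap is in your third paragraph: you correctly isolate the membership claim $\widehat{F_\Omega}\in\Omega$ as the crux, but you do not prove it, and the route you propose cannot close it. Showing that the evaluation map $e$ into a product of complete members of $\Omega$ is initial on $F_\Omega$ amounts to showing that $F_\Omega$ uniformly embeds into a complete member of $\Omega$ --- which is essentially the statement to be established --- so the detour is circular. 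Moreover, for a truly arbitrary $\overline{S}C$-variety the needed fact does not follow from the axioms: the $\overline{S}C$-variety generated by $\Q$ (closed subgroups of powers of $\Q$) consists entirely of totally disconnected groups, yet $\widehat{\Q}=\R$ is connected, so closure under products and closed subgroups does not imply closure under completion.

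What makes the lemma an ``observation'' is that each class $\Omega$ to which it is applied (those of Remark \ref{r:diag}, and the classes appearing in Theorem \ref{t:surj}) is visibly stable under two-sided completion: the closure in $\widehat{G}$ of an open subgroup of $G$ contains a nonempty open set and is therefore an open subgroup, so $\widehat{G}\in\mathbf{NA}$ whenever $G\in\mathbf{NA}$; the identities $xy=yx$ and $x+x=0$ and the SIN property persist to the completion by density and continuity; and the completion of a precompact group is compact, hence precompact (and profinite in the non-archimedean case). Given this one observation, $\widehat{F_\Omega}\in\Omega_C$ is immediate and your first two paragraphs finish the proof. So the correct repair is not to analyze the map $e$, but to verify, class by class (or to assume as a standing hypothesis on $\Omega$), that $\Omega$ is closed under completion.
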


\subsection{Classical constructions}

For $\Omega=\mathbf{TGr}$ the universal object
$F_{\Omega}(X,{\mathcal U})$  is the \emph{uniform free topological
group} of $(X,{\mathcal U}).$ Notation: $F(X,{\mathcal U}).$  This
was invented by Nakayama and studied by Numella \cite{Num2} and Pestov \cite{pes85, Pest-Cat}.

In particular, Pestov  described
 the topology of $F(X,{\mathcal U})$  (\cite{pes85}, see also  Remark \ref{rem:pests} below).
If $\Omega =\mathbf{AbGr}$ then
$F_{\Omega}(X,{\mathcal U})$ is the
 \emph{uniform free
abelian topological group} of $(X,{\mathcal U})$. Notation:
$A(X,{\mathcal U}).$ In \cite{Sip}  Sipacheva  used Pestov's
description of the free topological group $F(X)$ to generate a description of
 the free abelian topological group $A(X).$ Similarly, one can prove
 the following:

 \begin{thm} (compare with \cite[page 5779]{Sip})
Let $(X,{\mathcal U})$ be a uniform space.  For each $n\in \N$, we
fix an arbitrary entourage $W_n\in {\mathcal U}$ of the diagonal in
$X \times X$ and set $W = \{W_n\}_{n\in \N},$ $$ U(W_n) = \{\epsilon
x- \epsilon y: (x, y)\in W_n, \epsilon= \pm 1\},$$ and
$$\widetilde{U}(W) =\bigcup_ {n\in \N} (U(W_1) + U(W_2) + \cdots + U(W_n)).$$ The sets $\widetilde{U}(W)$,
where $W$ are all sequences of uniform entourages of the diagonal,
form a neighborhood base at zero for the topology of the uniform
free abelian topological group $A(X,{\mathcal U}).$
\end{thm}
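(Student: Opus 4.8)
The plan is to exhibit a group topology $\tau$ on the abstract free abelian group $A(X)$ whose neighborhood base at $0$ is exactly the family $\{\widetilde{U}(W)\}$, and then to identify $\tau$ with the free topology $\tau_{\mathrm{free}}$ of $A(X,\mathcal U)$ using the universal property together with the uniqueness clause of Theorem \ref{thm:Samuel}. This is the abelian (and uniform) analogue of Sipacheva's argument, where Pestov's non-abelian word combinatorics collapse to additive partial sums. First I would check that $\{\widetilde{U}(W)\}$ satisfies the standard axioms for a neighborhood base at $0$ of a group topology. Each $U(W_n)$ is symmetric (the sign $\epsilon=\pm 1$ is built in) and contains $0$ (take the diagonal $(x,x)\in W_n$), so every $\widetilde{U}(W)=\bigcup_n V_n$ with $V_n=U(W_1)+\cdots+U(W_n)$ is an increasing union of symmetric sets containing $0$. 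The filter-base property follows by componentwise intersection: for $W,W'$ put $W''_n=W_n\cap W'_n$, so that $\widetilde{U}(W'')\subseteq \widetilde{U}(W)\cap\widetilde{U}(W')$.

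The essential axiom is the halving condition $\widetilde{U}(W')+\widetilde{U}(W')\subseteq\widetilde{U}(W)$. Here I would set $W'_n:=W_{2n-1}\cap W_{2n}$, so that $U(W'_n)\subseteq U(W_{2n-1})$ and $U(W'_n)\subseteq U(W_{2n})$; then a pair of elements of $\widetilde{U}(W')$, both lying in some $U(W'_1)+\cdots+U(W'_N)$, can be placed respectively into an ``odd'' part inside $U(W_1)+U(W_3)+\cdots+U(W_{2N-1})$ and an ``even'' part inside $U(W_2)+U(W_4)+\cdots+U(W_{2N})$, so that their sum lies in $V_{2N}\subseteq\widetilde{U}(W)$. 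Since the sets are symmetric, this is precisely the condition $\widetilde{U}(W')-\widetilde{U}(W')\subseteq\widetilde{U}(W)$ needed for a group topology $\tau$.

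Next I would verify the two halves of the universal behavior of $\tau$. On the one hand $i\colon (X,\mathcal U)\to (A(X),\tau)$ is uniformly continuous: for the basic neighborhood $\widetilde{U}(W)$ the entourage $W_1$ works, since $(x,y)\in W_1$ forces $x-y\in U(W_1)\subseteq\widetilde{U}(W)$. On the other hand, every uniformly continuous $\varphi\colon (X,\mathcal U)\to G$ into an abelian topological group extends to a $\tau$-continuous homomorphism $\Phi$: given a neighborhood $O=O_0$ of $0$ in $G$, I would choose symmetric neighborhoods $O_n$ with $O_n+O_n\subseteq O_{n-1}$, so that $O_1+\cdots+O_n\subseteq O$ for all $n$ by a telescoping induction, and then, using uniform continuity of $\varphi$, pick entourages $W_n$ with $\varphi(x)-\varphi(y)\in O_n$ whenever $(x,y)\in W_n$. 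Then $\Phi(U(W_n))\subseteq O_n$, hence $\Phi(V_n)\subseteq O_1+\cdots+O_n\subseteq O$ and $\Phi(\widetilde{U}(W))\subseteq O$, which gives continuity of $\Phi$ at $0$ and so everywhere.

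Finally I would identify $\tau$ with $\tau_{\mathrm{free}}$. Applying the extension step to $\varphi=i\colon (X,\mathcal U)\to A(X,\mathcal U)$ shows that the identity map $(A(X),\tau)\to (A(X),\tau_{\mathrm{free}})$ is continuous, i.e.\ $\tau_{\mathrm{free}}\subseteq\tau$; since $\tau_{\mathrm{free}}$ is Hausdorff, the finer topology $\tau$ is Hausdorff too, so $(A(X),\tau)\in\mathbf{AbGr}$. As $i\colon (X,\mathcal U)\to (A(X),\tau)$ is uniformly continuous, the universal property of $A(X,\mathcal U)$ yields a continuous homomorphism $A(X,\mathcal U)\to (A(X),\tau)$ that is the identity on the generators, whence $\tau\subseteq\tau_{\mathrm{free}}$. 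By uniqueness (Theorem \ref{thm:Samuel}) we conclude $\tau=\tau_{\mathrm{free}}$, so $\{\widetilde{U}(W)\}$ is indeed a neighborhood base at $0$ for $A(X,\mathcal U)$. The hard part is the bookkeeping in the halving step (the parity splitting $W'_n=W_{2n-1}\cap W_{2n}$) and the telescoping estimate for $\Phi$, where one must handle partial sums of arbitrary length uniformly; everything else is routine once the abelian simplification is in place.
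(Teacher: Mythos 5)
Your proof is correct, but note that the paper does not actually write out a proof of this statement: it only remarks that the description can be obtained by following Sipacheva's method, i.e.\ by specializing Pestov's description of $F(X,\mathcal U)$ (the neighborhoods $[(V_{\psi_n})]$ of Remark \ref{rem:pests}) and pushing it through the abelianization. Your route is genuinely different and more self-contained: you build the topology $\tau$ with base $\{\widetilde U(W)\}$ from scratch, verify the group-topology axioms directly (the parity splitting $W'_n=W_{2n-1}\cap W_{2n}$ for the halving condition and the telescoping chain $O_n+O_n\subseteq O_{n-1}$ for the extension step are exactly right, and both use commutativity in an essential way), and then pin down $\tau=\tau_{\mathrm{free}}$ by the universal property and the uniqueness clause of Theorem \ref{thm:Samuel}. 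What the paper's intended route buys is economy (the non-abelian description is already available and the abelian case is a formal consequence); what yours buys is independence from Pestov's theorem and a transparent picture of where each axiom is used.

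One step should be made explicit. When you say that the extension of $i$ gives ``the identity map $(A(X),\tau)\to(A(X),\tau_{\mathrm{free}})$'', hence $\tau_{\mathrm{free}}\subseteq\tau$ and $\tau$ is Hausdorff, you are silently identifying $A(X,\mathcal U)$ with the abstract group $A(X)$, i.e.\ assuming that $i$ is injective and that $A(X,\mathcal U)$ is algebraically free over $i(X)$. Theorem \ref{thm:Samuel}.4 only gives that $i(X)$ generates $A(X,\mathcal U)$ algebraically, not freely; without freeness the canonical homomorphism $(A(X),\tau)\to A(X,\mathcal U)$ could have a kernel and Hausdorffness of $\tau$ would not follow. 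The cleanest repair is to derive Hausdorffness of $\tau$ directly from your own extension step: for a nonzero $u=\sum k_ix_i$ with the $x_i$ distinct, choose a uniformly continuous $f\colon X\to\R$ with $f(x_1)=1$ and $f(x_i)=0$ for $i\ge 2$ (possible since a Hausdorff uniformity separates finitely many points by uniformly continuous pseudometrics); the induced $\tau$-continuous homomorphism $\Phi\colon A(X)\to\R$ satisfies $\Phi(u)=k_1\neq 0$, so some $\widetilde U(W)$ misses $u$. This simultaneously shows that $A(X,\mathcal U)$ is algebraically free over $X$, after which your final identification argument goes through verbatim.
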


Recall a  classical result concerning the (non)metrizability of free topological groups.

\begin{thm}\cite[Theorem 7.1.20]{AT}\label{thm:tkar}
If a Tychonoff space $X$ is non-discrete, then neither $F(X)$ nor
$A(X)$ are metrizable. 
\end{thm}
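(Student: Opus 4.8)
The plan is to prove the sharper statement that, for a non-discrete Tychonoff space $X$, neither $A(X)=A(X,\mathcal{U}_{max})$ nor $F(X)=F(X,\mathcal{U}_{max})$ is first countable at the identity. Since a Hausdorff topological group is metrizable if and only if it is first countable, and only the trivial implication \emph{metrizable $\Rightarrow$ first countable} is needed, this suffices. The essential input is the explicit neighbourhood base at $0$ for $A(X,\mathcal{U})$ furnished by the theorem just stated: every basic neighbourhood has the form $\widetilde{U}(W)=\bigcup_{n\in\N}(U(W_1)+\cdots+U(W_n))$ for a sequence $W=\{W_n\}_{n\in\N}$ of entourages of the diagonal, where $U(W_n)=\{\epsilon x-\epsilon y:(x,y)\in W_n,\ \epsilon=\pm 1\}$. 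For $F(X)$ one uses Pestov's analogous non-abelian description \cite{pes85}, or transfers the conclusion from $A(X)$ as indicated below.

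I first treat $A(X)$. Suppose, for contradiction, that $\{V_m\}_{m\in\N}$ is a countable base at $0$. Each $V_m$ contains a basic neighbourhood $\widetilde{U}(W^{(m)})$ for some sequence $W^{(m)}=\{W^{(m)}_n\}_{n}$ of entourages. Using non-discreteness, fix a non-isolated point $a\in X$. For every $m,n$ the set $\{x:(x,a)\in W^{(m)}_n\}$ is a neighbourhood of $a$, hence contains some $x^{(m)}_n\neq a$; choosing these distinct we obtain, for each length $L$, the element
\[
g^{(m)}_L=\sum_{j=1}^{L}\bigl(x^{(m)}_j-a\bigr)\in U(W^{(m)}_1)+\cdots+U(W^{(m)}_L)\subseteq \widetilde{U}(W^{(m)})\subseteq V_m ,
\]
whose reduced form $\sum_{j=1}^{L}x^{(m)}_j-L\,a$ has word length $2L$, unbounded in $L$. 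The aim is to build a single sequence $W=\{W_n\}_n$ of entourages such that the neighbourhood $\widetilde{U}(W)$ of $0$ contains no $V_m$, contradicting the base property. Concretely I would diagonalize, shrinking the $W_n$ so fast relative to the countably many prescribed families $W^{(m)}$ that for each $m$ a suitably long word $g^{(m)}_{L(m)}\in V_m$ fails to lie in $\widetilde{U}(W)$, giving $g^{(m)}_{L(m)}\in V_m\setminus\widetilde{U}(W)$ for all $m$.

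The crux, and the step I expect to be the main obstacle, is verifying that such long words genuinely escape $\widetilde{U}(W)$. An element of $\widetilde{U}(W)$ is a finite sum $\sum_{j=1}^{k}t_j$ with $t_j\in U(W_j)$, a concatenation of single jumps whose $j$-th jump is constrained by $W_j$. Since $A(X)$ is \emph{free} abelian on $X$, the word length $\ell$ is subadditive and each jump has $\ell(t_j)\le 2$; hence an element of length $2L$, such as $g^{(m)}_L$, can lie in $\widetilde{U}(W)$ only as a sum of $k\ge L$ jumps. Moreover freeness constrains which points those jumps may involve: reaching the fixed points $x^{(m)}_j$ from the $a$-cluster forces the jumps to bridge the \emph{fixed} scales $W^{(m)}_j$. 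By decreasing the $W_n$ fast enough along the diagonal enumeration that these scales cannot be bridged within the first $k$ jumps for the relevant $L=L(m)$, one secures $g^{(m)}_{L(m)}\notin\widetilde{U}(W)$. This word-length-plus-freeness bookkeeping is exactly where non-discreteness is indispensable: were $X$ discrete, $\Delta$ would itself be an entourage, every $U(W_n)$ could be reduced to $\{0\}$, all words would collapse, and no escape would occur (consistently with $A(X)$ being discrete, hence metrizable, in that case).

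Finally, $F(X)$ follows by the same scheme, replacing the abelian description by Pestov's neighbourhood base for the uniform free topological group \cite{pes85} and reduced abelian words by reduced group words; freeness of $F(X)$ again rigidifies the long words $(x^{(m)}_1 a^{-1})\cdots(x^{(m)}_L a^{-1})$ and the identical diagonalization applies. Alternatively, one may transfer the conclusion from $A(X)$: the canonical projection $F(X)\to F(X)/\overline{[F(X),F(X)]}\cong A(X)$ is an open homomorphism (canonical quotient projections are always open), so first countability of $F(X)$ would be inherited by its open continuous image $A(X)$; the already-established failure for $A(X)$ then rules out metrizability of $F(X)$ as well.
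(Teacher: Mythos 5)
The paper itself does not prove this statement; it quotes it verbatim from \cite[Theorem 7.1.20]{AT}, so there is no internal proof to compare you against. Your overall plan --- refute first countability of $A(X)$ at $0$ by diagonalizing against a putative countable base $\{V_m\}$ through the $\widetilde{U}(W)$-description, then pass to $F(X)$ via the open abelianization quotient $F(X)\to A(X)$ --- is a legitimate route, and the transfer step for $F(X)$ is correct. The construction of the witnesses $g^{(m)}_{L}\in \widetilde{U}(W^{(m)})\subseteq V_m$ is also fine (every neighborhood of a non-isolated point in a $T_1$ space is infinite, so distinct $x^{(m)}_j\neq a$ can be chosen).

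The gap is exactly where you locate it, and the mechanism you propose for closing it does not work. In a representation $g=\sum_{n=1}^{k}t_n$ with $t_n\in U(W_n)$ the individual jumps may route through intermediate points of $X$ that cancel in the reduced word; freeness of $A(X)$ controls only the net coefficients on the support of $g$, not where the jumps live, so no ``word-length-plus-freeness bookkeeping'' can rule out long chains reassembling $g^{(m)}_{L(m)}$. Moreover ``decreasing the $W_n$ fast'' has no intrinsic meaning for entourages of the fine uniformity of a general Tychonoff space. The missing ingredient is a quantitative test functional. By complete regularity choose, for each $m$, a continuous $f_m:X\to[0,1]$ with $f_m(a)=0$ and $f_m(x^{(m)}_j)=1$ for $j\le L(m)$, and let $\overline{f_m}:A(X)\to\R$ be its homomorphic extension, so that $\overline{f_m}\bigl(g^{(m)}_{L(m)}\bigr)=L(m)$. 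If one takes $W_n$ inside $\{(u,v):|f_m(u)-f_m(v)|<2^{-n-2}\ \text{for all}\ m\le n\}$ (an entourage of $\mathcal{U}_{max}$, being a finite intersection of preimages of metric entourages under continuous maps), then every element of $U(W_1)+\cdots+U(W_k)$ has $|\overline{f_m}|$-value less than $(m-1)+\tfrac12$: the first $m-1$ summands contribute at most $1$ each and the remaining ones at most $2^{-n-2}$. Taking $L(m)=2m$ then yields $g^{(m)}_{L(m)}\in V_m\setminus\widetilde{U}(W)$ and completes your diagonalization; note that $L(m)$ must grow with $m$ to beat the $m-1$ uncontrolled initial terms, so it cannot be chosen independently of the enumeration. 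Without some such separating function or continuous pseudometric the escape step, and hence the proof, is incomplete.
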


 Theorem \ref{thm:tkar} has a uniform modification.
In fact, we can mimic the proof of Theorem \ref{thm:tkar} to obtain
the following:

\begin{thm} \label{non-met}
Let ${\mathcal U}$ be  a non-discrete uniformity on $X$ and
$$\Omega\in \{\mathbf{TGr}, \mathbf{Prec},\mathbf{SIN},
\mathbf{AbGr}\}.$$ Then $F_{\Omega}(X,{\mathcal U})$ is not
metrizable.
\end{thm}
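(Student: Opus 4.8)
The plan is to mimic the proof of Theorem \ref{thm:tkar} in the uniform setting, reducing the general case to the free abelian or free topological group via the natural quotient maps from the "Zoo" diagram in Remark \ref{r:diag}. Since $\mathcal{U}$ is non-discrete, there exists a point $x_0 \in X$ which is not isolated in the uniform topology $top(\mathcal{U})$; equivalently, no single entourage $\eps \in \mathcal{U}$ separates $x_0$ from all other points, so for every $\eps \in \mathcal{U}$ the set $\eps(x_0) \setminus \{x_0\}$ is nonempty. The key structural fact I would exploit is that metrizability of a topological group is equivalent to first countability, i.e.\ to the existence of a countable base at the identity $e$. So to prove $F_{\Omega}(X,\mathcal{U})$ is not metrizable it suffices to show that $N_e(F_{\Omega})$ has no countable base.

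First I would observe that for each of the four listed classes $\Omega$, the identity map on $X$ induces a canonical continuous surjective homomorphism onto (or is covered by) the free object with the richest possible topology; by the diagram and the universal property (Theorem \ref{thm:Samuel}), it is enough to handle the case $\Omega = \mathbf{AbGr}$, since a quotient of a metrizable group onto a Hausdorff group need not be metrizable, but here I would instead run the argument directly in each case using the explicit neighborhood base. The cleanest route is via $A(X,\mathcal{U})$: using the description of the neighborhood base at zero by the sets $\widetilde{U}(W) = \bigcup_{n \in \N}(U(W_1)+\cdots+U(W_n))$, I would argue by contradiction. Suppose a countable family $\{V_k\}_{k \in \N}$ were a base at $0$. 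Each $V_k$ contains some basic $\widetilde{U}(W^{(k)})$ determined by a sequence $W^{(k)} = \{W^{(k)}_n\}_n$ of entourages.

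The heart of the argument is a diagonalization. Using non-discreteness, for each $k$ I would select, inside the entourage $W^{(k)}_k$, a genuine pair $(x_k,y_k)$ with $x_k \neq y_k$ near the non-isolated point, producing elements $x_k - y_k \in U(W^{(k)}_k)$. I would then construct a single sequence $W = \{W_n\}$ of entourages so that the corresponding basic neighborhood $\widetilde{U}(W)$ fails to contain any $V_k$: the point is that the length-$n$ sums in $\widetilde{U}(W)$ constrain the "word length" of admissible elements, and by choosing the $W_n$ to shrink fast enough one can exhibit, for each $k$, an element of $V_k \supseteq \widetilde{U}(W^{(k)})$ (namely a suitable sum $\sum (x_i - y_i)$ of large word length formed from the $W^{(k)}_i$) that escapes $\widetilde{U}(W)$ because its reduced length in $A(X)$ exceeds what $\widetilde{U}(W)$ permits. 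This is precisely the mechanism in the proof of Theorem \ref{thm:tkar}, where non-discreteness guarantees an infinite supply of nontrivial short elements whose sums have unbounded length.

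The main obstacle I expect is the word-length bookkeeping: one must verify that an element of $A(X)$ expressible as a sum of $n$ generators $x_i - y_i$ with $(x_i,y_i) \in W_i$ is \emph{not} reducible to something lying in a shorter sum $U(W_1)+\cdots+U(W_m)$ with $m < n$, which requires that the chosen pairs be in "general position" so that no cancellation shortens the reduced form. Controlling this in a uniform space — rather than a topological space, where one has more freedom to separate points — is the delicate point; here I would lean on the Hausdorffness of $\mathcal{U}$ (so $\cap\{\eps : \eps \in \mathcal{U}\} = \bigtriangleup$) to keep the selected pairs distinct and to prevent degenerate cancellations. For the non-abelian, precompact, and $\mathbf{SIN}$ cases, I would run the analogous argument with Pestov's description of $F(X,\mathcal{U})$ (Remark \ref{rem:pests}) in place of the abelian one, or alternatively transport non-metrizability across the surjective homomorphisms $F_{\sna} \twoheadrightarrow F^b_{\sna} \twoheadrightarrow A_{\sna}$ by noting that a first-countable group maps onto first-countable quotients, contradicting the failure already established for the abelian object.
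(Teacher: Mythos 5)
Your overall plan --- diagonalize against a putative countable base at $0$ using the $\widetilde{U}(W)$ description of $N_0(A(X,{\mathcal U}))$, then transfer to the other classes --- is indeed what the paper intends by ``mimic the proof of Theorem \ref{thm:tkar}'', and your reduction of the $\mathbf{TGr}$ and $\mathbf{SIN}$ cases to the abelian one is sound (the abelianization $F_{\Omega}(X,{\mathcal U})/\overline{[F_{\Omega},F_{\Omega}]}$ with the quotient topology satisfies the universal property of $A(X,{\mathcal U})$, and open quotients of metrizable groups are metrizable). But three steps are genuinely broken. First, your opening deduction is false: a non-discrete uniformity need not have a non-isolated point in $top({\mathcal U})$. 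Take $X=\N$ with $d(n,m)=|1/n-1/m|$: the induced topology is discrete, yet $\Delta\notin{\mathcal U}_d$. Non-discreteness of ${\mathcal U}$ only gives, for each $\eps\in{\mathcal U}$, \emph{some} off-diagonal pair in $\eps$, and these pairs may escape to infinity as $\eps$ shrinks; there need be no single $x_0$ with $\eps(x_0)\setminus\{x_0\}\neq\emptyset$ for all $\eps$. Since the classical proof of Theorem \ref{thm:tkar} is anchored at a fixed non-isolated point, this is precisely where the uniform version requires a new idea rather than a transcription, and your sketch does not supply it.

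Second, the stated escape mechanism cannot work: $\widetilde{U}(W)=\bigcup_{n\in\N}(U(W_1)+\cdots+U(W_n))$ is a union over \emph{all} $n$, so it contains elements of arbitrarily large reduced length, and no element can ``escape because its reduced length exceeds what $\widetilde{U}(W)$ permits''. To exclude a word $u=\sum_{i=1}^N(x_i-y_i)$ from $\widetilde{U}(W)$ you must show $u\notin U(W_1)+\cdots+U(W_m)$ for every $m\geq N$ as well, i.e.\ control which pairs the entourages $W_j$ contain and rule out representations with cancellation; that is the actual content of the bookkeeping, not a consequence of Hausdorffness. Third, the precompact case is not covered by anything you propose: since $\mathbf{Prec}\subset\mathbf{SIN}\subset\mathbf{TGr}$, the canonical homomorphisms run $F(X,{\mathcal U})\twoheadrightarrow F_{\mathbf{SIN}}(X,{\mathcal U})\twoheadrightarrow F_{\mathbf{Prec}}(X,{\mathcal U})$ (note also that your chain $F_{\sna}\to F^b_{\sna}\to A_{\sna}$ concerns the non-archimedean free groups, which are not the objects of this theorem), so non-metrizability of $A(X,{\mathcal U})$ says nothing about $F_{\mathbf{Prec}}(X,{\mathcal U})$, whose abelianization is the free \emph{precompact} abelian group rather than $A(X,{\mathcal U})$; and Pestov's description in Remark \ref{rem:pests} is of $N_e(F(X,{\mathcal U}))$, not of the precompact free topology, so ``the analogous argument'' has no description to run on. The precompact case needs a separate argument.
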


Contrast this result with Theorem \ref{t:metrizability},
where we show that for some natural subclasses of $\Omega =\mathbf{NA}$ the free group $F_{\Omega}(X,{\mathcal U})$ is metrizable
 whenever $(X,{\mathcal U})$ is metrizable.

\subsection{Free groups in some subclasses of $\mathbf{NA}$}

In Remark \ref{r:diag} we gave a list of some classes $\Omega$ and the corresponding free groups.
We keep the corresponding notations.

\begin{thm} \label{thm:fnag} Let 
 $(X,{\mathcal U})$ be a non-archimedean uniform space
and  
$$G\in \{ F_{\sna},F^b_{\sna}, A_{\sna},B_{\sna}\}.$$ Then:
 \ben
\item   The universal
morphism $i:(X,{\mathcal U}) \to G $ is a uniform embedding.
\item If  $G\in
\{F_{\sna},F^b_{\sna}\}$ then $G$ is 
algebraically free over $i(X).$ \\ So, if $G=A_{\sna}$ or $G=B_{\sna}$
then $G$ is algebraically isomorphic to $A(X)$, or $B(X)$, respectively.
\item  $i(X)$ is a closed subspace of $G$. 
\een
\end{thm}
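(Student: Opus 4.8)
The plan is to produce, once and for all, a separating family of continuous homomorphisms into discrete groups and then read off all three assertions from it. Since $(X,{\mathcal U})$ is non-archimedean, the equivalence relations $E \in Eq({\mathcal U})$ form a base of ${\mathcal U}$. For $G\in \{F_{\sna},F^b_{\sna},A_{\sna},B_{\sna}\}$ let $\Gamma$ be the matching abstract free-group functor ($F,F,A,B$ respectively) and give each $\Gamma(X/E)$ the \emph{discrete} topology; being discrete, $\Gamma(X/E)$ lies in the relevant class ($\mathbf{NA},\mathbf{NA_b},\mathbf{AbNA},\mathbf{BoolNA}$, using that discrete groups are non-archimedean and balanced). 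The quotient $q_E:X\to X/E$ is uniformly continuous into the discrete uniform space $X/E$ (the diagonal of $X/E$ pulls back to $E$), so the composite $\varphi_E:X\to \Gamma(X/E)$ with the generator inclusion is a uniform map into $\Omega$. By the universal property (Definition \ref{d:FreeGr}) each $\varphi_E$ extends to a continuous homomorphism $\Phi_E:G\to \Gamma(X/E)$ with $\Phi_E\circ i=\varphi_E$.

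For assertion (1), fix $E$. As $\Gamma(X/E)$ is discrete, $\ker\Phi_E$ is open, so $D_E:=\{(g,h): \Phi_E(g)=\Phi_E(h)\}=\{(g,h):g^{-1}h\in\ker\Phi_E\}$ is an entourage of $G$ (it belongs to the left, hence to the two-sided, uniformity). Since $(i\times i)^{-1}(D_E)=\{(x,y):\varphi_E(x)=\varphi_E(y)\}=E$, every basic entourage of ${\mathcal U}$ is induced from $G$; together with the uniform continuity of $i$ this shows $i$ is a uniform embedding, and injectivity follows because distinct points of $X$ are separated by some $E$.

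For assertion (2), each class is a variety, so by Theorem \ref{thm:Samuel}(4) the group $G$ is algebraically generated by $i(X)$; thus the canonical homomorphism from the abstract free object $\Gamma(X)$ onto $G$ is surjective. To see it is injective, take a nontrivial reduced element $w\in\Gamma(X)$: its support is a finite set of distinct points of $X$, which some $E\in Eq({\mathcal U})$ separates into distinct classes, and then $\Phi_E$ carries the image of $w$ to the corresponding nontrivial element of $\Gamma(X/E)$ (separation keeps a reduced word reduced, and an abelian or Boolean combination nonzero). Hence the image of $w$ in $G$ is nontrivial, the surjection is an isomorphism, and this gives algebraic freeness for $F_{\sna},F^b_{\sna}$ and, via the same maps into $A(X/E)$ and $B(X/E)$, the isomorphisms $A_{\sna}\cong A(X)$ and $B_{\sna}\cong B(X)$.

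For assertion (3), identify $G$ with the abstract free object on $i(X)$ (by (2)) and let $W\subset\Gamma(X/E)$ be the image of $X/E$, i.e. the ``positive length-one'' generators (singletons in the Boolean case). Since $\Gamma(X/E)$ is discrete, $W$ is clopen, so $\Phi_E^{-1}(W)$ is clopen in $G$, and $i(X)\subseteq\Phi_E^{-1}(W)$. The plan is to verify $i(X)=\bigcap_E \Phi_E^{-1}(W)$: any element of $G$ which is the identity, an inverse generator, a word of length $\ge 2$, or (in the abelian case) carries a coefficient $\neq 1$ or at least two distinct generators, is pushed outside $W$ by an $E$ separating its support. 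As an intersection of clopen sets is closed, $i(X)$ is closed. The main obstacle is precisely this final bookkeeping: checking across all four varieties that no element other than the genuine generators survives in every $\Phi_E^{-1}(W)$, which is where one must be careful about what ``length one'' means in each case.
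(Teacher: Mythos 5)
Your proposal is correct, and all three assertions go through. For parts (2) and (3) your route is essentially the paper's: both arguments push a reduced word (or a non-generator element) through the induced homomorphisms $\Phi_E\colon G\to \Gamma(X/E)$ onto discrete free objects over the quotients $X/E$, choose $E\in Eq(\mathcal U)$ separating the (finite) support, and use algebraic freeness of $\Gamma(X/E)$ plus discreteness to detect nontriviality, resp.\ to exhibit a clopen set around the offending element missing $i(X)$; the paper merely isolates the single-letter case $w=x^t$ via the constant map to $\Z$, which your ``separate the support'' formulation absorbs. Part (1) is where you genuinely diverge. The paper constructs an explicit Hausdorff non-archimedean group topology on $B(X)$ with local base the subgroups $\langle\varepsilon\rangle$ generated by $\{x+y:(x,y)\in\varepsilon\}$, and proves by a hands-on minimal-representation/chain-reduction argument that the trace of $\langle\varepsilon\rangle$ on $X\times X$ is exactly $\varepsilon$ (this combinatorial lemma is the real work, and it also feeds later results such as Theorem \ref{thm:desna}); it then reduces the other three groups to the Boolean one. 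You instead observe that $\ker\Phi_E$ is an open subgroup of $G$, so $D_E=\{(g,h):g^{-1}h\in\ker\Phi_E\}$ is an entourage of $(G,\mathcal U_{l\vee r})$ whose pullback under $i\times i$ is precisely $E$ (injectivity of the generator inclusion $X/E\hookrightarrow\Gamma(X/E)$ is what makes the pullback equal to $E$ rather than merely contain it); since the $E$'s form a base of $\mathcal U$, this gives injectivity of $i$ and uniform continuity of its inverse in one stroke. Your argument is shorter and avoids the combinatorics entirely, at the cost of not producing the explicit description of a neighborhood base of $B_{\sna}$ that the paper's construction yields as a by-product.
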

\begin{proof}
 $(1):$
It suffices
to prove that the universal morphism $i:X\to B_{\sna}$ is a uniform embedding.
We show the existence of a Hausdorff $\mathbf{NA}$ group topology $\tau$
on the free Boolean group $B(X)$,
and a uniform embedding $\iota:(X,{\mathcal U})\to (B(X),\tau),$
 that clearly will imply that $i:(X,{\mathcal U}) \to B_{\sna}$
is a uniform embedding.

Consider the natural set embedding $$\iota:
X \hookrightarrow B(X), \  \iota(x)=\{x\}.$$ We  identify $x \in X$
with $\iota(x)=\{x\} \in B(X).$ Let $\mathcal{B}:=\{<\eps>\}_{\eps
\in Eq({\mathcal U})}$, where  $Eq({\mathcal U})$ is the set of
equivalence relations from ${\mathcal U}$ and $<\eps>$ is the
subgroup of $B(X)$ algebraically generated by the set
$$\{x+y \in B(X): (x,y) \in \eps\}.$$ Now, $\mathcal{B}$ is a filter base on $B(X)$
and  $\forall b\in B(X) \ \forall \eps\in Eq(\mathcal U)$ we have
$$<\eps>+<\eps>=-<\eps>=b+<\eps>+b.$$
 It follows that  there exists a
$\mathbf{NA}$ group topology $\tau$ for which $\mathcal{B}$ is a
local base at the identity. To prove that this topology is indeed
Hausdorff, we have to show that if $u\neq 0$ is of the form
$u=\sum_{i=1}^{2n} a_i$ where $n\in \N$ and $a_i\in X \ \forall
1\leq i\leq 2n,$ then there exists $\eps\in Eq(\mathcal U)$ such
that $u\notin <\eps>.$ Since $(X,{\mathcal U})$ is a Hausdorff
uniform space there exists $\eps\in {\mathcal U}$ such that
$(a_i,a_j)\notin \eps $ for every $i\neq j.$ Assuming the contrary,
let $u\in <\eps>.$ Then there exists a minimal $m\in \N$ such that
$u=\sum_{i=1}^m(x_i+y_i)$ where $(x_i,y_i)\in \eps \ \forall 1\leq
i\leq m.$ Without loss of generality we may assume that there exists
$1\leq i_0\leq m$ such that $a_1=x_{i_0}.$ Note that $y_{i_0}\neq
a_j$ for every $1\leq j\leq 2n,$ since otherwise we  obtain a
contradiction to the minimality of $m$ or to the definition of
$\eps.$ Since $B(X)$ is the free Boolean group over $X$ and $\eps$
is symmetric we can assume without loss of generality that there
exists $r\neq i$ such that $y_{i_0}=x_r.$ It follows that
$(x_{i_0}+y_{i_0})+(x_r+y_r)=x_{i_0}+r.$ Since $\eps$ is transitive
we also have $(x_{i_0},y_r)\in \eps$ and we obtain a
contradiction to the minimality of $m.$ Therefore, $\tau$ is Hausdorff.\\
We show that $ \iota:(X,{\mathcal U})\to (B(X),\tau)$ is uniformly
continuous. To see this observe that if $(x,y)\in \eps$ then $x+y\in
<\eps>.$ Finally, assume that $(x,y)\in X\times X$ such that $x+y\in
<\eps>$ where $\eps$ is an equivalence relation. We show that
$(x,y)\in \eps$ and
 conclude that $ \iota:(X,{\mathcal U})\to
(B(X),\tau)$ is a uniform embedding. Since $x+y\in <\eps>$ there
exists a natural number $n$ such that $x+y=\sum_{i=1}^n(a_i+b_i),$
where $(a_i,b_i)\in \eps \ \forall 1\leq i \leq n.$ Moreover, $n$
may be chosen to be minimal. By the definition of $B(X)$ and the
fact that $\eps$ is symmetric we may assume without loss of
generality that there exists $1\leq i_0\leq n$ such that
$a_{i_0}=x.$  The case  $b_{i_0}=y$ is trivial. So we can
assume that $b_{i_0}\neq y.$  Since $B(X)$ is  the free Boolean
group over $X$ there exists $i_1\neq i_0$ such that either
$b_{i_0}=a_{i_1}$ or $b_{i_0}=b_{i_1}.$ In the former case we have
$(a_{i_0}+b_{i_0})+(a_{i_1}+b_{i_1})=a_{i_0}+b_{i_1}$ and
$(a_{i_0},b_{i_1})\in \eps,$ since $\eps$ is transitive, which
contradicts  the minimality of $n.$  The latter case
yields  a similar contradiction.\\
$(2):$ We  can now identify $X$ with $i(X).$  Denote by
$X/\varepsilon$ the quotient set equipped with the discrete
uniformity. The function $f_{\varepsilon}:X\to X/\varepsilon$ is the
uniformly continuous map which maps every $x\in X$ to the
equivalence class $[x]_{\varepsilon}.$ We first deal with the case
$G=F_{\sna}.$ We reserve the notation
$f_{\varepsilon}$ also for the homomorphic extension from
$F_{\sna}$ to  the (discrete) group
$F(X/\varepsilon).$ This allows us to define
$[w]_{\varepsilon}:=f_{\varepsilon}(w)$ for every $w\in
F_{\sna}.$ Let $w=x_1^{t_1}\cdots x_k^{t_k}\in
F_{\sna}$ where $t_i\in \Bbb Z\setminus \{0\}$ for
every $1\leq i\leq k$   and $x_i\neq x_{i+1}$ for every $ 1\leq
i\leq k-1.$ If $w$ is of the form $x^t,$  one can consider the
extension $\overline{f}: F_{\sna} \to \Z$ of the
constant function $f:(X,{\mathcal U}) \to \Z, \ f\equiv 1.$ We have
$$\overline{f}(w)=t\neq 0.$$ Otherwise, assume that $w$ is not of the form
$x^t.$ Since $(X,{\mathcal U})$ is a non-archimedean Hausdorff space
there exists an equivalence relation $\varepsilon\in {\mathcal U} $
such that
 $$(x_i,x_{i+1})\notin \varepsilon \
\forall i\in \{1,2,\ldots, k-1\}.$$
Since $F(X/\varepsilon)$  is algebraically free it follows that
$f_{\varepsilon}(w)\neq e_{F(X/\varepsilon)}.$  The groups
 $F(X/\varepsilon)$  and $\Z$ (being discrete groups) are both non-archimedean
 Hausdorff.
So we can conclude that
$F_{\sna}$ is algebraically free over $X$. \\For the case
$G=F^b_{\sna}$ use the fact that $\Z$ and $F(X/\varepsilon)$
are also balanced.\\ For the case $G=A_{\sna}$ one may use the
fact that $\Z$ is also abelian and replace $F(X/\varepsilon)$ with
$A(X/\varepsilon)$. Up to minor changes the proof is
similar to the proof of the case $G=F^b_{\sna}$.
For the Boolean case replace $\Z$ with $\Z_2$ and $F(X/\varepsilon)$  with
$B(X/\varepsilon).$
\\

$(3):$ In case $G=F_{\sna},$ let $w\in
F_{\sna} \setminus X.$ Assume first that $w$ is
either the identity element of $F_{\sna}$ or has the
form $x^{-1}$ where $x\in X.$  Then
$$\overline{f}(w)\neq \overline{f}(y)=1 \ \forall y\in X$$  where
$\overline{f}:F_{\sna}\to \Z$ is the extension of the
constant function $f:(X,{\mathcal U}) \to \Z, \ f\equiv 1.$ Since $\Z$ is
discrete the set  $O:=\{z\in F_{\sna} |\ \overline{f}(z)\neq
1\}$ is clearly an open subset of $F_{\sna}$ and we have
$w\in O\subseteq X^{C}.$
 Let $k>1$ and $w=x_1^{t_1}\cdots x_k^{t_k} $ where
$t_i\in \Bbb Z\setminus \{0\}$ for every $1\leq i\leq k$   and
$x_i\neq x_{i+1}$ for every $ 1\leq i\leq k-1.$ Then there exists an
equivalence relation  $\varepsilon\in {\mathcal U}$ such that
 $$(x_i,x_{i+1})\notin \varepsilon \
\forall i\in \{1,2,\ldots, k-1\}.$$ Since $F(X/\varepsilon)$  is
algebraically free it follows that $[w]_{\varepsilon}\neq
[x]_{\varepsilon} \ \forall x\in X.$ Since $F(X/\varepsilon)$ is
discrete the set $U:=\{z\in F_{\sna} |\
f_{\varepsilon}(z)=[w]_{\varepsilon}\}$ is an open subset of
$F_{\sna}$ and we also have $w\in U\subseteq X^{C}.$ This
implies that $(X,{\mathcal U})$ is a closed subspace of $F_{\sna}.$ \\
For $G\neq F_{\sna}$ we may use the same modifications
appearing in the proof of $(2).$
\end{proof}

\begin{remark}
It is clear that if the universal morphism $i:(X,{\mathcal U})\to G$
is a uniform embedding, where $G$ is non-archimedean, then
$(X,{\mathcal U})$ is non-archimedean.
\end{remark}

\begin{lemma}\label{lem:finprec}
Let ${\mathcal U}$ be the discrete uniformity on a finite set $X.$
Then 
 $F^{\scriptscriptstyle Prec}_{\sna}$ algebraically is the free
group $F(X)$ over $X$.
\end{lemma}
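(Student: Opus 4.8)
The plan is to exhibit the canonical homomorphism from the abstract free group $F(X)$ onto $F^{\scriptscriptstyle Prec}_{\sna}$ and to prove it is bijective, the injectivity coming from the residual finiteness of finitely generated free groups.

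First I would record that the class $\Omega = \mathbf{NA} \cap \mathbf{Prec}$ is a variety (as noted after Definition \ref{def:semiv}), hence in particular an $SC$-variety. By Theorem \ref{thm:Samuel}(4) the group $F^{\scriptscriptstyle Prec}_{\sna}$ is then \emph{algebraically} generated by $i(X)$. Consequently the homomorphic extension $q \colon F(X) \to F^{\scriptscriptstyle Prec}_{\sna}$ of the set map $x \mapsto i(x)$ (which exists by the algebraic freeness of $F(X)$) is surjective. It remains to prove that $q$ is injective, i.e., that $F^{\scriptscriptstyle Prec}_{\sna}$ is algebraically free over $i(X)$.

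Next, fix a nonidentity reduced word $w \in F(X)$. Since $X$ is finite, $F(X)$ is a finitely generated free group and therefore residually finite; thus there exist a finite group $K$ and a homomorphism $\psi \colon F(X) \to K$ with $\psi(w) \neq e$. Regarding $K$ as a discrete topological group, we have $K \in \mathbf{NA} \cap \mathbf{Prec}$, and the restriction $\varphi := \psi|_X \colon (X,{\mathcal U}) \to K$ is uniformly continuous because ${\mathcal U}$ is the discrete uniformity. By the universal property of $F^{\scriptscriptstyle Prec}_{\sna}$ there is a continuous homomorphism $\Phi \colon F^{\scriptscriptstyle Prec}_{\sna} \to K$ with $\Phi \circ i = \varphi$.

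Finally, the homomorphisms $\Phi \circ q$ and $\psi$ from $F(X)$ to $K$ agree on the free generators, since both send $x$ to $\varphi(x) = \psi(x)$; hence $\Phi \circ q = \psi$. Therefore $\Phi(q(w)) = \psi(w) \neq e$, which forces $q(w) \neq e$. As $w$ was an arbitrary nonidentity element, $q$ is injective, and being also surjective it is an isomorphism of abstract groups, so $F^{\scriptscriptstyle Prec}_{\sna}$ is algebraically the free group $F(X)$. The only nonroutine ingredient is the residual finiteness of finitely generated free groups, and this is precisely where the hypothesis that $X$ is finite is used; indeed, for an infinite discrete $X$ the uniform space $(X,{\mathcal U})$ could not even embed into a precompact group, so finiteness is genuinely needed rather than a mere convenience.
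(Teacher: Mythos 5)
Your proof is correct and rests on the same key ingredient as the paper's: residual finiteness of free groups. The paper packages this slightly differently --- it endows the abstract group $F(X)$ with the profinite topology (generated by all finite-index normal subgroups) and observes that this is a Hausdorff non-archimedean precompact group topology, which by the universal property forces the canonical surjection $F(X)\to F^{\scriptscriptstyle Prec}_{\sna}$ to be injective --- whereas you separate each nonidentity word individually by a map to a finite discrete group; these are two phrasings of one argument. One correction to your closing remark: residual finiteness holds for \emph{all} free groups, not just finitely generated ones, and your argument for injectivity and surjectivity goes through verbatim for an infinite set $X$ with the discrete uniformity (the existence and algebraic generation results, Theorem \ref{thm:Samuel}, need no precompactness of $(X,\mathcal U)$); what fails for infinite discrete $X$ is only that $i$ cannot be a \emph{uniform embedding}, which is a statement about the topology, not about the algebraic structure asserted in the lemma. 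So finiteness is not actually where the proof uses its hypothesis --- the conclusion holds without it.
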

\begin{proof}
It suffices to find a Hausdorff non-archimedean precompact group
topology $\tau$ on the abstract free group $F(X)$.
  Consider the group topology
$\tau$ generated by the filter base $\{N\vartriangleleft F(X): \
[F(X):N] < \infty\}.$  Clearly, $\tau$ is a non-archimedean
precompact group topology on $F(X).$ To see that $\tau$ is Hausdorff
recall that every free group is residually finite, that is, the
intersection of all normal subgroups of finite index is trivial.
\end{proof}

\begin{thm} \label{thm:fnprec} Let $(X,{\mathcal U})$ be a non-archimedean precompact uniform space and 
$$G\in \{F^{\scriptscriptstyle Prec}_{\sna},F_{\scriptscriptstyle Pro}\}.$$ Then:
 \ben
\item   The universal
morphism $i:(X,{\mathcal U}) \to G $ is a uniform embedding.
\item $F^{\scriptscriptstyle Prec}_{\sna}$ is 
algebraically free over $i(X)$.
\item  $i(X)$ is a closed subspace of $F^{\scriptscriptstyle Prec}_{\sna}$. 
\een
\end{thm}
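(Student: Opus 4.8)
The plan is to follow the scheme of Theorem \ref{thm:fnag}, with one systematic change: everywhere that proof mapped into the discrete groups $\Z$ and $F(X/\varepsilon)$, I would instead map into \emph{finite} groups, since $\Z$ is not precompact. The enabling observation is that precompactness of a non-archimedean uniformity means precisely that every $\varepsilon\in Eq(\mathcal{U})$ has finitely many classes, so each quotient $X/\varepsilon$ is finite and each $F(X/\varepsilon)$ is a free group of finite rank, to which Lemma \ref{lem:finprec} applies.

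First I would reduce everything to producing a topology on the abstract free group $F(X)$. Concretely, I claim there is a Hausdorff non-archimedean precompact group topology $\tau$ on $F(X)$ for which the canonical inclusion of generators $\iota:(X,\mathcal{U})\to (F(X),\tau)$ is a uniform embedding. Granting this, the universal property of $F^{\scriptscriptstyle Prec}_{\sna}$ yields a continuous homomorphism $\Phi: F^{\scriptscriptstyle Prec}_{\sna}\to (F(X),\tau)$ with $\Phi\circ i=\iota$; since $\iota$ is a uniform embedding and $i$ is uniformly continuous, pulling back entourages through $\Phi$ shows that the uniformity induced on $X$ via $i$ coincides with $\mathcal{U}$, proving (1). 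For (2), since $\mathbf{NA}\cap\mathbf{Prec}$ is a variety, Theorem \ref{thm:Samuel}(4) gives that $F^{\scriptscriptstyle Prec}_{\sna}$ is algebraically generated by $i(X)$; letting $\psi: F(X)\to F^{\scriptscriptstyle Prec}_{\sna}$ be the abstract homomorphism extending $i$, the identity $\Phi\circ\psi=\mathrm{id}_{F(X)}$ (both sides fix the free generators) makes $\psi$ injective, while generation makes it surjective, so $\psi$ is an algebraic isomorphism and $i(X)$ freely generates $F^{\scriptscriptstyle Prec}_{\sna}$.

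To construct $\tau$, I would take the initial group topology on $F(X)$ induced by all homomorphisms $h:F(X)\to P$ into finite groups $P$ whose restriction $h|_X$ is uniformly continuous; each such $h$ factors through some $f_\varepsilon: F(X)\to F(X/\varepsilon)$. By construction this topology embeds $F(X)$ into a product of finite groups, hence is precompact and non-archimedean. Hausdorffness and the uniform-embedding property of $\iota$ are then proved exactly as in Theorem \ref{thm:fnag}: for $(x,y)\notin\varepsilon$ the map $f_\varepsilon$ already separates the images in the finite set $X/\varepsilon$, and residual finiteness of the finite-rank free group $F(X/\varepsilon)$ (via Lemma \ref{lem:finprec}) produces a finite quotient realizing the separation. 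For (3) I would separate any $w\in F(X)\setminus X$ from $X$ by a continuous homomorphism into a finite group: if $w=e$ or $w=x^t$ with $t\neq 1$, the homomorphism $F(X)\to \Z/m\Z$ sending every generator to $1$ works for $m$ large enough that $\sum_i t_i\not\equiv 1\pmod m$; if $w$ has syllable length $\geq 2$, choose $\varepsilon$ keeping consecutive letters distinct, so that $f_\varepsilon(w)$ is a reduced word of length $\geq 2$ in $F(X/\varepsilon)$ and hence differs from every generator $[x]_\varepsilon$, and then pass to a finite quotient separating $f_\varepsilon(w)$ from the finite set $\{[x]_\varepsilon\}$.

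Finally, for $G=F_{\scriptscriptstyle Pro}$ only assertion (1) is asserted, and it comes for free: Lemma \ref{l:compl} gives $F_{\scriptscriptstyle Pro}=\widehat{F^{\scriptscriptstyle Prec}_{\sna}}$, the completion map $F^{\scriptscriptstyle Prec}_{\sna}\hookrightarrow F_{\scriptscriptstyle Pro}$ is a uniform embedding onto a dense subgroup, and composing it with the uniform embedding $i:X\to F^{\scriptscriptstyle Prec}_{\sna}$ from (1) yields the uniform embedding $X\to F_{\scriptscriptstyle Pro}$. The main obstacle I anticipate is the bookkeeping for $\tau$: one must check that the admissible finite quotients are simultaneously numerous enough to separate every word (residual finiteness) and faithful enough to recover the original entourages, which is exactly where the finiteness of $X/\varepsilon$ and Lemma \ref{lem:finprec} enter; the substitution of $\Z/m\Z$ for $\Z$ in part (3) is the one genuinely new twist relative to Theorem \ref{thm:fnag}.
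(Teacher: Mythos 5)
Your proposal is correct, and for assertions (2) and (3) it is essentially the paper's own argument: the paper likewise reruns the proof of Theorem \ref{thm:fnag}, replacing $F_{\sna}(X/\eps)$ by $F^{\scriptscriptstyle Prec}_{\sna}(X/\eps)$ (algebraically free by Lemma \ref{lem:finprec}, i.e.\ by residual finiteness of finite-rank free groups) and the discrete group $\Z$ by $\Z$ with its profinite topology --- which is just the inverse-limit packaging of your family of finite cyclic quotients $\Z/m\Z$. The only genuine divergence is in assertion (1). You build an explicit Hausdorff precompact $\mathbf{NA}$ topology $\tau$ on $F(X)$ from admissible homomorphisms into finite groups and verify the uniform-embedding property by hand (which requires the small but real observation that, $X/\eps$ being finite, a \emph{single} finite quotient of $F(X/\eps)$ separates all classes simultaneously, so that the relevant kernel is genuinely $\tau$-open). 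The paper instead disposes of (1) in one line by passing to the compact zero-dimensional completion $(\widehat X,\widehat{\mathcal U})$ and uniformly embedding it into the profinite group $\Z_2^{w(\widehat X)}$; that route is shorter and covers $F_{\scriptscriptstyle Pro}$ directly, whereas your route has the advantage that the same topology $\tau$ is then reused verbatim for (2) and (3), and your derivation of (1) for $F_{\scriptscriptstyle Pro}$ from Lemma \ref{l:compl} is a clean substitute. Your deduction of algebraic freeness via $\Phi\circ\psi=\mathrm{id}_{F(X)}$ together with Theorem \ref{thm:Samuel}(4) is also a slightly tidier formulation than the paper's, but rests on the same facts.
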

\begin{proof}
$(1):$ $(X,{\mathcal U})$ is a uniform subspace of its compact zero
dimensional completion $(\widehat X,\widehat{\mathcal U})$. Consider the
compact group $\Z_2^{w(\widehat X)}$ where $w(\widehat X)$ is the
topological weight of $\widehat X.$ Then it is clear that $(\widehat
X,\widehat{\mathcal U})$ is uniformly embedded in $\Z_2^{w(\widehat X)}.$ Now,
since $\Z_2^{w(\widehat X)}$ is a profinite group then each
of the  universal morphisms  is a uniform embedding. \\
$(2):$ We use similar ideas to those appearing  in the proof of
Theorem \ref{thm:fnag}.2. This time due to the precompactness
assumption the set $X/ \eps$ is finite. By Lemma \ref{lem:finprec},
$F^{\scriptscriptstyle Prec}_{\sna}(X/ \eps)$ is algebraically free
over the set $X/ \eps$. Thus we may replace  $F_{\sna}(X/ \eps)$
with $F^{\scriptscriptstyle Prec}_{\sna}(X/ \eps),$ and also the
discrete topology on $\Z$ with  its Hausdorff topology generated by
all of its finite-index subgroups, to conclude  that
$F^{\scriptscriptstyle Prec}_{\sna}$ is
algebraically free over
$i(X)$.\\
$(3):$  Very similar to the proof of Theorem \ref{thm:fnag}.3. Just
observe that  $$O:=\{z\in F^{\scriptscriptstyle Prec}_{\sna} |\ \overline{f}(z)\neq 1\}$$ is an open subset of
$F^{\scriptscriptstyle Prec}_{\sna},$ since the group
topology on $\Z$ which we consider this time remains Hausdorff.
Moreover, the set
$$U:=\{z\in F^{\scriptscriptstyle Prec}_{\sna} |\
f_{\varepsilon}(z)\notin \{[x]_{\varepsilon}: x\in X\}\}$$ is also
an open subset of $F^{\scriptscriptstyle Prec}_{\sna},$ since $F^{\scriptscriptstyle Prec}_{\sna}(X/ \eps)$ is
Hausdorff and $\{[x]_{\varepsilon}: x\in X\}$ is finite.
\end{proof}

\section{Final non-archimedean group topologies}
\label{s:final} 

In this section 
 the
topological groups are not  necessarily Hausdorff. Recall that the
description of $F(X,{\mathcal U}),$ given by Pestov in \cite{pes85}
(see also Remark \ref{rem:pests}.1 below), was based on
final group topologies, which were studied by Dierolf and Roelcke
\cite[Chapter 4]{DR81}. Here we study final non-archimedean group topologies.

In the sequel we present a non-archimedean  modification of final
group topologies. The general structure
of final non-archimedean group topologies is then used to find
descriptions of the topologies for the free $\mathbf{NA}$ groups from Remark \ref{r:diag}.

We also provide a new description of the topology
 of $F^b(X,{\mathcal U}),$ the uniform free balanced group of a
 uniform space $(X,{\mathcal U}).$


\begin{defi}\label{def:finom}
Let $P$ be a group, 
$\alpha$ a  filter base on $P$  and $\Omega \subset \mathbf{TGr}$ an 
$SC$-variety.  Assume that there exists a  group topology $\tau$ on $P$
such that: \ben
\item  $(P,\tau)\in \Omega,$ and
\item the filter $\a$ converges to $e$ (notation: $\alpha \to e$) in  $(P,\tau).$
\een  Then among all group topologies on $P$ satisfying properties
$(1)$ and $(2)$ there is a finest one. We call it the \emph{$\Omega$-group
topology generated by $\alpha$} and denote it by
$\langle \alpha\rangle_{\Omega}.$
\end{defi}

\begin{defi}  \cite[Chapter 4]{DR81}
 If $P$ is a group and $(B_n)_{n\in \N}$ a sequence of subsets of
$P,$ let
$$[(B_n)]:=\bigcup_{n\in \N}
\bigcup_{\pi\in S_n} B_{\pi(1)} B_{\pi(2)}\cdots  B_{\pi(n)}.$$
\end{defi}

\begin{remark} \label{r:subgroup}
Note that if $(B_n)_{n\in \N}$ is a constant sequence such that
$$B_1=B_2=\cdots=B_n=\cdots =B$$   then $[(B_n)]=\bigcup_{n\in
\N}B^{n}.$ In this case we write $[B]$ instead of $[(B_n)].$
It is easy to see that if $B=B^{-1}$ then $[B]$ is simply the subgroup generated by $B.$
\end{remark}
\begin{lemma} \label{lem:fna}
Let $P$ be a non-archimedean topological group and $\mathcal{L}$ a
base of $N_{e}(P).$ Then the set $\{[B]: B \in \mathcal{L}\}$ is
also a base of $N_{e}(P).$
\end{lemma}
\begin{proof}
For every $ B \in\mathcal{L}$  we have $[B]\in N_{e}(P)$ since
$B\subseteq [B].$ Let $V\in N_{e}(P).$ We have to show that there
exists $B \in\mathcal{L}$ such that $[B]\subseteq V.$ Since $G$ is
non-archimedean, there exists an open subgroup $H$ such that
$H\subseteq V$ and $H\in N_{e}(P).$ On the other hand, $\mathcal{L}$
is a base of $N_{e}(P)$ and therefore there exists $B\in \mathcal{L}$
such that $B\subseteq H.$ From the fact that $H$ is a subgroup we
conclude
that $$[B]=\bigcup_{n\in \N} B^n\subseteq H\subseteq V.$$
%
%
%
\end{proof}

\begin{lemma} (Compare with \cite[Remark 4.27]{DR81}) \label{l:dar}
Let $P$ be a  group, $\alpha $ a filter base on $P$ and $\Omega$ an
$SC$-variety. Then:\ben
\item $$\mathcal{L}:=\Big\{\bigcup_{p\in P}(pA_{p}p^{-1}\cup
pA^{-1}_{p}p^{-1}):\ A_{p}\in \alpha \ (p\in P) \Big \}$$
 is also a filter base on $P$   and if $\langle \alpha\rangle_{\Omega}$ exists then $\langle \alpha\rangle_{\Omega}= \langle \mathcal{L}
\rangle_{\Omega}.$

\item If, in addition, all topological groups belonging to $\Omega$ are balanced then  $$\mathcal{M}:=\big\{\bigcup_{p\in P}(pVp^{-1}\cup
pV^{-1}p^{-1}):V\in \alpha \big \}$$ is a  filter base on $P$ and if
$\langle \alpha\rangle_{\Omega}$ exists then $\langle
\alpha\rangle_{\Omega}= \langle \mathcal{M} \rangle_{\Omega}.$  \een
\end{lemma}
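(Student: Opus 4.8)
The plan is to verify the two assertions of Lemma~\ref{l:dar} in turn, in each case first checking that the displayed family is a filter base on $P$ and then establishing the equality of the generated $\Omega$-group topologies via the finest-topology characterization in Definition~\ref{def:finom}.

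For part (1), I would first show that $\mathcal{L}$ is a filter base. Each member of $\mathcal{L}$ is nonempty (it contains $A_e$ when $p$ ranges over $P$ and in particular the contribution from $p=e$), so it suffices to check the downward-directedness: given two members determined by choices $(A_p)_{p\in P}$ and $(A'_p)_{p\in P}$, use that $\alpha$ is a filter base to pick, for each $p$, some $A''_p\in\alpha$ with $A''_p\subseteq A_p\cap A'_p$; the member of $\mathcal{L}$ built from $(A''_p)$ is then contained in the intersection of the two given members, since conjugation and inversion are order-preserving for inclusion. For the topology equality, I would argue that a group topology $\tau$ has $\alpha\to e$ if and only if it has $\mathcal{L}\to e$. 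The key observation is that in any topological group, if $A_p\to e$ (equivalently $A_p\in\alpha$ and $\alpha\to e$), then for each fixed $p$ the conjugate $pA_pp^{-1}$ and the inverse $pA_p^{-1}p^{-1}$ also converge to $e$, because conjugation by $p$ and inversion are homeomorphisms fixing $e$. The subtlety is that $\mathcal{L}$ takes a \emph{union over all $p\in P$}, so convergence of $\mathcal{L}$ to $e$ is a genuinely stronger-looking condition; however, any member of $\mathcal{L}$ is built from a single selection $(A_p)_p$ and a neighborhood $U$ of $e$ must contain $pA_pp^{-1}\cup pA_p^{-1}p^{-1}$ for a suitable choice, and this is exactly the statement that $U$ is invariant enough — here I would lean on the fact that in the finest topology $\langle\alpha\rangle_\Omega$, the neighborhoods of $e$ can be taken to be subgroups (by Lemma~\ref{lem:fna}, since $(P,\langle\alpha\rangle_\Omega)\in\mathbf{NA}$ once $\Omega\subseteq\mathbf{NA}$), and subgroups are automatically closed under the conjugation-and-inversion operations up to passing to the normal core. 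Since $\langle\alpha\rangle_\Omega$ and $\langle\mathcal{L}\rangle_\Omega$ are each defined as the finest $\Omega$-topology making the respective filter converge to $e$, and the two convergence conditions are equivalent, the two finest topologies coincide.

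For part (2), the extra hypothesis is that every group in $\Omega$ is balanced, so the left and right uniformities coincide and neighborhoods of $e$ may be chosen invariant under all conjugations. Here $\mathcal{M}$ uses a \emph{single} $V\in\alpha$ rather than a selection $(A_p)_p$, and I would show $\mathcal{M}$ is a filter base by the same directedness argument, now simpler because only one set is chosen. For the equality $\langle\alpha\rangle_\Omega=\langle\mathcal{M}\rangle_\Omega$, the balancedness is what lets me collapse the selection $(A_p)_p$ down to a constant one: in a balanced group a neighborhood base at $e$ consists of conjugation-invariant sets, so $\bigcup_{p}pVp^{-1}$ is essentially the conjugation-invariant hull of $V$, and convergence of $\mathcal{M}$ to $e$ is equivalent to convergence of $\alpha$ to $e$ in any balanced $\Omega$-topology. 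Again invoking the defining maximality of the finest topology gives the identity.

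The main obstacle I anticipate is the direction showing that $\mathcal{L}\to e$ (respectively $\mathcal{M}\to e$) in $\langle\alpha\rangle_\Omega$, i.e.\ that passing from $\alpha$ to the larger conjugation-and-inversion-closed family does not strictly shrink the topology. The clean way around this is to exploit that $\langle\alpha\rangle_\Omega$ is itself a non-archimedean topology (as $\Omega\subseteq\mathbf{NA}$), so by Lemma~\ref{lem:fna} its neighborhoods of $e$ have a base of open subgroups; for an open subgroup $H$ with core $N=\bigcap_{p}pHp^{-1}$ one checks $N$ is again open (its complement is a union of cosets, or one uses that in the relevant cases $H$ can be taken normal), and then $N$ absorbs the whole union $\bigcup_p pA_pp^{-1}$ once each $A_p\subseteq N$. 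This reduces the union-over-$P$ difficulty to a single absorption statement and closes the argument.
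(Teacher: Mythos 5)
Your reduction of the lemma to the equivalence ``$\alpha\to e$ in $(P,\tau)$ if and only if $\mathcal{L}\to e$ in $(P,\tau)$, for every $\Omega$-topology $\tau$'' is exactly the paper's (one-line) argument, and your verification that $\mathcal{L}$ and $\mathcal{M}$ are filter bases is fine. The gap is in how you establish the nontrivial direction $\alpha\to e \Rightarrow \mathcal{L}\to e$. You treat the union over all $p\in P$ as an obstacle to be overcome by passing to an open subgroup $H$ and its normal core $N=\bigcap_{p}pHp^{-1}$, claiming $N$ is open. This fails: the normal core of an open subgroup need not be open in a non-balanced group (in $S_{\N}$ the stabilizer of a point is an open subgroup whose core is trivial), and ``the complement is a union of cosets'' does not give openness. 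Your fallback, invoking Lemma \ref{lem:fna} under the hypothesis $\Omega\subseteq\mathbf{NA}$, also restricts the lemma beyond its stated generality: it is formulated for an arbitrary $SC$-variety and is applied in the paper with $\Omega=\mathbf{SIN}$ (Lemma \ref{lem:banone}, Theorem \ref{thm:desbal}), which is not contained in $\mathbf{NA}$.

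The union over $p$ is in fact not an obstacle, because a member of $\mathcal{L}$ is determined by a selection $(A_p)_{p\in P}$ that may depend on $p$; this is precisely why $\mathcal{L}$ is defined with $A_p\in\alpha$ varying with $p$ rather than with a single set. Given $U\in N_e(P,\tau)$, pick a symmetric $V\in N_e(P,\tau)$ with $V\subseteq U$; for each $p$ the set $p^{-1}Vp$ is a symmetric neighborhood of $e$, so $\alpha\to e$ yields $A_p\in\alpha$ with $A_p\subseteq p^{-1}Vp$, whence $pA_pp^{-1}\cup pA_p^{-1}p^{-1}\subseteq V\subseteq U$ for every $p$ and the whole union lies in $U$. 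No subgroup structure, no non-archimedeanness and no cores are needed. For part (2) your use of balancedness (replace $V$ by a conjugation-invariant symmetric neighborhood, after which a single $A\in\alpha$ works for all $p$ simultaneously) is correct and is what the paper intends; with the first part repaired along these lines the proof matches the paper's.
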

\begin{proof} (1)  This follows from the fact that for every group
topology $\tau$ on $P$ such that $(P,\tau)\in \Omega$  the filter
base $\alpha$ converges to $e$ in $(P,\tau)$ if and only if
$\mathcal{L}$
converges to $e$ in $(P,\tau).$
Note that $\mathcal{L}$ satisfies the following
properties: \ben [(a)]
\item $ \forall A\in \mathcal{L}\ \ A=A^{-1},$
\item $ \forall A\in \mathcal{L}\ \forall p\in P  \  \exists B\in
\mathcal{L} \ \ pBp^{-1}\subseteq A.$ \een

(2) This follows from the fact that for every group topology $\tau$
on $P$ such that $(P,\tau)\in \Omega$  the filter base $\alpha$
converges to $e$ in $(P,\tau)$ if and only if $\mathcal{M}$
converges to $e$ in $(P,\tau).$
 Note that  $\mathcal{M}$ satisfies the following stronger
properties: \ben [(a*)]
\item $ \forall A\in \mathcal{M}\ A=A^{-1},$
\item $ \forall A\in \mathcal{M}\  \  \exists B\in
\mathcal{M} \  \forall p\in P  \ pBp^{-1}\subseteq A.$ \een

\end{proof}

\begin{lemma}\label{lem:fin}
Let $P$ be a group,  $\alpha$ a  filter base on $P$ and $\Omega$ an
$SC$-variety.  
\ben \item If $\Omega =\mathbf{NA}$ and $\alpha$ satisfies the
following 
 property: \ben[(a)]
\item $ \forall A\in \alpha \ \forall p\in P  \  \exists B\in
\alpha \ pBp^{-1}\subseteq A,$  \een
   then a base of $N_{e}(P,\langle
\alpha \rangle_{\Omega})$ is formed by the sets $[A],$  where $A\in
\alpha.$   \item If $$\Omega \in \{\mathbf{NA},\mathbf{NA_b},
\mathbf{AbNA}, \mathbf{BoolNA}\}$$
and $\alpha$ satisfies the stronger property
\ben [(a*)]
\item $\forall A\in \alpha \ \exists B\in \alpha \ \forall
p\in P \ \ pBp^{-1}\subseteq A,$ \een then the sets $[A],$ where
$A\in \alpha,$ constitute  a base of $N_{e}(P,\langle \alpha
\rangle_{\Omega}).$
\item If $\Omega =\mathbf{NA} \cap \mathbf{Prec}$ and $\alpha$ satisfies 
property  $(a^*)$ of $(2)$ then
$$\{N\lhd P| \ [P:N]<\infty \wedge \exists A\in \alpha \
[A]\subseteq N\}$$ is a local base at the identity element of
 $(P,\langle \alpha \rangle_{\Omega}).$
\item If $\Omega=\mathbf{SIN}$ and $\alpha$ satisfies the following
properties: \ben [(a*)]
\item $ \forall A\in \mathcal{M}\ A=A^{-1},$
\item $ \forall A\in \mathcal{M}\  \  \exists B\in
\mathcal{M} \  \forall p\in P  \ pBp^{-1}\subseteq A,$ \een
 then a base of $N_{e}(P,\langle \alpha
\rangle_{\Omega})$ is formed by the sets $[(A_n)],$ where $\forall
n\in \N \ A_n\in \alpha.$
 \een
\end{lemma}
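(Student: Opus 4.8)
The plan is to treat all four items by one two-step template. For each $\Omega$ I would write down an explicit filter base $\mathcal{N}$ on $P$, verify that $\mathcal{N}$ is a neighbourhood base at $e$ of a group topology $\tau_{\mathcal{N}}$ with $(P,\tau_{\mathcal{N}})\in\Omega$ and $\alpha\to e$ (\emph{validity}), and then show that every competing valid topology is coarser than $\tau_{\mathcal{N}}$ (\emph{maximality}); by Definition \ref{def:finom} this identifies $\tau_{\mathcal{N}}$ with $\langle\alpha\rangle_{\Omega}$ and simultaneously settles existence. Maximality is where the non-archimedean structure does the work in items (1)--(3): if $\tau'$ is any group topology with $(P,\tau')\in\Omega\subseteq\mathbf{NA}$ and $\alpha\to e$, then given $V\in N_e(P,\tau')$ there is an open subgroup $H\subseteq V$; since $\alpha\to e$ there is $A\in\alpha$ with $A\subseteq H$, and because $H$ is a subgroup $[A]=\bigcup_{n}A^{n}\subseteq H\subseteq V$. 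Hence every $\tau'$-neighbourhood of $e$ contains a member of $\mathcal{N}$, i.e. $\tau'\subseteq\tau_{\mathcal{N}}$. I stress that this step uses only $[A]\subseteq H$, never that $[A]$ is itself a subgroup.

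For item (1) the one remaining task is that $\mathcal{N}:=\{[A]\ :\ A\in\alpha\}$ is a base of an $\mathbf{NA}$ group topology. It is a filter base, since $A'\subseteq A$ forces $[A']\subseteq[A]$. To make the sets $[A]$ genuine open subgroups I would invoke Lemma \ref{l:dar}(1) to assume, without changing $\langle\alpha\rangle_{\Omega}$, that every $A\in\alpha$ is symmetric while still satisfying property $(a)$; for symmetric $A$ the set $[A]$ is exactly the subgroup generated by $A$ (Remark \ref{r:subgroup}). Then $e\in[A]$, $[A]\,[A]=[A]=[A]^{-1}$, so the multiplication and inversion axioms are automatic and each $[A]$ is an open subgroup. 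The conjugation axiom is precisely where $(a)$ enters: given $[A]$ and $p\in P$ pick $B\in\alpha$ with $pBp^{-1}\subseteq A$; then $p[B]p^{-1}=[pBp^{-1}]\subseteq[A]$. This closes item (1). For item (2), when $\Omega=\mathbf{NA}$ property $(a^{*})$ implies $(a)$ and (1) applies verbatim; for the balanced classes $\mathbf{NA_b},\mathbf{AbNA},\mathbf{BoolNA}$ I would instead use the base $\mathcal{M}$ of Lemma \ref{l:dar}(2), whose members $\bigcup_{p}(pVp^{-1}\cup pV^{-1}p^{-1})$ are conjugation-\emph{invariant} (reindex the union by $p\mapsto qp$). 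For invariant symmetric $A$ the subgroup $[A]$ is open and \emph{normal}, so $\tau_{\mathcal{N}}$ has a base of open normal subgroups and lies in $\mathbf{NA_b}$; the abelian, resp. Boolean, restriction is inherited from $P$.

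For item (3), with $\Omega=\mathbf{NA}\cap\mathbf{Prec}$, an open subgroup of a precompact group has finite index, so I would take $\mathcal{N}:=\{N\lhd P\ :\ [P:N]<\infty,\ \exists A\in\alpha\ [A]\subseteq N\}$. This is a filter base closed under intersection (for $N_1,N_2\in\mathcal{N}$ choose $A_3\in\alpha$ with $A_3\subseteq A_1\cap A_2$, whence $[A_3]\subseteq N_1\cap N_2$, a finite-index normal subgroup), its members are normal subgroups, so it is automatically the base of a group topology, and that topology is non-archimedean with every basic subgroup of finite index, hence precompact; as each $N\in\mathcal{N}$ contains some $A\in\alpha$ we get $\alpha\to e$. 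For maximality, a $\langle\alpha\rangle_{\Omega}$-neighbourhood $V$ of $e$ contains an open finite-index subgroup, whose normal core $N$ is still open, normal and of finite index, and picking $A\subseteq N$ gives $[A]\subseteq N\in\mathcal{N}$ with $N\subseteq V$. Property $(a^{*})$ is carried along to keep the construction compatible with item (2); for finite-index subgroups normality is in any case automatic through cores.

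Item (4) is the genuinely different case, since $\mathbf{SIN}\not\subseteq\mathbf{NA}$: neighbourhoods are no longer subgroups and the open-subgroup step is unavailable. Here I would follow Dierolf--Roelcke \cite{DR81}: assuming (Lemma \ref{l:dar}(2)) that each $A\in\alpha$ is symmetric and invariant, set $\mathcal{N}:=\{[(A_n)]\ :\ A_n\in\alpha\ (n\in\N)\}$. Symmetry and invariance make every $[(A_n)]$ symmetric and invariant, so inversion continuity, conjugation continuity and the balanced property are immediate, and $e\in A_{1}A_{2}\subseteq[(A_n)]$ gives $e\in[(A_n)]$ while $A_{1}\subseteq[(A_n)]$ gives $\alpha\to e$. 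The step I expect to be the main obstacle is the multiplication axiom: given $(A_n)$ one must find $(B_n)$ with $[(B_n)]\,[(B_n)]\subseteq[(A_n)]$. This is the shuffle argument of \cite[Chapter 4]{DR81}: taking $B_n\subseteq A_{2n}\cap A_{2n+1}$, a product of two shuffled words in the $B_n$ is relabelled so that repeated indices become distinct and, using invariance to move factors past one another (each $A_i$ is invariant, so $a_i a_j\in A_j A_i$), is rearranged into a single shuffled word in the $A_n$. For maximality I would replace the open-subgroup step by telescoping: given an invariant symmetric $V\in N_e(P,\langle\alpha\rangle_{\mathbf{SIN}})$, continuity of multiplication together with $\alpha\to e$ permits a recursive choice of $A_n\in\alpha$ with $A_{1}A_{2}\cdots A_{n}\subseteq V$ for all $n$, and invariance reorders any permutation word $A_{\pi(1)}\cdots A_{\pi(m)}$ into $A_{1}\cdots A_{m}\subseteq V$, whence $[(A_n)]\subseteq V$ and $\langle\alpha\rangle_{\mathbf{SIN}}\subseteq\tau_{\mathcal{N}}$. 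I expect this shuffle/telescoping bookkeeping to be the only laborious part; items (1)--(3) reduce cleanly to the subgroup template once the symmetrizations of Lemma \ref{l:dar} are in place.
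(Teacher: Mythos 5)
Your proposal is correct and follows essentially the same route as the paper: construct the candidate topology with base $\{[A]\}$ (resp. the finite-index normal subgroups, resp. $\{[(A_n)]\}$), verify the group-topology axioms using $(a)$/$(a^*)$ for conjugation, and obtain maximality from the fact that in an $\mathbf{NA}$ group every neighbourhood of $e$ contains an open subgroup $H$, so $A\subseteq H$ forces $[A]\subseteq H$ — which is exactly the paper's Lemma \ref{lem:fna}, here inlined — with item (4) deferred in both cases to the shuffle argument of \cite[Prop.\ 4.28]{DR81}. Your explicit symmetrization via Lemma \ref{l:dar} to guarantee $[A]^{-1}=[A]$ is a small point the paper leaves implicit, but it does not change the argument.
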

\begin{proof}
$(1):$ Clearly $[A]^{2}\subseteq [A]$ and $[A]^{-1}=[A]\ \forall
A\in \alpha.$ Moreover,  for every $A\in \alpha$ and for every $p\in
P$ there exists $B\in \alpha$ such that $p[B]p^{-1}\subseteq [A].$
Indeed, we can use property $(a)$ to find $B \in \alpha$ such that
$pBp^{-1}\subseteq A.$ It follows that $p[B]p^{-1}\subseteq [A].$
This proves that
there exists a non-archimedean group topology $\mathcal{T}$ such
that $$\{[A]:A\in \alpha\}$$ is a base of $N_{e}(P,\mathcal{T}).$
Clearly, $\alpha$ converges to $e$ with respect to $\mathcal{T},$
and therefore $\mathcal{T} \subseteq \langle \alpha
\rangle_{\Omega}.$

Conversely,
let $\sigma$ be any non-archimedean group
topology on $P$ such that
$$\forall U\in N_{e}(P,\sigma) \ \exists A\in
\alpha \ A\subseteq U.$$

To prove that $\sigma \subseteq \mathcal{T}$, let $U\in
N_{e}(P,\sigma)$ be given. By Lemma \ref{lem:fna} there exists $V$
in $N_{e}(P,\sigma)$ such that $[V]\subseteq U,$ and, by the
assumption,   there exists a set $A\in  \alpha$ such that
$A\subseteq V$. Consequently, $[A]\subseteq [V]\subseteq U,$ which
proves $\sigma\subseteq \mathcal{T}.$\\ $(2):$ The proof of the
"balanced case" is quite similar. The only difference is 
the new condition
$$ \forall A\in \alpha \ \exists B\in \alpha \  \forall p\in P  \
\ pBp^{-1}\subseteq A,$$ which implies that the topology generated by the
sets $[A]$ is also balanced.\\
$(3):$ Precompact case:  clearly there exists a non-archimedean
precompact group topology $\mathcal{T}$ on $P$ such that   $$\{N\lhd
P| \ [P:N]<\infty \wedge \exists A\in \alpha \ [A]\subseteq N\}$$ is
a base of $N_{e}(P,\mathcal{T}).$ It is also trivial to see that
$\alpha$ converges to $e$ with respect to $\mathcal{T}.$
 Then, $\mathcal{T}$ is coarser
than $\langle \alpha \rangle_{\Omega}.$  Let $\sigma$ be any
precompact non-archimedean group topology on $P$ such that
$$\forall U\in N_{e}(P,\sigma) \ \exists A\in
\alpha \ A\subseteq U.$$

To prove that $\sigma \subseteq \mathcal{T}$, let $N\in
N_{e}(P,\sigma)$ be given. We can assume that $N$ is a finite-index
normal subgroup of $P.$  By Lemma \ref{lem:fna} there exists $V$ in
$N_{e}(P,\sigma)$ such that $[V]\subseteq U,$ and, by the
assumption,   there exists a set $A\in  \alpha$ such that
$A\subseteq V$. Consequently, $[A]\subseteq [V]\subseteq N$ which
proves $\sigma\subseteq \mathcal{T}.$\\
$(4):$ The proof is completely the same as the proof of
\cite[Proposition 4.28]{DR81}. Observe that from condition
$(b^*)$ it follows that the group topology, determined by the sets
$[(A_n)],$ is also balanced.
\end{proof}

\subsection{The structure of the free  $\mathbf{NA}$ topological
groups}\label{sub:str}

Let $(X,{\mathcal U})$ be a non-archimedean uniform space,
$Eq(\mathcal U)$ be the set of equivalence relations from ${\mathcal
U}$.
Denote by $j_2$ the mapping $(x,y)\mapsto x^{-1}y$ from $X^{2}$
to either $F(X),A(X)$ or $B(X)$ and by $j_{2}^{\ast}$ the mapping
$(x,y)\mapsto xy^{-1}.$ 

\begin{lemma}\label{lem:one} 
Let $(X,{\mathcal U})$ be non-archimedean and let 
$\mathcal{B}\subseteq Eq(\mathcal U)$ be a  base of ${\mathcal U}$.
 \ben
\item The topology of  $F_ {\sna}$  is the strongest among all non-archimedean
Hausdorff group topologies on $F(X)$ in which the filter
base
$$\mathcal{F}=\{j_{2}(V)\cup j_{2}^{\ast}(V)| \ V\in \mathcal{B} \}$$
converges to $e.$ \item For $$\Omega \in \{\mathbf{NA_b},\mathbf{NA}
\cap \mathbf{Prec}, \mathbf{AbNA}, \mathbf{BoolNA}\}$$ the topology
of $F_{\Omega}$ is
$\langle\mathcal{F}\rangle_{\Omega}$ where
$$\mathcal{F}=\{j_{2}(V)| \ V\in \mathcal{B}
\}.$$ \een

\end{lemma}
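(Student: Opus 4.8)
The plan is to deduce both statements from the universal property of the free $\mathbf{NA}$ groups (Definition \ref{d:FreeGr}, Theorem \ref{thm:Samuel}) together with a single uniform-continuity observation. Write $\tau_{\mathrm{free}}$ for the given free topology on the algebraically free object $P$ (so $P=F(X)$ for $F_{\sna}, F^b_{\sna}$ and $F^{\scriptscriptstyle Prec}_{\sna}$, while $P=A(X)$ or $B(X)$ in the abelian and Boolean cases, by Theorems \ref{thm:fnag} and \ref{thm:fnprec}), and recall that the universal morphism $i\colon (X,\mathcal U)\to (P,\tau_{\mathrm{free}})$ is uniformly continuous. The crucial elementary remark is: if $\tau$ is a Hausdorff $\mathbf{NA}$ group topology on $P$ in which $\mathcal F\to e$, then $i\colon(X,\mathcal U)\to (P,\tau)$ is uniformly continuous for the two-sided uniformity $\mathcal U_{l \vee r}$. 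Indeed, given $W\in N_e(P,\tau)$ there is $V\in\mathcal B$ with the corresponding member of $\mathcal F$ contained in $W$, so $(x,y)\in V$ yields $x^{-1}y\in W$ (and $xy^{-1}\in W$ in case (1)), which is exactly $\mathcal U_{l \vee r}$-closeness of $i(x),i(y)$.

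For (1), first note that $\tau_{\mathrm{free}}$ itself is a Hausdorff $\mathbf{NA}$ topology in which $\mathcal F\to e$: uniform continuity of $i$ into $(F_{\sna},\mathcal U_{l \vee r})$ means that for each $W\in N_e$ some $V\in\mathcal B$ satisfies $j_2(V)\cup j_{2}^{\ast}(V)\subseteq W$, which is precisely $\mathcal F\to e$. (This is why both $j_2$ and $j_{2}^{\ast}$ are needed here: a general $\mathbf{NA}$ group is not balanced, so one must control the left and the right uniformity simultaneously.) Conversely, let $\tau$ be any Hausdorff $\mathbf{NA}$ group topology on $F(X)$ with $\mathcal F\to e$. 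By the remark above $i\colon(X,\mathcal U)\to (F(X),\tau)$ is uniformly continuous, so the universal property yields a continuous homomorphism $\Phi\colon F_{\sna}\to (F(X),\tau)$ with $\Phi\circ i=i$. As $i(X)$ generates and both sides are algebraically $F(X)$, the map $\Phi$ is the identity; its continuity says exactly $\tau\subseteq\tau_{\mathrm{free}}$. Hence $\tau_{\mathrm{free}}$ is the strongest such topology.

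For (2), observe that in all four cases every group in $\Omega$ is balanced (precompact groups embed into compact ones and are therefore $\mathbf{SIN}$), so the left, right and two-sided uniformities coincide; this is why the single family $\mathcal F=\{j_2(V)\}$ suffices, $j_2(V)$ alone controlling $\mathcal U_{l \vee r}$. As in (1), $(P,\tau_{\mathrm{free}})\in\Omega$ and $\mathcal F\to e$ in $\tau_{\mathrm{free}}$, so $\tau_{\mathrm{free}}$ is one of the topologies over which the finest topology in Definition \ref{def:finom} is taken; thus $\tau_{\mathrm{free}}\subseteq\langle\mathcal F\rangle_{\Omega}$. Since $\langle\mathcal F\rangle_{\Omega}$ is finer than the Hausdorff topology $\tau_{\mathrm{free}}$, it is itself Hausdorff; therefore $(P,\langle\mathcal F\rangle_{\Omega})$ is a Hausdorff group of $\Omega$ in which $\mathcal F\to e$, and the uniform-continuity remark plus the universal property (exactly as in (1)) produce the identity continuous homomorphism $(P,\tau_{\mathrm{free}})\to (P,\langle\mathcal F\rangle_{\Omega})$, i.e. $\langle\mathcal F\rangle_{\Omega}\subseteq\tau_{\mathrm{free}}$. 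Combining the two inclusions gives $\tau_{\mathrm{free}}=\langle\mathcal F\rangle_{\Omega}$.

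The step I expect to be the genuine obstacle is guaranteeing Hausdorffness of $\langle\mathcal F\rangle_{\Omega}$: by Definition \ref{def:finom} this finest topology is a priori allowed to be non-Hausdorff, and without separation one cannot invoke the universal property of the (Hausdorff) free group. I avoid redoing the separation computation of Theorem \ref{thm:fnag} by using instead the soft observation that a group topology finer than a Hausdorff one is automatically Hausdorff, once the inclusion $\tau_{\mathrm{free}}\subseteq\langle\mathcal F\rangle_{\Omega}$ is in hand. If one prefers an intrinsic argument, one can combine Lemma \ref{l:dar}(2) (passing from $\mathcal F$ to the conjugation-saturated family $\mathcal M$, legitimate because $\Omega$ is balanced) with the explicit neighbourhood base $\{[A]:A\in\mathcal M\}$ furnished by Lemma \ref{lem:fin}(2)--(4), and check $\bigcap_{A}[A]=\{e\}$ by the same minimal-length and transitivity argument used for $B(X)$ in the proof of Theorem \ref{thm:fnag}.1.
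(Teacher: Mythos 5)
Your proof is correct and follows essentially the same route as the paper's: both rest on the observation that $\mathcal F\to e$ in a group topology $\tau$ on the algebraically free object exactly when the universal map $i:(X,\mathcal U)\to (P,\tau)$ is uniformly continuous, and then invoke the universal property of the free group to get maximality. Your explicit handling of the possible non-Hausdorffness of $\langle\mathcal F\rangle_{\Omega}$ in part (2) (via the inclusion $\tau_{\mathrm{free}}\subseteq\langle\mathcal F\rangle_{\Omega}$ forcing $\langle\mathcal F\rangle_{\Omega}$ to be Hausdorff) is a small but worthwhile refinement of a point the paper's "very similar" remark leaves implicit.
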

\begin{proof}
$(1):$ First recall that   $F_ {\sna}$ is
algebraically the abstract free group $F(X)$ (see Theorem
\ref{thm:fnag}.2). Let $\tau$ be a non-archimedean group topology on
$F(X).$ We show that $Id:(X,{\mathcal U})\to (F(X),\tau)$ is
uniformly continuous if and only if $\mathcal{F}$ converges to $e$
with respect to $\tau.$

The map $Id:(X,{\mathcal U})\to (F(X),\tau)$ is uniformly continuous
if and only if for every  $U\in N_{e}(F(X),\tau)$ there exists $V\in
\mathcal{B}$
 such that $$V\subseteq \tilde{U}=\{(x,y): x^{-1}y\in U \wedge
 xy^{-1}\in U\}.$$ The latter is equivalent to the following condition: there exists $V\in \mathcal{B}$
 such that $$j_{2}(V)\cup j_{2}^{\ast}(V)\subseteq U.$$ Thus, $Id:(X,{\mathcal U})\to (F(X),\tau)$ is uniformly continuous
 if and only if
$\mathcal{F}$ converges to $e$ with respect to $\tau.$ Clearly, the
topology of  $F_ {\sna}$  is a  non-archimedean
Hausdorff group topology on $F(X)$ in which the filter
base $\mathcal{F}$ converges to $e$. Moreover, for every non-archimedean
Hausdorff group topology $\tau$ on $F(X)$ in which $\mathcal{F}$
converges  to $e$, the map $Id:(X,{\mathcal U})\to (F(X),\tau)$ is uniformly
continuous. Therefore $Id:F_ {\sna} \to (F(X),\tau)$ is
uniformly continuous.
This completes the proof of $(1).$ \\ 
$(2):$ The proof  is very similar to the previous
case. 
 This time we can consider the filter base $\{j_{2}(V)| \ V\in \mathcal{B} \}$ instead of
  $\{j_{2}(V)\cup j_{2}^{\ast}(V)| \ V\in \mathcal{B} \}$ since
all the groups $F_{\Omega}$ are balanced.
\end{proof}
\begin{lemma}\label{lem:banone}
Let $(X,{\mathcal U})$ be a uniform space. For $\Omega =
\mathbf{SIN}$ the topology of $F_{\Omega}$ is
$\langle\mathcal{F}\rangle_{\Omega}$ where
$$\mathcal{F}=\{j_{2}(V)| \ V\in \mathcal{B}\}.$$
\end{lemma}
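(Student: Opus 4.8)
The plan is to follow the universal-property argument of Lemma \ref{lem:one}, adapting it from the non-archimedean to the balanced setting and to an arbitrary base $\mathcal B$ of $\mathcal U$ (here $\mathcal U$ need not be non-archimedean, so $\mathcal B$ consists of general entourages rather than equivalence relations). As a preliminary I would record that the uniform free balanced group $F^b(X,\mathcal U)=F_{\mathbf{SIN}}(X,\mathcal U)$ is algebraically the abstract free group $F(X)$, with $i\colon X\hookrightarrow F(X)$ the canonical inclusion realized as a uniform embedding. For a general uniform space this rests on Graev's extension of an invariant pseudometric on $X$ to a maximal invariant pseudometric on $F(X)$ (cf. \cite{GRA} and Numella \cite{Num2}), which at the same time shows that $\tau_{F^b}$, the topology of $F^b(X,\mathcal U)$, is a Hausdorff balanced group topology on $F(X)$. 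Consequently $\langle\mathcal F\rangle_{\mathbf{SIN}}$ lives on the same underlying group and the asserted equality is meaningful.

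The key step is the balanced analogue of the characterization in Lemma \ref{lem:one}: for every balanced group topology $\tau$ on $F(X)$, the inclusion $i\colon(X,\mathcal U)\to(F(X),\tau)$ is uniformly continuous if and only if $\mathcal F=\{j_2(V): V\in\mathcal B\}$ converges to $e$ in $(F(X),\tau)$. To prove it I would pull back a basic two-sided entourage $\{(g,h): g^{-1}h\in U,\ gh^{-1}\in U\}$ along $i\times i$ and observe that it contains some $V\in\mathcal B$ exactly when $j_2(V)\cup j_2^{\ast}(V)\subseteq U$. Because $(F(X),\tau)$ is balanced, $N_e$ admits a base of symmetric, conjugation-invariant sets $U$; for such $U$, conjugation-invariance gives $x^{-1}y\in U\iff yx^{-1}=x(x^{-1}y)x^{-1}\in U$ and symmetry gives $yx^{-1}\in U\iff xy^{-1}\in U$, so the two inclusions collapse to $j_2(V)\subseteq U$. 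This is precisely $\mathcal F\to e$, and it explains why one uses $\mathcal F=\{j_2(V)\}$ rather than $\{j_2(V)\cup j_2^{\ast}(V)\}$, exactly as in the balanced case of Lemma \ref{lem:one}.2.

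The equality $\tau_{F^b}=\langle\mathcal F\rangle_{\mathbf{SIN}}$ then follows by sandwiching. Since the inclusion into $F^b(X,\mathcal U)$ is uniformly continuous, $\tau_{F^b}$ is itself a balanced group topology with $\mathcal F\to e$, so by Definition \ref{def:finom} (which declares $\langle\mathcal F\rangle_{\mathbf{SIN}}$ the finest such topology) we get $\tau_{F^b}\subseteq\langle\mathcal F\rangle_{\mathbf{SIN}}$; being finer than the Hausdorff topology $\tau_{F^b}$, the topology $\langle\mathcal F\rangle_{\mathbf{SIN}}$ is itself Hausdorff, hence an admissible target. Conversely, $\mathcal F\to e$ in $\langle\mathcal F\rangle_{\mathbf{SIN}}$ by definition, so the characterization above makes $i\colon(X,\mathcal U)\to(F(X),\langle\mathcal F\rangle_{\mathbf{SIN}})$ uniformly continuous; the universal property of $F^b(X,\mathcal U)$ yields a continuous homomorphism fixing $X$, which must be the identity, whence $\langle\mathcal F\rangle_{\mathbf{SIN}}\subseteq\tau_{F^b}$. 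A concrete neighbourhood base at $e$ can finally be read off by replacing $\mathcal F$ with the conjugation-saturated filter base $\mathcal M$ of Lemma \ref{l:dar}.2 and applying Lemma \ref{lem:fin}.4, whose base consists of the Dierolf--Roelcke sets $[(A_n)]$ with $A_n\in\mathcal M$.

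The main obstacle I anticipate is precisely the preliminary algebraic-freeness and Hausdorffness claim for an arbitrary uniform space: unlike the non-archimedean cases of Theorem \ref{thm:fnag}, here $\mathcal B$ need not consist of equivalence relations, so one cannot separate reduced words by homomorphisms into discrete quotients and must instead invoke Graev-type invariant pseudometric extensions to guarantee that $F^b(X,\mathcal U)$ carries no additional relations and is Hausdorff.
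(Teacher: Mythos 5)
Your proposal is correct and follows essentially the same route as the paper, whose proof of this lemma simply repeats the universal-property "finest topology" sandwich from Lemma \ref{lem:one}.2. You additionally make explicit two points the paper leaves implicit — that for a general (not necessarily non-archimedean) uniform space the algebraic freeness and Hausdorffness of $F^b(X,\mathcal U)$ must be secured by Graev-type invariant pseudometric extensions rather than by discrete quotients as in Theorem \ref{thm:fnag}, and that conjugation-invariance plus symmetry of basic neighbourhoods lets one replace $j_2(V)\cup j_2^{\ast}(V)$ by $j_2(V)$ — both of which are accurate and welcome clarifications.
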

\begin{proof}
Use the same arguments as those appearing in the proof of Lemma
\ref{lem:one}.2.
\end{proof}
\begin{defi} \label{def:desc} \ben \item Following \cite{pes85}, for  every $\psi\in {\mathcal U}^{F(X)}$
let $$V_{\psi}:=\bigcup_{w\in F(X)}w(j_{2}(\psi(w))\cup
j_{2}^{\ast}(\psi(w)))w^{-1}.$$

\item  As a particular case in which every $\psi$ is a constant
function we obtain the  set
$$\tilde{\eps}:=\bigcup_{w\in F(X)}w(j_{2}(\eps)\cup
j_{2}^{\ast}(\eps))w^{-1}.$$ \een
\end{defi}
\begin{remark}\label{rem:sym}
Note that if $\eps\in Eq(\mathcal U)$  then $(j_{2}(\eps))^{-1}=j_{2}(\eps),
\ (j_{2}^{\ast}(\eps))^{-1}= (j_{2}^{\ast}(\eps))$ and
$$\tilde{\eps}=\bigcup_{w\in F(X)}w(j_{2}(\eps)\cup
j_{2}^{\ast}(\eps))w^{-1}=\bigcup_{w\in F(X)}wj_{2}(\eps)w^{-1}.$$ Indeed,
this follows from the equality
$wts^{-1}w^{-1}=(ws)s^{-1}t(ws)^{-1}.$

Note also that the subgroup $[\widetilde{\eps}]$ (see Remark \ref{r:subgroup}) generated by $\eps$ is normal in $F(X)$.
\end{remark}

The proof of the following lemma is straightforward.

\begin{lemma} \label{l:eps}
Let $\eps$ be an equivalent relation on a set $X$.
Consider the  function $f_{\varepsilon}:X \to
X/\varepsilon$. Then
   $ker(\overline{f_{\eps}})=[\tilde{\eps}]$, where $\overline{f_{\eps}}: F(X) \to F(X/\eps)$ is the induced onto homomorphism.
\end{lemma}

\begin{thm}\label{thm:desbal}
Let $(X,{\mathcal U})$ be a uniform space. Then
 $\{[(\tilde{\eps}_n)]:\ \eps_n\in  \mathcal U \ \forall n\in \N \}$ is a base of
$N_{e}(F^b).$
\end{thm}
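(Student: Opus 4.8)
The plan is to obtain the desired base by specialising the abstract description of final $\mathbf{SIN}$ topologies in Lemma \ref{lem:fin}.4 to the free balanced group. By Lemma \ref{lem:banone}, the topology of $F^b=F_{\mathbf{SIN}}(X,\mathcal U)$ is $\langle\mathcal F\rangle_{\mathbf{SIN}}$ carried by the abstract group $F(X)$, where $\mathcal F=\{j_{2}(V): V\in\mathcal B\}$ and $\mathcal B$ is any base of $\mathcal U$. Since $\mathcal U$ is a base of itself, I would take $\mathcal B=\mathcal U$, so that $\mathcal F=\{j_{2}(\eps):\eps\in\mathcal U\}$ is a filter base on $F(X)$.

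Because every group in $\mathbf{SIN}$ is balanced, Lemma \ref{l:dar}.2 applies and gives $\langle\mathcal F\rangle_{\mathbf{SIN}}=\langle\mathcal M\rangle_{\mathbf{SIN}}$, where
$$\mathcal M=\Big\{\bigcup_{p\in F(X)}\big(p\,V\,p^{-1}\cup p\,V^{-1}\,p^{-1}\big):\ V\in\mathcal F\Big\};$$
moreover, by the same lemma $\mathcal M$ satisfies the two properties $(a^{*})$ and $(b^{*})$ demanded by Lemma \ref{lem:fin}.4.

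The key step is to identify the members of $\mathcal M$ with the sets $\tilde\eps$ of Definition \ref{def:desc}. For $V=j_{2}(\eps)$ I would verify that
$$\bigcup_{p\in F(X)}\big(p\,j_{2}(\eps)\,p^{-1}\cup p\,(j_{2}(\eps))^{-1}\,p^{-1}\big)=\tilde\eps.$$
This reduces to the observation that the conjugation-closures of $(j_{2}(\eps))^{-1}$ and of $j_{2}^{\ast}(\eps)$ in $F(X)$ coincide: for $(x,y)\in\eps$ one has $j_{2}^{\ast}(x,y)=xy^{-1}=x\,(j_{2}(x,y))^{-1}\,x^{-1}$ and, conversely, $(j_{2}(x,y))^{-1}=x^{-1}\,j_{2}^{\ast}(x,y)\,x$, which is exactly the kind of identity recorded in Remark \ref{rem:sym}. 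Hence $\mathcal M=\{\tilde\eps:\eps\in\mathcal U\}$.

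It then remains to apply Lemma \ref{lem:fin}.4 with $\alpha=\mathcal M$: a base of $N_{e}\big(F(X),\langle\mathcal M\rangle_{\mathbf{SIN}}\big)=N_{e}(F^{b})$ is formed by the sets $[(A_{n})]$ with $A_{n}\in\mathcal M$, that is, exactly by $\{[(\tilde\eps_{n})]:\ \eps_{n}\in\mathcal U\ \forall n\in\N\}$, which is the assertion. The only point requiring genuine care is the identification $\mathcal M=\{\tilde\eps:\eps\in\mathcal U\}$, i.e. matching the formal symmetrisation $p\,V^{-1}\,p^{-1}$ produced by Lemma \ref{l:dar}.2 with the term $j_{2}^{\ast}(\eps)$ built into the definition of $\tilde\eps$; everything else is a direct chaining of the cited lemmas.
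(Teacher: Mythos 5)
Your argument is correct and is essentially the paper's own proof: Lemma \ref{lem:banone} to realize the topology of $F^b$ as $\langle\mathcal F\rangle_{\mathbf{SIN}}$, Lemma \ref{l:dar}.2 to pass to the symmetrized conjugation-invariant filter base $\mathcal M$, the identification $\mathcal M=\{\tilde\eps:\eps\in\mathcal U\}$ via Definition \ref{def:desc}.2 and Remark \ref{rem:sym}, and finally Lemma \ref{lem:fin}.4. The conjugacy identity you check for matching $pV^{-1}p^{-1}$ with $j_2^{\ast}(\eps)$ is exactly the one recorded in Remark \ref{rem:sym}, so no gap remains.
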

\begin{proof}
 By Lemma \ref{lem:banone} the topology of $F^b$ is $\langle \mathcal{F}\rangle_{\Omega}$ where
$$\mathcal{F}=\{j_{2}(\eps)| \ \eps\in \mathcal{B}\}$$ and $\Omega$ is the class of all  balanced topological groups.
According to Lemma \ref{l:dar}.2 we have
$\langle\mathcal{F}\rangle_{\Omega}=\langle \mathcal{M}
\rangle_{\Omega}$, where
$$\mathcal{M}:=\Big\{\bigcup_{w\in F(X)}(wAw^{-1}\cup
wA^{-1}w^{-1}):\ A\in\mathcal{F}  \Big \}.$$ In particular,
$$N_{e}(F^b)=N_{e}(F(X),\langle\mathcal{M}\rangle_{\Omega}).$$
By the description of the sets $\tilde{\eps}$ in Definition
\ref{def:desc}.2 and  Remark \ref{rem:sym} we have $\mathcal{M}=\{\tilde{\eps}:
\eps\in \mathcal{B} \}$. Finally, use Lemma \ref{l:dar}.2 and Lemma
\ref{lem:fin}.4 to complete the proof.
\end{proof}
\begin{thm}\label{thm:nafin}
Let $(X,{\mathcal U})$ be non-archimedean and let 
$\mathcal{B}\subseteq Eq(\mathcal U)$ be a  base of ${\mathcal U}$.
Then: \ben
\item The  family (of subgroups) $\{[\mathcal{V}_{\psi}]:\ \psi\in \mathcal{B}^{F(X)}\}$ is a base of
$N_{e}(F_ {\sna}).$
\item
\ben
\item The  family (of normal subgroups)
$\{[\tilde{\eps}]:\ \eps\in  \mathcal{B}\}$ is a base of $N_{e}(F^b_ {\sna}).$
\item
The topology of $F^b_ {\sna}$ is the weak topology generated by the system of homomorphisms
$\{\overline{f_{\eps}}: F(X) \to F(X/\eps)\}_{\eps \in \mathcal{B}}$ on discrete groups $F(X/\eps)$.
\een
\een
\end{thm}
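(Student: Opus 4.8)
The plan is to run the explicit generating filter bases produced by Lemma \ref{lem:one} through the machinery of Lemmas \ref{l:dar} and \ref{lem:fin}, and then to bridge the formal \emph{finest} topologies obtained there with the genuine Hausdorff free groups via the separation supplied by Lemma \ref{l:eps}. For assertion $(1)$, Lemma \ref{lem:one}.1 tells me that the topology of $F_{\sna}$ is the strongest non-archimedean Hausdorff group topology on $F(X)$ in which $\mathcal{F} = \{j_2(V) \cup j_2^{*}(V) : V \in \mathcal{B}\}$ converges to $e$. I apply Lemma \ref{l:dar}.1 to $\mathcal{F}$ to pass to the equivalent filter base $\mathcal{L}$, which satisfies condition $(a)$ and generates the same topology $\langle\mathcal{F}\rangle_{\mathbf{NA}}$ (this exists, since $F_{\sna}$ already provides such a topology). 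Since every $V \in \mathcal{B}$ is an equivalence relation, each $j_2(V)$ and $j_2^{*}(V)$ is symmetric (Remark \ref{rem:sym}), so the conjugate-closure of a family $(A_w)_w$ with $A_w = j_2(\psi(w)) \cup j_2^{*}(\psi(w))$ collapses to $\bigcup_{w \in F(X)} w\bigl(j_2(\psi(w)) \cup j_2^{*}(\psi(w))\bigr)w^{-1}$, i.e.\ exactly the set $V_\psi = \mathcal{V}_\psi$ of Definition \ref{def:desc}.1. Thus $\mathcal{L} = \{V_\psi : \psi \in \mathcal{B}^{F(X)}\}$, and Lemma \ref{lem:fin}.1 shows that the subgroups $[V_\psi]$ form a base of $N_e$ for $\langle\mathcal{F}\rangle_{\mathbf{NA}}$.

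To conclude that this is the topology of $F_{\sna}$ I must identify $\langle\mathcal{F}\rangle_{\mathbf{NA}}$ with the Hausdorff topology singled out in Lemma \ref{lem:one}.1. As $F_{\sna}$ is itself a non-archimedean Hausdorff topology in which $\mathcal{F} \to e$, it is coarser than $\langle\mathcal{F}\rangle_{\mathbf{NA}}$; for the reverse inclusion it suffices to check that $\langle\mathcal{F}\rangle_{\mathbf{NA}}$ is Hausdorff and then invoke the maximality in Lemma \ref{lem:one}.1. The constant functions $\psi \equiv \eps$ give $[V_\psi] = [\tilde\eps] = \ker(\overline{f_{\eps}})$ by Definition \ref{def:desc}.2 and Lemma \ref{l:eps}, so it is enough that the homomorphisms $\overline{f_{\eps}} : F(X) \to F(X/\eps)$ into the free discrete groups jointly separate points. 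Given $w \neq e$ in reduced form $x_1^{t_1}\cdots x_k^{t_k}$, the Hausdorffness of $\mathcal{U}$ yields $\eps \in \mathcal{B}$ with $(x_i,x_{i+1}) \notin \eps$ for all $i$ (trivial if $k=1$); then $\overline{f_{\eps}}(w) = [x_1]_\eps^{t_1}\cdots[x_k]_\eps^{t_k}$ is reduced and nontrivial in $F(X/\eps)$, so $w \notin [\tilde\eps]$. Hence $\bigcap_\psi [V_\psi] = \{e\}$, the topology is Hausdorff, and $(1)$ follows.

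Assertion $(2)(a)$ is the balanced analogue and parallels Theorem \ref{thm:desbal}. By Lemma \ref{lem:one}.2 the topology of $F^b_{\sna}$ equals $\langle\mathcal{F}\rangle_{\mathbf{NA_b}}$ for $\mathcal{F} = \{j_2(V) : V \in \mathcal{B}\}$. Since every member of $\mathbf{NA_b}$ is balanced, Lemma \ref{l:dar}.2 replaces $\mathcal{F}$ by $\mathcal{M} = \{\bigcup_{w} (w\,j_2(V)\,w^{-1} \cup w\,(j_2(V))^{-1}\,w^{-1}) : V \in \mathcal{B}\}$, which by the symmetry of $j_2(V)$ (Remark \ref{rem:sym}) is exactly $\{\tilde\eps : \eps \in \mathcal{B}\}$ and satisfies the stronger property $(a^{*})$. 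Lemma \ref{lem:fin}.2 then gives that the subgroups $[\tilde\eps]$ ($\eps \in \mathcal{B}$) form a base of $N_e(F^b_{\sna})$; these are normal in $F(X)$ by Remark \ref{rem:sym}, and their intersection is trivial by the computation of the previous paragraph, confirming the required Hausdorffness.

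Finally, $(2)(b)$ is a reformulation of $(2)(a)$. The weak group topology induced on $F(X)$ by $\{\overline{f_{\eps}} : F(X) \to F(X/\eps)\}_{\eps \in \mathcal{B}}$ into discrete groups has a base at $e$ consisting of the finite intersections $\bigcap_{i=1}^{n} \ker(\overline{f_{\eps_i}}) = \bigcap_{i=1}^{n} [\tilde{\eps}_i]$, using Lemma \ref{l:eps}. As $\mathcal{B}$ is directed, for any $\eps_1,\dots,\eps_n$ there is $\eps \in \mathcal{B}$ with $\eps \subseteq \bigcap_i \eps_i$, whence $\tilde\eps \subseteq \tilde{\eps}_i$ and $[\tilde\eps] \subseteq \bigcap_i [\tilde{\eps}_i]$; thus this base coincides with $\{[\tilde\eps] : \eps \in \mathcal{B}\}$, which by $(2)(a)$ is a base for $F^b_{\sna}$. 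The step I expect to demand the most care is the symmetrization bookkeeping --- verifying that the conjugate-closures delivered by Lemma \ref{l:dar} are literally the sets $V_\psi$ and $\tilde\eps$ of Definition \ref{def:desc} --- together with the Hausdorffness bridge needed in $(1)$ to pass from the formal topology $\langle\mathcal{F}\rangle_{\mathbf{NA}}$ to the actual free group $F_{\sna}$.
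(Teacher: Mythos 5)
Your argument is correct and follows the paper's own route exactly: Lemma \ref{lem:one} identifies the topology as $\langle\mathcal{F}\rangle_{\Omega}$, Lemma \ref{l:dar} converts $\mathcal{F}$ to the conjugation-saturated families which Remark \ref{rem:sym} identifies with the sets $\mathcal{V}_{\psi}$ and $\tilde{\eps}$ of Definition \ref{def:desc}, Lemma \ref{lem:fin} produces the subgroup bases, and Lemma \ref{l:eps} handles $(2)(b)$. Your extra ``Hausdorffness bridge'' in $(1)$ is sound but redundant: since $\langle\mathcal{F}\rangle_{\mathbf{NA}}$ is finer than the (Hausdorff) topology of $F_{\sna}$, it is automatically Hausdorff, so the separation argument via the $\overline{f_{\eps}}$ is not needed.
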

\begin{proof}
 $(1):$ By Lemma
\ref{lem:one}.1 the topology of $F_ {\sna}$ is
$\langle\mathcal{F}\rangle_{\Omega}$ where
$$\mathcal{F}=\{j_{2}(\eps)\cup j_{2}^{\ast}(\eps)| \ \eps\in \mathcal{B}
\}$$ and $\Omega$ is the class of all non-archimedean topological
groups. According to Lemma \ref{l:dar}.1
$\langle\mathcal{F}\rangle_{\Omega}=\langle\mathcal{L}\rangle_{\Omega}$
where
$$\mathcal{L}:=\Big\{\bigcup_{w\in F(X)}(wA_{w}w^{-1}\cup
wA^{-1}_{w}w^{-1}):\ A_{w}\in\mathcal{F}, \ \ w \in F(X) \Big \}.$$ In
particular,
$$N_{e}(F_
{\sna})=N_{e}(F(X),\langle\mathcal{L}\rangle_{\Omega}).$$ By the
description of the sets $\mathcal{V}_{\psi}$ in Definition \ref{def:desc}.1
and  Remark \ref{rem:sym} we have $$\mathcal{L}=\{\mathcal{V}_{\psi}: \ \psi\in \mathcal{B}^{F(X)}  \}.$$ Finally, use
Lemma \ref{l:dar}.1 and Lemma \ref{lem:fin}.1 to conclude the proof. \\
$(2.a):$ By Lemma \ref{lem:one}.2 the topology of $F^b_
{\sna}$ is $\langle \mathcal{F}\rangle_{\Omega}$
where
$$\mathcal{F}=\{j_{2}(\eps)| \ \eps\in \mathcal{B}\}$$ and $\Omega$ is the class of all non-archimedean balanced topological groups.
According to Lemma \ref{l:dar}.2 $
\langle\mathcal{F}\rangle_{\Omega}=\langle \mathcal{M}
\rangle_{\Omega}$ where
$$\mathcal{M}:=\Big\{\bigcup_{w\in F(X)}(wAw^{-1}\cup
wA^{-1}w^{-1}):\ A\in\mathcal{F}  \Big \}.$$ In
particular,
$$N_{e}(F^b_ {\sna})=N_{e}(F(X),\langle\mathcal{M}\rangle_{\Omega}).$$
By the description of the sets $\tilde{\eps}$ in Definition
\ref{def:desc}.2 and  Remark \ref{rem:sym} $\mathcal{M}=\{\tilde{\eps}:
\eps\in \mathcal{B} \}$. Finally, use Lemma \ref{l:dar}.2 and Lemma
\ref{lem:fin}.2.

$(2.b):$ Use $(2.a)$ and observe that $ker(\overline{f_{\eps}})=[\tilde{\eps}]$ by Lemma \ref{l:eps}.
\end{proof}


\begin{thm}\label{thm:desna}
 Let $(X,{\mathcal U})$ be non-archimedean and let 
$\mathcal{B}\subseteq Eq(\mathcal U)$ be a  base of ${\mathcal U}$.
\ben
\item (abelian case) For every $\eps\in \mathcal{B}$  denote by
$<\eps>$ the subgroup of $A(X)$ algebraically generated by the set
$$\{x-y\in A(X) : (x,y) \in \eps\},$$
then
$\{<\eps>\}_{\eps \in \mathcal{B} }$ is  a base of
$N_0(A_{\sna}).$
\item (Boolean
case) If $<\eps>$ denotes the subgroup of  $B(X)$  algebraically
generated by 
$$\{x-y\in B(X) : (x,y) \in \eps\},$$ then
 $\{<\eps>\}_{\eps \in \mathcal{B}}$ is  a base of
$N_0(B_{\sna}).$
\een
\end{thm}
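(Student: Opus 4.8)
The plan is to specialize the machinery already assembled for the balanced case in Theorem \ref{thm:nafin}.2a to the abelian and Boolean settings, where conjugation is trivial and everything collapses. I would treat both parts at once, writing $\Omega = \mathbf{AbNA}$ for part (1) and $\Omega = \mathbf{BoolNA}$ for part (2). In either case the free group $F_\Omega$ (namely $A_{\sna}$, respectively $B_{\sna}$) exists by Theorem \ref{thm:Samuel}, and both classes consist of abelian, hence balanced, groups.

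First I would invoke Lemma \ref{lem:one}.2, which identifies the topology of $F_\Omega$ with the final $\Omega$-group topology $\langle \mathcal{F}\rangle_{\Omega}$, where $\mathcal{F}=\{j_{2}(\eps): \eps\in \mathcal{B}\}$. In additive notation $j_2(x,y)=x^{-1}y=y-x$, so $j_{2}(\eps)=\{y-x:(x,y)\in\eps\}$. Since each $\eps$ is an equivalence relation and thus symmetric, we get $j_{2}(\eps)=-j_{2}(\eps)$, that is $j_{2}(\eps)^{-1}=j_{2}(\eps)$, and moreover $j_2(\eps)$ is literally the generating set $\{x-y:(x,y)\in\eps\}$ of the statement (in the Boolean case $y-x=x+y$, and in the abelian case the two sets coincide by symmetry of $\eps$).

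Next I would observe the decisive simplification: because the ambient group is abelian, conjugation acts trivially, so the set $\mathcal{M}$ of Lemma \ref{l:dar}.2 reduces to $\mathcal{F}$ itself (just as the sets $\tilde\eps$ of Theorem \ref{thm:nafin}.2a collapse to $j_2(\eps)$ when $wAw^{-1}=A$). In particular condition $(a^*)$ of Lemma \ref{lem:fin}.2, namely $\forall A\in\mathcal{F}\ \exists B\in\mathcal{F}\ \forall p\ pBp^{-1}\subseteq A$, is satisfied trivially with $B=A$. Applying Lemma \ref{lem:fin}.2 to $\alpha=\mathcal{F}$ then yields that $\{[j_{2}(\eps)]:\eps\in\mathcal{B}\}$ is a base of $N_0(F_\Omega)$.

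Finally, since $j_{2}(\eps)^{-1}=j_{2}(\eps)$, Remark \ref{r:subgroup} tells us that $[j_{2}(\eps)]$ is precisely the subgroup algebraically generated by $j_{2}(\eps)$, i.e. $[j_2(\eps)]=\langle\eps\rangle$. This identifies the base produced above with $\{\langle\eps\rangle\}_{\eps\in\mathcal{B}}$ and settles both parts. I do not expect any genuine obstacle here: the whole content is that the balanced-case argument degenerates in the abelian and Boolean varieties, and the only point meriting a line of care is verifying the symmetry $j_{2}(\eps)=j_{2}(\eps)^{-1}$ (which comes from symmetry of $\eps$), since this is exactly what licenses Remark \ref{r:subgroup} and guarantees that each $[j_2(\eps)]$ is an honest subgroup rather than a mere union of powers.
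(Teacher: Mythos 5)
Your proposal is correct and follows essentially the same route as the paper: both reduce the statement to Lemma \ref{lem:one}.2 (identifying the topology as $\langle\mathcal{F}\rangle_{\Omega}$ for $\mathcal{F}=\{j_2(\eps)\}$), apply Lemma \ref{lem:fin}.2 after noting that conjugation is trivial so condition $(a^*)$ holds automatically, and use the symmetry of $\eps$ together with Remark \ref{r:subgroup} to identify $[j_2(\eps)]$ with the subgroup $\langle\eps\rangle$. Your explicit verification of $(a^*)$ and of $j_2(\eps)=j_2(\eps)^{-1}$ is exactly the content the paper compresses into its citations of Remark \ref{rem:sym} and Lemma \ref{lem:fin}.2.
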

\begin{proof}
$(1):$ By Lemma \ref{lem:one}.2 the topology of $A_
{\sna}$ is $\langle \mathcal{F}\rangle_{\sna}$ where
$$\mathcal{F}=\{j_{2}(\eps)| \ \eps\in \mathcal{B} \}.$$
Therefore,
$$N_0(A_{\sna})=N_0(A(X),\langle \mathcal{F}\rangle_{\sna}).$$
By Remark \ref{rem:sym} and Lemma \ref{lem:fin}.2 a base of
$N_0(A(X),\langle \mathcal{F}\rangle_{\sna})$ is formed by the sets
$[j_{2}(\eps)],$ where $\eps\in \mathcal{B}$ and
 $[j_{2}(\eps)]$ is the subgroup generated by $j_{2}(\eps).$ Since
 $\eps$ is symmetric we have
$$j_{2}(\eps)=\{y-x\in A(X) : (x,y) \in \eps\}=\{x-y\in A(X) :
(x,y)\in \eps\}.$$ The proof of $(2)$ is similar.
\end{proof}

\begin{remark}
Note that the system $\mathcal{B}$ in Theorem \ref{thm:nafin}.2 induces a naturally defined inverse limit
$\underleftarrow{\lim}_{\eps \in \mathcal{B}} F(X/\eps)$ (of discrete groups $F(X/\eps)$) which can be identified with
the complete group $\widehat{F^b_ {\sna}}$. Similarly, $\underleftarrow{\lim}_{\eps \in \mathcal{B}} \ A(X/\eps)$ and
$\underleftarrow{\lim}_{\eps \in \mathcal{B}} \ B(X/\eps)$ can be identified with the groups $\widehat{A_ {\sna}}$ and $\widehat{B_ {\sna}}$ respectively.
\end{remark}

\begin{thm} \label{t:metrizability}
Let $X:=(X,{\mathcal U})$ be a Hausdorff non-archimedean space and
$$
G \in \{F^{b}_{\sna}(X), A_{\sna}(X), B_{\sna}(X)\}.
$$
Then
\ben
\item 
$\chi (G)=w({\mathcal U})$  
and $w(G)=w({\mathcal U}) \cdot l({\mathcal U})$.
\item If $(X,d)$ is an ultra-metric space then  $G$ is an
ultra-normable group of the same topological weight as  $X$.
\een
\end{thm}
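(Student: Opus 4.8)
The plan is to first pin down the character $\chi(G)$ from the explicit neighborhood bases already established, then derive $w(G)$ from the general cardinal–invariant identities of Lemma~\ref{l:UnLind}, and finally read off metrizability and ultra-normability in the metric case. Throughout I treat the three groups $F^b_{\sna}(X), A_{\sna}(X), B_{\sna}(X)$ uniformly, invoking for each the relevant description of a base at the identity.

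First I would bound $\chi(G)$ from above. Fix a base $\mathcal{B}\subseteq Eq(\mathcal{U})$ of $\mathcal{U}$ with $|\mathcal{B}|=w(\mathcal{U})$, which is possible since $\mathcal{U}$ is non-archimedean. For $G=F^b_{\sna}$, Theorem~\ref{thm:nafin}.2 gives the base of normal subgroups $\{[\tilde{\eps}]:\ \eps\in \mathcal{B}\}$ of $N_e(G)$, while for $G=A_{\sna}$ and $G=B_{\sna}$, Theorem~\ref{thm:desna} gives the base $\{<\eps>\}_{\eps\in \mathcal{B}}$ of $N_0(G)$. In each case the base is indexed by $\mathcal{B}$, so $\chi(G)\leq w(\mathcal{U})$. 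For the reverse inequality I would use that $i:(X,\mathcal{U})\to G$ is a uniform embedding (Theorem~\ref{thm:fnag}.1): the subspace $i(X)$ carries the restriction of the two-sided uniformity $\mathcal{U}_{l\vee r}$, whose weight equals $\chi(G)$, and a uniform subspace cannot have larger weight, so $w(\mathcal{U})\leq \chi(G)$. Hence $\chi(G)=w(\mathcal{U})$.

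Next I would compute $w(G)$. By Lemma~\ref{l:UnLind}.1 we have $w(G)=\chi(G)\cdot l^u(G)$, so it remains to identify $l^u(G)$. Since $i(X)$ topologically generates $G$ (Theorem~\ref{thm:Samuel}.3) and the restriction of $\mathcal{U}_{l\vee r}$ to $i(X)$ coincides with $\mathcal{U}$ — again because $i$ is a uniform embedding — Lemma~\ref{l:UnLind}.3 yields $l^u(G)=l(\mathcal{U})$. Combining the two computations gives $w(G)=w(\mathcal{U})\cdot l(\mathcal{U})$, which is assertion (1). For assertion (2), suppose $(X,d)$ is ultra-metric, so $\mathcal{U}$ is metrizable and $w(\mathcal{U})=\aleph_0$. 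By (1) we get $\chi(G)=\aleph_0$, so $G$ is metrizable, and each of $F^b_{\sna}, A_{\sna}, B_{\sna}$ is balanced. Lemma~\ref{l:MetrBal}.2 then provides a single invariant ultra-metric $\rho$ generating the topology of $G$, and Lemma~\ref{l:ultraProp}.3 converts $\rho$ into an invariant ultra-norm $p(x):=\rho(e,x)$ (an ultra-norm rather than merely an ultra-seminorm precisely because $\rho$ is a metric), so $G$ is ultra-normable. Finally $w(G)=w(\mathcal{U})\cdot l(\mathcal{U})=\aleph_0\cdot l(\mathcal{U})=w(X)$, since for a metrizable uniformity the inequalities $l(\mathcal{U})\leq w(X)\leq w(\mathcal{U})\cdot l(\mathcal{U})$ force $l(\mathcal{U})=w(X)$; thus $G$ has the same topological weight as $X$.

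The individual steps are short because the explicit descriptions in Theorems~\ref{thm:nafin} and~\ref{thm:desna} do the heavy lifting; the point requiring genuine care is the coordination of the three uniform invariants $\chi$, $l^u$, and $w$ through the uniform embedding of $X$. The main obstacle I anticipate is verifying cleanly that the uniformity induced on the generating set $i(X)$ is \emph{exactly} $\mathcal{U}$ — not merely comparable — so that Lemma~\ref{l:UnLind}.3 delivers the identity $l^u(G)=l(\mathcal{U})$ rather than only an inequality, and in confirming that the three cases can indeed be subsumed under the same argument via their respective bases at the identity.
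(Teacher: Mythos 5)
Your proposal is correct and follows essentially the same route as the paper: the explicit identity bases from Theorems~\ref{thm:nafin}.2 and~\ref{thm:desna} give $\chi(G)=w(\mathcal{U})$, Lemma~\ref{l:UnLind} (parts 1 and 3, via the uniform embedding of the generating set $i(X)$) gives $w(G)=w(\mathcal{U})\cdot l(\mathcal{U})$, and Lemma~\ref{l:MetrBal} handles the ultra-metric case. The only difference is that you spell out the reverse inequality $w(\mathcal{U})\le\chi(G)$ and the passage from an invariant ultra-metric to an ultra-norm, which the paper leaves implicit.
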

\begin{proof} (1)
One may assume in Theorems \ref{thm:nafin}.2 and \ref{thm:desna} that
$\mathcal{B}\subseteq Eq(\mathcal U)$ is a  base of ${\mathcal U}$
of  cardinality $w({\mathcal U})$. This explains $\chi (G)=w({\mathcal U})$.
Since $l^u(G)=l({\mathcal U})$ (Lemma \ref{l:UnLind}.3) we can conclude by Lemma \ref{l:UnLind}.1
that $w(G)=w({\mathcal U}) \cdot l({\mathcal U})$.

(2) Combine (1) and Lemma \ref{l:MetrBal}.
\end{proof}

\begin{remark} \label{rem:pests}
\ben
\item Note that Pestov  showed (see \cite{pes85})  that the set
$\{[(V_{\psi_{n}})]\},$ where $\{\psi_{n}\}$ extends over the family of
all possible sequences of elements from ${\mathcal U}^{F(X)},$ is a
base of $N_{e}(F(X,{\mathcal U})).$ \item Considering only the sequences
of constant functions,  we obtain the set $\{[(\tilde{\eps}_n)]:\
\eps_n\in  \mathcal U \ \forall n\in \N \}$
 which is a base of $N_{e}(F^b(X,{\mathcal U}))$
by Theorem \ref{thm:desbal}.\item Let $(X,{\mathcal U})$ be a
non-archimedean uniform space. One may take in $(1)$ only the constant
sequences and obtain the set $\{[\mathcal{V}_{\psi}]:\ \psi\in
\mathcal{U}^{F(X)}\}$ which is a base of $N_{e}(F {\sna})$ by Theorem \ref{thm:nafin}.1.
\item If, in addition, the functions $\psi$ are all constant we
obtain a base of $N_{e}(F^b_ {\sna})$ (see Theorem
\ref{thm:nafin}.2).
\item $\mathbf{NA}$ is closed under products and subgroups.
So $\mathbf{NA}$ is a \emph{reflective subcategory} (see for example, \cite[Section 9]{SS}) of $\mathbf{TGr}$.
For every topological group $G$ there exists a universal arrow $f: G \to r_{\sna}(G)$, where $r_{\sna}(G) \in \mathbf{NA}$.
For every uniform space $(X,\mathcal{U})$ the group $F_{\sna}(X,\mathcal{U})$ is in fact $r_{\sna}(G)$, where $G:=F(X,\mathcal{U})$.
\een
\end{remark}

\subsection{Noncompleteness of $A_{\sna}(X,{\mathcal U})$}

In this subsection we show  that $A_{\sna}(X,{\mathcal U})$ is never
complete for non-discrete $\mathcal{U}$. \begin{defi}\ben  \item Let
$w=\sum_{i=1}^n k_ix_i$ be a nonzero element of $A(X),$  where $n\in
\N,$ and  for all $1\leq i\leq n: \ x_i\in X$ and $ k_i\in
\Z\setminus \{0\}.$ Define the length of $w$ to be $\sum_{i=1}^n
|k_i|$ and denote it by $lh(w).$
\item The length of the zero element is  $0.$
\item For a non-negative integer $n$ we denote by $B_n$ the subset of $A(X)$
consisting of all words of length $\leq n.$\een
\end{defi}

\begin{lemma} \label{lem:fbaire}
For every $n\in \N$ the set $B_n$ is closed in $A_{\sna}.$
\end{lemma}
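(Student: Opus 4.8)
The plan is to show that $A(X) \setminus B_n$ is open by producing, for each $w$ with $lh(w) > n$, a basic open neighborhood of $w$ that misses $B_n$. By Theorem \ref{thm:desna}.1 a base of $N_0(A_{\sna})$ is given by the open subgroups $<\eps>$ with $\eps \in \mathcal{B}$, so the basic neighborhoods of $w$ are the cosets $w + <\eps>$; the whole argument will then come down to choosing a single sufficiently fine $\eps$.

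First I would record the quotient description of these subgroups. For $\eps \in Eq(\mathcal{U})$ the map $f_\eps: X \to X/\eps$ induces an onto homomorphism $\overline{f_\eps}: A(X) \to A(X/\eps)$, and exactly as in the free case (Lemma \ref{l:eps}) one checks $\ker \overline{f_\eps} = <\eps>$: the generators $x - y$ with $(x,y)\in\eps$ are killed, and conversely any kernel element has, within each $\eps$-class, coefficients summing to $0$ and hence lies in $<\eps>$. Consequently $w + <\eps> = \overline{f_\eps}^{-1}(\overline{f_\eps}(w))$, so membership in this coset is detected entirely by the value $\overline{f_\eps}(w) \in A(X/\eps)$.

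The quantitative heart of the proof is that $\overline{f_\eps}$ does not increase length. Indeed, writing $u = \sum_j l_j y_j$ in reduced form, the coefficient of a class $c$ in $\overline{f_\eps}(u)$ is $\sum_{j:\, [y_j]_\eps = c} l_j$, so $lh(\overline{f_\eps}(u)) = \sum_c \big| \sum_{j:\, [y_j]_\eps = c} l_j \big| \leq \sum_j |l_j| = lh(u)$ by the triangle inequality. Now fix $w = \sum_{i=1}^k k_i x_i$ in reduced form with $lh(w) > n$. Since $(X,\mathcal{U})$ is Hausdorff and non-archimedean, I would choose $\eps \in \mathcal{B}$ with $(x_i,x_j) \notin \eps$ for all $i \neq j$; then the classes $[x_i]_\eps$ are pairwise distinct, so $\overline{f_\eps}(w) = \sum_{i=1}^k k_i [x_i]_\eps$ is itself reduced and $lh(\overline{f_\eps}(w)) = lh(w) > n$. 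If some $u \in w + <\eps>$ belonged to $B_n$, then $\overline{f_\eps}(u) = \overline{f_\eps}(w)$ while $lh(\overline{f_\eps}(u)) \leq lh(u) \leq n$, forcing $lh(\overline{f_\eps}(w)) \leq n$, a contradiction. Hence $w + <\eps>$ avoids $B_n$, and $B_n$ is closed.

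The only steps needing genuine care --- and thus the natural candidate for the main obstacle --- are the kernel identification $\ker \overline{f_\eps} = <\eps>$ and the length inequality; both are elementary (the first an abelian analogue of Lemma \ref{l:eps}, the second a one-line estimate). I emphasize that, unlike the full Pestov-style description, no sequences or iterated products are required here: because in the non-archimedean abelian setting the basic neighborhoods are honest open subgroups, a single fine enough $\eps$ separates $w$ from all of $B_n$ at once.
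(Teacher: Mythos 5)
Your proof is correct. At the top level it follows the same strategy as the paper: fix $w$ with $lh(w)>n$, choose $\eps\in Eq(\mathcal U)$ separating the (finitely many) points of $supp(w)$, and show that the basic neighborhood $w+<\eps>$ given by Theorem \ref{thm:desna}.1 misses $B_n$. Where you genuinely diverge is in how that last step is verified. The paper argues directly on lengths one generator at a time, claiming that adding a single $x-y$ with $(x,y)\in\eps$ yields $lh(w+(x-y))\in\{lh(w),lh(w)+1\}$ and then concluding; as written this is both slightly imprecise (when $x,y\notin supp(w)$ the length jumps by $2$) and leaves implicit the induction over several generators, which is delicate because the support grows and the separating property of $\eps$ was only arranged for the original $supp(w)$. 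Your route --- identifying $<\eps>$ as $\ker\overline{f_\eps}$ for the induced homomorphism $\overline{f_\eps}\colon A(X)\to A(X/\eps)$ (the abelian analogue of Lemma \ref{l:eps}), observing that $\overline{f_\eps}$ is length-nonincreasing, and that $lh(\overline{f_\eps}(w))=lh(w)$ when $\eps$ separates $supp(w)$ --- handles all elements of the coset $w+<\eps>$ simultaneously and sidesteps the bookkeeping entirely. It also meshes nicely with the paper's own machinery (the weak-topology description via the quotients $X/\eps$ in Theorem \ref{thm:nafin}.2 and the Remark on inverse limits), so your argument is, if anything, the cleaner and more robust of the two.
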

\begin{proof}
It suffices to show that for every word $w$ of length
 $>n$ there exists $\eps\in {\mathcal U}$ such
that $w+<\eps> \cap B_n=\emptyset.$ Since ${\mathcal U}$ is
Hausdorff there exists $\eps\in \mathcal U$ such that for every
$x\neq y \in supp(w)$ we have $(x,y)\notin \eps.$ It follows that
for every $(x,y)\in \eps $ we have either $lh(w+(x-y))=lh(w)$ or
 $lh(w+(x-y))=lh(w)+1.$ Therefore, $w+<\eps> \cap
B_n=\emptyset.$
\end{proof}

\begin{lemma} \label{lem:sbaire}
Let $(X,\mathcal{U})$ be a non-archimedean  non-discrete uniform
space. Then, for every $n\in \N, \ int(B_n)=\emptyset.$
\end{lemma}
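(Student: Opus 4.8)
The plan is to use the explicit description of the zero-neighborhood filter of $A_{\sna}$ furnished by Theorem \ref{thm:desna}.1, combine it with the non-discreteness of $\mathcal U$, and then exhibit inside every basic neighborhood of every point a word of arbitrarily large length. This immediately prevents any point from being interior to $B_n$.

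First I would recall that, by Theorem \ref{thm:desna}.1, the subgroups $<\eps>$ generated by $\{x-y : (x,y)\in \eps\}$, with $\eps$ ranging over a base $\mathcal{B}\subseteq Eq(\mathcal U)$ of $\mathcal U$, form a base of $N_0(A_{\sna})$. Hence the cosets $w+<\eps>$ form a neighborhood base at an arbitrary point $w\in A(X)$. To establish $int(B_n)=\emptyset$ it therefore suffices to show that no such basic neighborhood is contained in $B_n$; that is, each $w+<\eps>$ contains a word of length $>n$.

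Fix $w\in A(X)$, $\eps\in\mathcal{B}$ and $n\in\N$. Since $\mathcal U$ is non-discrete we have $\Delta\notin\mathcal U$, so the equivalence relation $\eps$ properly contains the diagonal; thus I may choose distinct $a,b\in X$ with $(a,b)\in\eps$. Then $a-b$ is a nonzero element of $<\eps>$, whence $k(a-b)\in <\eps>$ and $w+k(a-b)\in w+<\eps>$ for every $k\in\N$. Let $c_a,c_b\in\Z$ be the coefficients of $a$ and $b$ in the reduced form of $w$. In $w+k(a-b)$ the coefficient of $a$ equals $c_a+k$ and that of $b$ equals $c_b-k$, so $lh(w+k(a-b))\geq |c_a+k|+|c_b-k|\to\infty$ as $k\to\infty$. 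In particular there is $k$ with $lh(w+k(a-b))>n$, and this element lies in $w+<\eps>\setminus B_n$.

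Consequently no basic neighborhood of $w$ is contained in $B_n$, so $w\notin int(B_n)$; as $w$ was arbitrary, $int(B_n)=\emptyset$. I do not expect a genuine obstacle here: the only point requiring care is the elementary verification that adding a large multiple of the fixed nonzero off-diagonal generator $a-b$ (available precisely because $\mathcal U$ is non-discrete) forces the length past any prescribed bound, and the coefficient computation above shows this holds uniformly, regardless of whether $a$ or $b$ already occurs in the support of $w$.
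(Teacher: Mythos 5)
Your proof is correct and follows essentially the same route as the paper: both invoke Theorem \ref{thm:desna}.1 to reduce the claim to showing that every coset $w+\langle\eps\rangle$ contains words of length $>n$, and both manufacture such words from an off-diagonal pair in $\eps$ supplied by non-discreteness. Your variant of adding a large multiple $k(a-b)$ of a single generator is in fact marginally cleaner, since it reaches arbitrarily large length in one step and avoids any case analysis on $supp(w)$, whereas the paper's single increment $lh(w+x-y)=lh(w)+2$ strictly speaking must be iterated within the same coset to push the length past $n$.
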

\begin{proof}
Let $w\in B_n.$ Since  $(X,\mathcal{U})$ is non-discrete and
Hausdorff, every symmetric entourage $\eps\in \mathcal{U}$
contains infinitely many elements of the form $(x,y)$, where $x\neq
y.$ It follows that there exists $(x,y)\in \eps$ such that
$x\notin supp(w).$  Now, if $y\notin supp(w)$ then
$lh(w+x-y)=lh(w)+2.$ Otherwise, we have $lh(w+x-y)=lh(w)+2$ or
$lh(w+y-x)=lh(w)+2,$ and either one of these cases implies that $w+<\eps>\nsubseteq B_n.$
Therefore, by Theorem \ref{thm:desna}.1, $int(B_n)=\emptyset.$
\end{proof}

\begin{thm}
Let $(X,{\mathcal U})$ be a non-archimedean metrizable non-discrete
uniform space. Then $A_{\sna}$ is not complete.
\end{thm}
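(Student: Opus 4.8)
The plan is to obtain a contradiction with the Baire category theorem. First I would record that $A_{\sna}$ is metrizable: since $\mathcal{U}$ is a metrizable non-archimedean uniformity it is generated by a single ultra-metric, and then Theorem \ref{t:metrizability}.2 makes $A_{\sna}$ ultra-normable, hence metrizable.

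Next I would argue by contradiction, assuming $A_{\sna}$ to be complete. Because $A_{\sna}$ is abelian it is balanced, so its two-sided uniformity is just the single group uniformity; this uniformity is metrizable, so completeness furnishes a compatible complete metric on $A_{\sna}$. The Baire category theorem then shows that $A_{\sna}$ is a Baire space.

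The contradiction comes from covering $A_{\sna}$ by nowhere dense closed sets. By Theorem \ref{thm:fnag}.2 the group $A_{\sna}$ is algebraically the free abelian group $A(X)$, so every element has finite length and therefore $A_{\sna} = \bigcup_{n\in\N} B_n$. Lemma \ref{lem:fbaire} gives that each $B_n$ is closed, while Lemma \ref{lem:sbaire} --- which is exactly where the hypothesis that $\mathcal{U}$ is non-discrete enters --- gives $int(B_n)=\emptyset$. Hence $A_{\sna}$ is a countable union of closed nowhere dense subsets, i.e. meager in itself, contradicting the Baire property established above. Consequently $A_{\sna}$ cannot be complete.

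The bulk of the work has already been done in the preceding lemmas, so the only delicate point I expect is the implication \emph{complete topological group} $\Rightarrow$ \emph{Baire space}: it rests on the fact that, for a metrizable abelian group, completeness in the two-sided uniformity is genuine metric completeness, which makes the space completely metrizable and therefore Baire. The substantive content --- that the closed length-balls $B_n$ are nowhere dense --- is precisely what Lemmas \ref{lem:fbaire} and \ref{lem:sbaire} supply, and it is the real engine of the argument.
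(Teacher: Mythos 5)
Your proposal is correct and follows the paper's own argument exactly: metrizability via Theorem \ref{t:metrizability}, the decomposition $A_{\sna}=\bigcup_{n\in\N}B_n$ into the closed nowhere dense length-balls of Lemmas \ref{lem:fbaire} and \ref{lem:sbaire}, and the Baire category contradiction. The extra care you take over the step ``complete metrizable group $\Rightarrow$ Baire'' is a sound elaboration of what the paper leaves implicit.
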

\begin{proof}
%
 By Theorem \ref{t:metrizability} $A_{\sna}$ is
metrizable. This group is indeed non-complete. Otherwise, by
 Baire Category Theorem $A_{\sna}$ is not the countable union of nowhere-dense closed
 sets. This contradicts the fact that $A_{\sna}=\cup_{n\in \N}
 B_n,$ where the
 sets $B_n$ are nowhere-dense and closed (see Lemmas  \ref{lem:fbaire} and
 \ref{lem:sbaire}).
\end{proof}

As a contrast recall that $A(X,{\mathcal U})$ is complete for every complete uniform space $(X,{\mathcal U})$
(which for ${\mathcal U}={\mathcal U_{max}}$ gives Tkachenko-Uspenskij theorem). See \cite[page 497]{AT}.


\section{Free profinite groups} \label{s:pro}

The free profinite groups (in several subclasses $\Omega$ of $\textbf{Pro}$) play a
major role
in several applications 
\cite{GL,RZ,FJ}.
By Lemma \ref{l:compl} the free profinite group $F_{\scriptscriptstyle Pro}$ 
 can be identified with the completion $\widehat{F^{\scriptscriptstyle Prec}_{\sna}}$
 of the free precompact $\mathbf{NA}$ group
$F^{\scriptscriptstyle Prec}_{\sna}$. Its description comes from the following result which for Stone spaces is a version of a known result in the theory of profinite groups.
See for example \cite[Prop. 3.3.2]{RZ}.


\begin{thm} \label{t:ProfCard}
Let $(X,{\mathcal U})$ be a non-archimedean precompact uniform space
and let
$\mathcal{B}\subseteq Eq(\mathcal U)$ be a
base of ${\mathcal U}$ of  cardinality $w({\mathcal U})$. Then:
\ben
\item The set
$ S:=\{H \lhd F(X): \ [F(X):H] < \infty, \ \exists \ \eps\in
\mathcal{B} \ \ \ [\tilde{\eps}]\subseteq H  \}
$
is a local base at the identity of $F^{\scriptscriptstyle Prec}_{\sna}.$
\item Let
$ G \in \{F_{\scriptscriptstyle Pro},
F^{\scriptscriptstyle Prec}_{\sna}\}.
$
Then $\chi(G)=w(G)=w({\mathcal U})=w(X)$.
In particular, $G$ is metrizable for every metrizable ${\mathcal U}$. 


 \een
\end{thm}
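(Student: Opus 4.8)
The plan is to recognize part (1) as the canonical local base produced by the final--topology machinery specialized to $\Omega = \mathbf{NA}\cap\mathbf{Prec}$, and to obtain part (2) by a cardinality count on that base together with Lemma \ref{l:UnLind}. First I would record that every group in $\mathbf{NA}\cap\mathbf{Prec}$ is balanced: an open subgroup of a precompact group has finite index, so its normal core is an open \emph{normal} subgroup of finite index, giving a base of open normal subgroups. Hence the hypotheses of Lemma \ref{l:dar}.2 are met. By Lemma \ref{lem:one}.2 the topology of $F^{\scriptscriptstyle Prec}_{\sna}$ is $\langle \mathcal{F}\rangle_{\Omega}$ with $\mathcal{F}=\{j_{2}(\eps): \eps\in \mathcal{B}\}$. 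Lemma \ref{l:dar}.2 then replaces $\mathcal{F}$ by the conjugation--saturated filter base $\mathcal{M}$, which by Remark \ref{rem:sym} equals $\{\tilde{\eps}: \eps\in\mathcal{B}\}$ and satisfies the strong invariance property $(a^{*})$. Feeding $\alpha=\mathcal{M}$ into the precompact case Lemma \ref{lem:fin}.3 yields that $\{N\lhd F(X): [F(X):N]<\infty,\ \exists\, \eps\in\mathcal{B}\ [\tilde{\eps}]\subseteq N\}$ is a local base at $e$; this is exactly $S$, proving (1).

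For the character count in (2), precompactness gives $l(\mathcal{U})=\aleph_0$, and since $i(X)$ topologically generates $G$ and $i$ is a uniform embedding (Theorem \ref{thm:fnprec}.1), Lemma \ref{l:UnLind}.3 gives $l^u(G)=l(\mathcal{U})=\aleph_0$. I would then bound $|S|$: by Lemma \ref{l:eps} we have $[\tilde{\eps}]=\ker(\overline{f_{\eps}})$ and $F(X)/[\tilde{\eps}]\cong F(X/\eps)$, which is finitely generated because $X/\eps$ is finite by precompactness. A finitely generated group has only countably many finite--index subgroups, so each $\eps$ contributes at most $\aleph_0$ members $N$ to $S$; choosing $|\mathcal{B}|=w(\mathcal{U})$ gives $\chi(G)\leq |S|\leq w(\mathcal{U})\cdot\aleph_0=w(\mathcal{U})$. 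Conversely $w(G)\geq w(i(X))=w(X)$, and for a precompact space $w(X)=w(\mathcal{U})$ (its completion is a compact, hence uniquely uniformizable, Stone space, and a dense subspace of a compact space has the same weight). Lemma \ref{l:UnLind}.1 gives $w(G)=\chi(G)\cdot l^u(G)=\chi(G)\cdot\aleph_0$, so combining $w(\mathcal{U})\leq w(G)=\chi(G)\cdot\aleph_0\leq w(\mathcal{U})$ forces $\chi(G)=w(G)=w(\mathcal{U})=w(X)$.

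For $G=F_{\scriptscriptstyle Pro}$ I would invoke Lemma \ref{l:compl}, which identifies it with the two--sided completion $\widehat{F^{\scriptscriptstyle Prec}_{\sna}}$. Completion preserves the character (closures of a base of open normal finite--index subgroups form a base at $e$ of the same cardinality), so $\chi(F_{\scriptscriptstyle Pro})=\chi(F^{\scriptscriptstyle Prec}_{\sna})=w(\mathcal{U})$; and since $F_{\scriptscriptstyle Pro}$ is compact its weight equals its character, giving $w(F_{\scriptscriptstyle Pro})=w(\mathcal{U})=w(X)$. Metrizability is then immediate: if $\mathcal{U}$ is metrizable then $w(\mathcal{U})=\aleph_0$, so $\chi(G)=\aleph_0$ and a first--countable topological group is metrizable by Birkhoff--Kakutani.

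The hard part will be the cardinality bookkeeping in (2), specifically confirming $|S|\leq w(\mathcal{U})$; this rests on the two structural facts that $X/\eps$ is finite (precompactness) and that a finitely generated free group has only countably many finite--index subgroups. Once these are in place, the remaining identities are routine cardinal arithmetic via Lemma \ref{l:UnLind} and the standard precompact weight equality $w(X)=w(\mathcal{U})$.
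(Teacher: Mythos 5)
Your proposal is correct and follows essentially the same route as the paper: part (1) via Lemma \ref{lem:one}.2, Lemma \ref{l:dar}.2, Remark \ref{rem:sym} and Lemma \ref{lem:fin}.3, and part (2) by counting the finite-index subgroups of the finitely generated free groups $F(X/\eps)$ through Lemma \ref{l:eps} and the Correspondence Theorem. Your added justifications (that $\mathbf{NA}\cap\mathbf{Prec}$ groups are balanced via normal cores, the explicit lower bound $w(G)\geq w(X)$, and the completion step for $F_{\scriptscriptstyle Pro}$) are details the paper leaves implicit but do not change the argument.
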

\begin{proof} $(1):$ By Lemma \ref{lem:one}.2 the topology of $F^{\scriptscriptstyle Prec}_{\sna}$  is $\langle
\mathcal{F}\rangle_{\Omega}$ where $$\mathcal{F}=\{j_{2}(\eps)| \
\eps\in \mathcal{B}\}$$ and $\Omega=\mathbf{NA} \cap \textbf{Prec}.$
According to Lemma \ref{l:dar}.2 the latter  coincides with $\langle
\mathcal{M}\rangle_{\Omega}$ where
$$\mathcal{M}:=\Big\{\bigcup_{w\in F(X)}(wAw^{-1}\cup
wA^{-1}w^{-1}):\ A\in\mathcal{F}  \Big \}.$$ In
particular,
$$N_{e}(F^{\scriptscriptstyle Prec}_{\sna})=N_{e}(F(X),\langle
\mathcal{M}\rangle_{\Omega}).$$ By the description of the sets
$\tilde{\eps}$ in Definition \ref{def:desc}.2 and  Remark \ref{rem:sym},
$\mathcal{M}=\{\tilde{\eps}: \eps\in \mathcal{B} \}$.  Finally, use Lemmas
\ref{l:dar}.2 and \ref{lem:fin}.3. \\
 $(2):$ 
 Since $U$ and $G$ are precompact we have $w({\mathcal U})=w(X)$ and $\chi(G)=w(G)$. 
So we have only to show that $\chi(G)=w({\mathcal U})$. Assume that
$|\mathcal{B}|=w(\mathcal{U})$. By the description of $S$ in $(1),$
it suffices to show that for every  $\eps\in B$ there are countably
many  normal finite-index subgroups of $F(X)$  containing
$[\tilde{\eps}]$. Consider the  function $f_{\varepsilon}:X \to
X/\varepsilon$.
  Note that $F(X/\varepsilon)$ is a free group with finite number of generators.
  It is well known that the set of all (normal) finite-index subgroups of $F(X/\eps)$ is countable.
 By the Correspondence Theorem there are countably many normal finite-index subgroups of $F(X)$ containing
   $ker(\overline{f_{\eps}}),$ where $\overline{f_{\eps}}: F(X) \to F(X/\eps)$ is the induced onto homomorphism.
Now in order to complete the proof recall that $ker(\overline{f_{\eps}})=[\tilde{\eps}]$ by Lemma \ref{l:eps}.
\end{proof}

Let $\Omega$ be an $\overline{S}C$-variety of groups. Following 
\cite{SS} let us say that two compact spaces $X$ and $Y$ are $\Omega$-\emph{equivalent} 
if their $\Omega$-free groups $F_{\Omega}(X)$ and $F_{\Omega}(Y)$ are topologically isomorphic. Notation: $X \cong_{\Omega} Y$.
In particular,
we have the classical concepts of $M$-equivalent (in the honor of Markov) and $A$-equivalent compact spaces
(for $\Omega=\mathbf{TGr}$ and $\Omega=\mathbf{AbGr}$, respectively).
For free compact (abelian) groups and the corresponding equivalence see \cite{HM}.

Similarly, we get the concepts of
 $\mathbf{NA}$-equivalent, $\mathbf{AbNA}$-equivalent and $\mathbf{Pro}$-equivalent compact spaces.
 The $\mathbf{Pro}$-equivalence is very rigid as the following remark demonstrates.

 \begin{remark} \label{r:Melnikov}
From  Melnikov's result (see \cite[Proposition
3.5.12]{RZ}) it follows that every free profinite group on a compact
infinite Stone space $X$ is isomorphic to the free profinite group of the
1-point compactification of a discrete space with cardinality
$w(X)$. So two infinite Stone spaces $X$ and $Y$ are
$\mathbf{Pro}$-equivalent if and only if $w(X)=w(Y)$.
This implies that there are $\mathbf{Pro}$-equivalent 
compact spaces which are not $M$ or $A$-equivalent.
\end{remark}


Note that if $X$ is the converging sequence space and $Y$ is the
Cantor set then $X \cong_{\mathbf{Pro}} Y$ by Remark
\ref{r:Melnikov}. On the other hand, $X \ncong_{\Omega} Y$, where
$\Omega =\mathbf{NA} \cap \textbf{Prec}$, because
$F^{\scriptscriptstyle Prec}_{\sna}(X)$ is countable in contrast to
$F^{\scriptscriptstyle Prec}_{\sna}(Y)$. It would be interesting to
compare $\Omega$-equivalences on Stone spaces (with the same weight
and cardinality) for different subclasses $\Omega$ of $\mathbf{NA}$.


\subsection{The Heisenberg group associated to a Stone
space and free Boolean profinite groups}\label{s:H}

To every Stone space $X$ we associate in \cite{MS1}
 the natural biadditive mapping
 \begin{equation} \label{1}
w: C(X,\Z_2) \times C(X,\Z_2)^* \to \Z_2
\end{equation}

Where $V:=C(X,\Z_2)$ can be identified with the discrete group (with
respect to  symmetric difference) of all clopen subsets in $X$.
Denote by $V^{\ast}:=\hom(V,\T)$ the Pontryagin dual of $V.$
Since $V$ is a Boolean group 
every character $V \to \T$ can be identified with a homomorphism into the unique
2-element subgroup $\Omega_2=\{1, -1\}$, a copy of $\Z_2$. The same
is true for the characters on $V^*$, hence the natural evaluation
map 
$w:V \times V^*  \to  \T$ ($w(x,f)=f(x)$) can be restricted
naturally to $V \times V^* \to \Z_2$. Under this identification
$V^{\ast}:=\hom(V,\Z_2)$ is a closed subgroup of the
compact group $\Bbb{Z}_2^{V}.$ In particular, $V^{\ast}$ is a Boolean profinite group.
Similar arguments show that, in general,
any Boolean profinite group $G$ is the Pontryagin dual of the discrete Boolean group $G^*$.

We prove in \cite{MS1} that for every
Stone space $X$ the associated Heisenberg type group
$H=(\Bbb{Z}_2\times V)\leftthreetimes V^{\ast}$ is always minimal.

This setting has some additional interesting properties. Note that
the natural evaluation 
map
$$\delta: X \to V^*, \ x \mapsto \delta_x, \ \ \ \delta_x(f)=f(x)$$
is a topological embedding into 
$V^*$, where $w(V^*)=w(X)$.
Moreover, if $X$ is a $G$-space then the induced action of $G$ on $V^*$ is continuous and $\delta$ is a $G$-embedding.

The set $\delta(X)$ separates points of $V$ via the
biadditive mapping $w$ in (\ref{1}). Hence the subgroup generated by
$\delta(X)$ in $V^*$ is dense.
  We can say more using
additional properties of Pontryagin duality. We thank M. Jibladze
and D. Pataraya for the following observation (presented here after
some simplifications).

\begin{remark} \label{Stone duality} (M. Jibladze and D. Pataraya)
The Boolean profinite group $V^*$ together with $\delta: X \to V^*$ in fact is the
free Boolean profinite group $B_{\scriptscriptstyle Pro}(X)$ of $X$. In order to see this let
$f: X \to G$ be a continuous homomorphism into a Boolean profinite
group $G$. Then $G$
is the Pontryagin dual of a discrete Boolean group $H$. That is,
$G=H^*$. Now consider the natural inclusion
$$\nu: G^*=\hom(G,\Z_2) \hookrightarrow V=C(X,\Z_2).$$ Its dual arrow
$\nu^*: V^* \to G^{**}$ can be identified with $\nu^*: V^* \to
G=G^{**}$ such that $\nu^* \circ \delta =f$. Such extension $\nu^*$
of $f$ is uniquely defined because the subgroup generated by
$\delta(X) \subset V^*$ is dense.
\end{remark}

\section{Surjectively universal groups}
\label{s:co-un}

We already proved in Theorem \ref{t:metrizability} that for metrizable uniformities the corresponding free balanced, free abelian and free Boolean non-archimedean groups
 are also metrizable. The same is true by Theorem \ref{t:ProfCard} for the free profinite group which can be treated as the free compact $\mathbf{NA}$ group over a uniform space.
These results 
 allow us to unify
 and strengthen some old and recent results about the existence and the structure of surjectively universal
 $\mathbf{NA}$ groups.

Let $\Omega$ be a class of
topological groups. We say that a topological group $G$ is
\emph{surjectively universal} (or, \emph{co-universal})
in the class $\Omega$ if
$G \in \Omega$ and every $H \in \Omega$ is isomorphic to a topological factor group of $G$.

\begin{remark} \label{rem:co-un}
We list some natural classes $\Omega$ containing surjectively universal groups:
\ben
\item (Ding \cite{Ding}) Polish groups. This result answers a long standing question of Kechris. 
\item (Shakhmatov-Pelant-Watson\cite{SPW})

\ben \item The class of all abelian Polish groups. More generally the class of all abelian complete groups with weight $\leq \kappa$.

\item The class of all balanced metrizable complete groups with weight $\leq \kappa$.\een

\item (See Gao \cite{GAO} for (a),(b),(c) and also Gao-Xuan \cite{Gao-Xuan} for (b),(c)) \ben

\item $\mathbf{NA}$ Polish groups.

\item $\mathbf{NA}$ abelian Polish groups.

\item $\mathbf{NA}$ balanced Polish groups.\een

\item (Gildenhuys-Lim \cite[Lemma 1.11]{GL} (see also 
\cite[Thm 3.3.16]{RZ})) 
Profinite groups of weight $\leq \kappa$. \een
\end{remark}



\begin{lemma} \label{l:co-unSp}
The Baire space $B(\kappa)=(\kappa^{\aleph_0},{\mathcal U})$ is co-universal in the class of all completely
metrizable non-archimedean uniform spaces with topological weight $\leq \kappa$.
\end{lemma}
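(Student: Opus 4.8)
The plan is to realize an arbitrary member of the class as the branch space of a tree of equivalence classes, and then to spread $B(\kappa)$ onto those branches by using its coordinates to navigate the tree. First note that $B(\kappa)$ itself belongs to the class: the product uniformity on $\kappa^{\aleph_0}$ (with $\kappa$ discrete) has the decreasing base of equivalence relations $\eta_n=\{(x,x'):x\restriction n=x'\restriction n\}$, so it is non-archimedean; it is completely metrizable via the complete ultra-metric $d(x,x')=2^{-\min\{n:\,x(n)\neq x'(n)\}}$; and $w(B(\kappa))=\kappa$.

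Now fix a completely metrizable non-archimedean space $Y$ with $w(Y)\le\kappa$. Since $Y$ is metrizable and non-archimedean, choose a decreasing base $\eps_0\supseteq\eps_1\supseteq\cdots$ of $\mathcal U_Y$ by equivalence relations with $\eps_0=Y\times Y$ and $\bigcap_n\eps_n=\Delta$. Because $w(Y)\le\kappa$ forces $l(\mathcal U_Y)\le w(Y)\le\kappa$, the remark on bases in Section~\ref{s:prel} lets me take each partition $\eps_n$ to consist of at most $\kappa$ classes; write $P_n:=Y/\eps_n$. The refinements $\eps_{n+1}\subseteq\eps_n$ give bonding maps $P_{n+1}\to P_n$, and each point $y$ determines the thread $([y]_{\eps_0},[y]_{\eps_1},\dots)\in\underleftarrow{\lim}\,P_n$. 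Hausdorffness makes this injective, and here completeness is essential: given any nested sequence of nonempty classes $C_0\supseteq C_1\supseteq\cdots$, choosing $y_n\in C_n$ yields a Cauchy sequence whose limit lies in every (clopen, hence closed) $C_n$, so the thread is realized. Thus $Y\cong\underleftarrow{\lim}\,P_n$, which I identify with the branch space of the tree $T$ whose level-$n$ nodes are the length-$n$ nested threads $(C_0,\dots,C_{n-1})$ of nonempty classes, ordered by end-extension.

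Each node of $T$ has at least one and at most $\kappa$ immediate successors, so I can recursively assign to every finite sequence $s\in\kappa^{<\N}$ a node $t(s)$ of the same length, monotone in the sense that $s\sqsubseteq s'$ implies $t(s)\sqsubseteq t(s')$, and such that for each fixed $s$ the map $\alpha\mapsto t(s^{\frown}\alpha)$ sends $\kappa$ onto the successors of $t(s)$. For $x\in\kappa^{\N}$ the nested nodes $t(x\restriction n)$ determine a branch, i.e. a point $f(x)\in Y$ (well-defined on all of $B(\kappa)$ precisely because $Y$ is complete). If $x,x'$ agree on their first $n$ coordinates then $t(x\restriction n)=t(x'\restriction n)$, so $f(x),f(x')$ lie in a common $\eps_{n-1}$-class; hence $f$ is uniformly continuous. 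Surjectivity of each successor map lets one build, for any prescribed branch, an $x$ realizing it, so $f$ is onto. Finally, for a basic cylinder $[s]=\{x:\,x\restriction n=s\}$ one checks that $f([s])$ is exactly the finest ($\eps_{n-1}$-)class of the node $t(s)$, which is clopen in $Y$; thus $f$ carries basic open sets to open sets and is an open continuous surjection, in particular a topological quotient map.

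The main obstacle is the bookkeeping of the tree/inverse-limit identification together with the observation that completeness of $Y$ is exactly what guarantees every infinite branch converges, so that $f$ is total; once this is in place, the quotient property reduces to the clean computation of $f$ on cylinders.
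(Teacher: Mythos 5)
Your argument is correct, but it takes a genuinely different and more self-contained route than the paper, which disposes of this lemma by citation. The paper's first proof invokes the Shakhmatov--Pelant--Watson theorem \cite{SPW}, which already supplies a Lipschitz-1 open surjection $B(\kappa)\to (X,d)$ for \emph{every} complete bounded metric space of weight $\leq\kappa$ (no non-archimedean hypothesis needed); its second proof first embeds $Y$ as a closed subspace of $B(\kappa)$ via the universality statement of Lemma \ref{l:UnLindNA} and then applies Ellis's extension/retraction theorem \cite{Ellis} to get a uniformly continuous retraction $B(\kappa)\to Y$, which is a quotient map by Lemma \ref{l:quot}. You instead give a direct construction: you present $Y$ as the branch space of the tree of $\eps_n$-classes (the inverse limit of the partitions $Y/\eps_n$, with completeness guaranteeing that every thread is realized and Hausdorffness giving injectivity), and then collapse $\kappa^{\aleph_0}$ onto that tree by a level-preserving, successor-surjective assignment of nodes to finite sequences --- the uniform non-archimedean analogue of the classical fact that every Polish space is an open continuous image of $\N^{\N}$. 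What your route buys is an explicit map together with the clean computation that $f$ sends the cylinder over $s$ onto the $\eps_{|s|-1}$-class attached to $t(s)$, which simultaneously yields uniform continuity ($f$ carries the basic entourage of level $n$ into $\eps_{n-1}$), openness, and hence the quotient property; what it costs is the bookkeeping the paper outsources to \cite{SPW} and \cite{Ellis}. The steps you flag as delicate all check out: each partition has at most $\kappa$ classes because $l(\mathcal{U}_Y)\leq w(Y)\leq\kappa$, the classes are clopen so Cauchy limits stay inside them, and surjectivity of the successor maps lifts every branch of the tree to a point of $\kappa^{\aleph_0}$.
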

\begin{proof}
(First proof)
By Shakhmatov-Pelant-Watson \cite{SPW} there exists a Lipshitz-1 onto open (hence, quotient)
map $B(\kappa) \to (X,d)$ for every complete bounded metric space $(X,d)$ with topological weight $\leq \kappa$.

(Second proof) By Ellis \cite{Ellis} for every complete ultra-metric
space $X$ and its closed subspace $Y$ there exists a uniformly
continuous retraction $r: X \to Y$ (which necessarily is a quotient
map by Lemma \ref{l:quot}).
\end{proof}

%

\begin{lemma} \label{l:quot} (see for example \cite[Cor. 2.4.5]{Eng})
If the composition $f_2 \circ f_1: X \to Z$ of continuous maps $f_1: X \to Y$ and $f_2: Y \to Z$ is a quotient map,
then $f_2: Y \to Z$ is a quotient map.
\end{lemma}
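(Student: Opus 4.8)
The plan is to verify directly the two defining properties of a quotient map for $f_2$: surjectivity, and the characterization that a set $W \subseteq Z$ is open precisely when $f_2^{-1}(W)$ is open in $Y$. Throughout I would exploit the hypothesis that the composition $g := f_2 \circ f_1$ is a quotient map, together with the continuity of $f_1$. Writing the proof as a self-contained chain of set-theoretic observations keeps the argument transparent.

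For surjectivity: since $g$ is a quotient map it is in particular onto, so given $z \in Z$ there is some $x \in X$ with $g(x) = z$; then $f_1(x) \in Y$ satisfies $f_2(f_1(x)) = z$, whence $f_2$ is surjective. For the open-set characterization, the forward implication is immediate from continuity: if $W$ is open in $Z$ then $f_2^{-1}(W)$ is open in $Y$ because $f_2$ is continuous.

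The substantive direction is the converse. Suppose $f_2^{-1}(W)$ is open in $Y$. The key observation is the set-theoretic identity $g^{-1}(W) = f_1^{-1}(f_2^{-1}(W))$. Since $f_1$ is continuous and $f_2^{-1}(W)$ is open in $Y$, the set $f_1^{-1}(f_2^{-1}(W))$ is open in $X$, and hence $g^{-1}(W)$ is open in $X$. Because $g$ is a quotient map, openness of the preimage $g^{-1}(W)$ forces $W$ to be open in $Z$, as desired. Combining this with surjectivity gives that $f_2$ is a quotient map.

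The argument presents no real obstacle; the only point requiring care is to invoke the quotient property of the composition $g$ in exactly the right direction, namely that an open preimage under $g$ implies openness downstairs in $Z$, rather than merely the (always available) continuity of $g$. Everything else is routine, so I would keep the write-up to the short chain of implications above.
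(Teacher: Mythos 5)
Your proof is correct and is exactly the standard argument; the paper itself gives no proof of this lemma, only a citation to Engelking, and your chain of implications (surjectivity of $f_2$ from surjectivity of the composition, plus the identity $g^{-1}(W)=f_1^{-1}(f_2^{-1}(W))$ combined with the quotient property of $g$) is precisely the textbook proof being referenced. Nothing is missing.
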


  \begin{thm} \label{t:surj}
  \ben
  \item $\widehat{F^b}_{\sna}(\kappa^{\aleph_0},{\mathcal U})$ is
  surjectively universal in the class of all balanced $\mathbf{NA}$ 
  metrizable complete groups with weight $\leq \kappa$.
  \item
$\widehat{A}_{\sna}(\kappa^{\aleph_0},{\mathcal U})$ is surjectively
universal in the class of all abelian $\mathbf{NA}$ metrizable
complete groups with weight $\leq \kappa$.
\item $\widehat{B}_{\sna}(\kappa^{\aleph_0},{\mathcal U})$ is surjectively universal in the class of all
  Boolean $\mathbf{NA}$ metrizable complete groups with weight $\leq \kappa$.
\een
\end{thm}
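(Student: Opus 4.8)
The plan is to handle the three parts simultaneously, using the observation that abelian groups, and Boolean groups (which are abelian since $x^2=e$ forces commutativity), are balanced; hence in every case the target group $H$ carries a single two-sided uniformity ${\mathcal U}_{l \vee r}$ coinciding with its left and right uniformities. Fix $\Omega \in \{\mathbf{NA_b}, \mathbf{AbNA}, \mathbf{BoolNA}\}$ and set $G:=\widehat{F_{\Omega}}(\kappa^{\aleph_0},{\mathcal U})$. First I would verify that $G$ itself lies in the relevant class. The Baire space $(\kappa^{\aleph_0},{\mathcal U})$ is metrizable with $w({\mathcal U})=\aleph_0$ and $l({\mathcal U}) \leq \kappa$ (Lemma \ref{l:UnLindNA}), so Theorem \ref{t:metrizability}.1 gives that $F_{\Omega}(\kappa^{\aleph_0},{\mathcal U})$ is metrizable with $w \leq \kappa$; passing to the completion (Lemma \ref{l:compl}) preserves metrizability, the weight, and membership in $\Omega$, so $G$ is a complete metrizable group in $\Omega$ with $w(G)\leq \kappa$.

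Now let $H$ be an arbitrary complete metrizable group in $\Omega$ with $w(H)\leq \kappa$. Since $H$ is balanced and non-archimedean, Lemma \ref{l:MetrBal} shows that $(H,{\mathcal U}_{l \vee r})$ is a metrizable non-archimedean uniformity; it is complete because $H$ is complete, and its topological weight is $w(H)\leq \kappa$. Thus $(H,{\mathcal U}_{l \vee r})$ is a completely metrizable non-archimedean uniform space of weight $\leq \kappa$, and Lemma \ref{l:co-unSp} supplies a uniformly continuous surjection
\[
q \colon (\kappa^{\aleph_0},{\mathcal U}) \to (H,{\mathcal U}_{l \vee r})
\]
which is a quotient map of the underlying topological spaces.

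The core of the argument is to promote $q$ to a quotient \emph{homomorphism}. Viewing $q$ as a uniformly continuous map into the group $H\in \Omega$, the universal property of $F_{\Omega}(\kappa^{\aleph_0},{\mathcal U})$ (Theorem \ref{thm:Samuel}) yields a continuous homomorphism $\Phi$ with $\Phi \circ i = q$, where $i$ denotes the canonical insertion of $\kappa^{\aleph_0}$ into the free group; since $H$ is complete, $\Phi$ extends to a continuous homomorphism $\widehat{\Phi}\colon G \to H$, and $\widehat{\Phi}\circ i = q$ persists with $i$ now regarded as a map into $G$. As $q$ is onto, so is $\widehat{\Phi}$. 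Finally $i$ and $\widehat{\Phi}$ are continuous and their composite $q$ is a topological quotient map, so Lemma \ref{l:quot} forces $\widehat{\Phi}\colon G \to H$ to be a topological quotient map. For topological groups this means $H$ is topologically isomorphic to the factor group $G/\ker \widehat{\Phi}$, which is precisely the asserted surjective universality.

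The one point I expect to be the crux is the passage from a map of uniform \emph{spaces} to a map of \emph{groups} that remains a quotient, since there is no a priori reason for $\widehat{\Phi}$ to be open. The elegant resolution is that the free-group insertion $i$ plays the role of $f_1$ in Lemma \ref{l:quot}: because $q$ is \emph{already} a topological quotient map and factors through $i$, the openness of $\widehat{\Phi}$ is obtained for free, with no separate computation. Everything else is bookkeeping — checking via Theorem \ref{t:metrizability} and Lemma \ref{l:compl} that the free-group construction and completion keep us inside $\Omega$ with the correct weight, and recalling that the abelian and Boolean cases are balanced so that the single uniformity ${\mathcal U}_{l \vee r}$ is available throughout.
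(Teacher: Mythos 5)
Your proposal is correct and follows essentially the same route as the paper: the co-universality of the Baire space (Lemma \ref{l:co-unSp}), the universal property of the completed free group (Lemma \ref{l:compl}), and Lemma \ref{l:quot} to transfer the quotient property from the uniform surjection to the induced homomorphism. The only cosmetic difference is that you treat the three cases uniformly by noting that abelian and Boolean groups are balanced, whereas the paper writes out the balanced case and declares the others similar.
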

\begin{proof} We give a proof only for (1) because cases (2) and (3) are very similar.

By  Theorem \ref{t:metrizability},
${F^{b}}_{\sna}:={F^{b}}_{\sna}(\kappa^{\aleph_0},{\mathcal U})$ is metrizable. Hence, $\widehat{{F^{b}}}_{\sna}$ is metrizable, too.
Furthermore, the topological weight of $\widehat{{F^{b}}}_{\sna}$ is $\kappa$. Indeed,
$\kappa^{\aleph_0}$ topologically generates $\widehat{{F^{b}}}_{\sna}$. So, by Lemma \ref{l:UnLind}, we get
$$w(\widehat{{F^{b}}}_{\sna})=\chi(\widehat{{F^{b}}}_{\sna}) \cdot
l^u(\widehat{{F^{b}}}_{\sna}) = \aleph_0 \cdot l^u(\kappa^{\aleph_0},{\mathcal U})=\kappa.$$
Let $P$ be a
balanced $\mathbf{NA}$ metrizable complete group with topological weight $\leq \kappa$.
Then its uniformity $\rho$ is both complete (by definition) and non-archimedean (by Fact \ref{t:condit}.2).
By Lemma \ref{l:co-unSp} there exists a uniformly
continuous onto map $f: \kappa^{\aleph_0} \to (P,\rho)$ which is a
quotient map of topological spaces.
By the universal property of $\widehat{{F^{b}}}_{\sna}$ (Lemma \ref{l:compl})
 there exists a unique continuous homomorphism
$\widehat{f}: \widehat{{F^{b}}}_{\sna} \to P$ which extends $f$. 
That is, $f=\widehat{f} \circ i$, where $i: \kappa^{\aleph_0} \to \widehat{{F^{b}}}_{\sna}$ is the universal arrow.
Since $f: \kappa^{\aleph_0} \to P$ is a quotient map we obtain by
Lemma \ref{l:quot} that $\widehat{f}$ is a quotient map.
\end{proof}

If in (1) and (2) we assume that $\kappa=\aleph_0$ then we  in
fact deal with Polish groups. In this case we get very short proofs of the results mentioned
in Remark \ref{rem:co-un} (assertions 3.b and 3.c). A new
point in this particular case is that the corresponding universal
groups come directly as the 
free objects. Indeed, recall that by Lemma \ref{l:compl}
the complete groups $\widehat{F^b}_{\sna}, \widehat{A}_{\sna}, \widehat{B}_{\sna}$ in Theorem \ref{t:surj} are $\Omega_C$-free groups
for the corresponding classes.

Result  
(3) in Theorem \ref{t:surj} seems to be new.

\begin{quest} \label{3}
Let $\kappa > \omega$.
Is it true that there exists a \textbf{co-universal} space in the class of all complete non-archimedean uniform spaces
with $dw(X,{\mathcal U}) \leq  (\kappa, \kappa)$ ?
\end{quest}

 A positive solution for Question \ref{3} will imply, by Theorem \ref{t:metrizability} and the
approach of Theorem \ref{t:surj}, that there exists a co-universal group in the class of all non-archimedean balanced (abelian,
Boolean) groups with topological weight $\leq \kappa$.

The following theorem  is known in the theory of profinite groups. Here we provide a very short proof of the existence
of surjectively universal profinite groups of weight $\leq \mathfrak{m}$ using Hulanicki's theorem.
The first assertion can be derived from \cite[Lemma
1.11]{GL} (or \cite[Thm 3.3.16]{RZ}). Its version
for the case $\mathfrak{m}=\aleph_0$ goes back to Iwasawa. This case
was proved also in \cite{Gao-Xuan}. 

\begin{thm} \label{t:surjBIG}
\ben
\item 
For every infinite cardinal $\mathfrak{m}$ there exists a
surjectively universal group 
in the class $\mathbf{Pro}_{\mathfrak{m}}$ of all profinite groups of 
weight $\leq \mathfrak{m}$.
\item 
Every free profinite group $F_{\scriptscriptstyle Pro}(X)$ over
any infinite Stone space $X$ of weight $\mathfrak{m}$ is a surjectively universal group
in the class $\mathbf{Pro}_{\mathfrak{m}}$.
\een
\end{thm}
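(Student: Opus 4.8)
The plan is to reduce everything to the single, maximally convenient Stone space $X=\{0,1\}^{\mathfrak{m}}$ and then to exploit two inputs: the rigidity of $\mathbf{Pro}$-equivalence recorded in Remark \ref{r:Melnikov}, and the fact that the underlying space of a profinite group is a continuous image of a Cantor cube (Hulanicki's theorem). First, note that assertion (1) is an immediate consequence of (2): the group produced in (2) lies in $\mathbf{Pro}_{\mathfrak{m}}$ because $w(F_{\scriptscriptstyle Pro}(X))=w(X)=\mathfrak{m}$ by Theorem \ref{t:ProfCard}.2. So I would concentrate on (2). Moreover, by Melnikov's result in Remark \ref{r:Melnikov} all free profinite groups over infinite Stone spaces of the same weight $\mathfrak{m}$ are topologically isomorphic; hence it suffices to verify surjective universality for the one space $X=\{0,1\}^{\mathfrak{m}}$, which is an infinite Stone space of weight $\mathfrak{m}$.

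So fix $X=\{0,1\}^{\mathfrak{m}}$ and let $P$ be an arbitrary profinite group with $w(P)\le\mathfrak{m}$. The first step is to produce a continuous surjection of topological spaces $\phi\colon X\to P$. This is exactly where Hulanicki's theorem enters: the underlying space of the compact group $P$ is dyadic, and a dyadic space of weight $\le\mathfrak{m}$ is a continuous image of $\{0,1\}^{\mathfrak{m}}$, which yields the desired onto map $\phi$. Since $X$ is compact, $\phi$ is automatically uniformly continuous, so the universal property of $F_{\scriptscriptstyle Pro}(X)$ (Theorem \ref{thm:Samuel}) produces a continuous homomorphism $\widehat{\phi}\colon F_{\scriptscriptstyle Pro}(X)\to P$ with $\widehat{\phi}\circ i=\phi$, where $i\colon X\to F_{\scriptscriptstyle Pro}(X)$ is the universal arrow. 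Because $\phi=\widehat{\phi}\circ i$ is already onto, so is $\widehat{\phi}$.

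It remains to check that $\widehat{\phi}$ is a \emph{topological} quotient map and not merely an algebraic surjection; here the compactness of profinite groups does the work for free. Indeed $F_{\scriptscriptstyle Pro}(X)$ is profinite, hence compact, and a continuous surjective homomorphism from a compact group onto a Hausdorff topological group is automatically open (equivalently, closed), so it is a quotient map. Thus every $P\in\mathbf{Pro}_{\mathfrak{m}}$ is a topological factor group of $F_{\scriptscriptstyle Pro}(\{0,1\}^{\mathfrak{m}})$, which proves (2) for this particular $X$ and, via Remark \ref{r:Melnikov}, for every infinite Stone space of weight $\mathfrak{m}$; assertion (1) then follows.

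I expect the only genuinely nontrivial ingredient to be the space-level surjection $\{0,1\}^{\mathfrak{m}}\to P$, which must be invoked (Hulanicki) rather than computed; everything else is formal, namely the universal property and the automatic passage from onto to quotient via compactness. It is worth stressing \emph{why} the naive argument cannot be run over an arbitrary weight-$\mathfrak{m}$ Stone space directly: over, say, the one-point compactification $\alpha D$ of a discrete set of cardinality $\mathfrak{m}$, no continuous map into $P$ can be onto, since its image is a convergent $\mathfrak{m}$-indexed family; there one would instead be forced to construct a topologically generating family converging to $e$, which is the classical and more delicate route of Iwasawa and Gildenhuys--Lim. Choosing $X=\{0,1\}^{\mathfrak{m}}$ and transferring by Melnikov's rigidity is precisely what makes the proof short.
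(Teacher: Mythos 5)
Your proposal is correct and follows essentially the same route as the paper: fix $X=\{0,1\}^{\mathfrak{m}}$, invoke Hulanicki's theorem to obtain a continuous surjection $X\to P$ at the level of spaces, lift it through the universal property of $F_{\scriptscriptstyle Pro}(X)$, and transfer to arbitrary infinite Stone spaces of weight $\mathfrak{m}$ via Melnikov's rigidity (Remark \ref{r:Melnikov}). Your explicit remark that the surjective homomorphism is automatically a quotient map by compactness is a point the paper leaves implicit, but it is not a different argument.
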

\begin{proof} (1) Let $X=\{0,1\}^{\mathfrak{m}}$.
By Theorem \ref{t:ProfCard}.3,
$w(F_{\scriptscriptstyle Pro}(X))=w(X).$
So, $F_{\scriptscriptstyle Pro}(X) \in \mathbf{Pro}_{\mathfrak{m}}$.
By the universal property of $F_{\scriptscriptstyle Pro}(X) $
it is enough to show that any $G \in \mathbf{Pro}_{\mathfrak{m}}$ is a continuous image of $X$.
It is obvious for finite $G$.
By a theorem of Hulanicki (see \cite[Thm 9.15]{HR}) every infinite
$G \in \mathbf{Pro}_{\mathfrak{m}}$ is homeomorphic to $\{0,1\}^{\chi(G)}=\{0,1\}^{w(G)}$.
Since $w(G) \leq {\mathfrak{m}}$, there exists a continuous onto map
$\phi: \{0,1\}^{\mathfrak{m}} \to G$.

(2) See Remark \ref{r:Melnikov}.
\end{proof}


\section{Automorphizable actions and epimorphisms in topological groups}
\label{s:aut} Resolving a longstanding principal problem introduced by K.
Hofmann, Uspenskij \cite{Us-epic} showed  that in the category of
Hausdorff topological groups epimorphisms need not have a dense
range. Dikranjan and Tholen  \cite{DT}  gave a rather direct proof
of this
important result of Uspenskij. Pestov gave 
a useful criterion \cite{Pest-epic, pest-wh} (Fact \ref{pest}) which
we use below in Theorem \ref{epic=dense}. This test is closely
related to the concept of the \emph{free topological $G$-group}
 of a (uniform) $G$-space $X$ introduced in \cite{Me-F}. We denote it by $F_G(X)$.
It is a natural $G$-space version of the usual free
topological group. Similarly to Definition \ref{d:FreeGr} one may define
\emph{$\Omega$-free (uniform) $G$-group} $F_{G,\Omega}(X,\mathcal{U})$.

A topological (uniform) $G$-space $X$ is said to
be \emph{automorphizable} if $X$ is a topological (uniform)
$G$-subspace of a $G$-group $Y$ (with its two-sided uniform structure).
Equivalently, if the universal morphism $X \to F_G(X)$ of $X$ into
the free topological (uniform) $G$-group $F_G(X)$ of the (uniform)
$G$-space $X$ is an embedding.

\begin{fact} \label{pest} \emph{(Pestov \cite{Pest-epic, pest-wh})}
Let $f: M \to G$ be a continuous homomorphism between Hausdorff
topological groups. Denote by $X:=G/H$ the left coset $G$-space,
where
$H$ 
is the closure of the subgroup $f(M)$ in 
$G.$ The following are equivalent: \ben
\item 
$f: M \to G$ is an epimorphism.
\item 
The free topological $G$-group $F_G(X)$ of the $G$-space $X$ is
trivial.
 \een
\end{fact}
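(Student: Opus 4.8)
The plan is to translate the purely categorical notion of an epimorphism into a dynamical one and then read off both implications from the universal property of the free topological $G$-group $F_G(X)$ of \cite{Me-F}. First I would record the standard reduction. By definition, $f$ \emph{fails} to be an epimorphism exactly when there are a Hausdorff topological group $P$ and two \emph{distinct} continuous homomorphisms $g,h\colon G\to P$ with $gf=hf$. Since $f(M)$ is dense in $H=\overline{f(M)}$ and $g,h$ are continuous, the equation $gf=hf$ is equivalent to $g|_{H}=h|_{H}$. Thus $f$ is an epimorphism if and only if every pair of continuous homomorphisms out of $G$ that agree on $H$ already coincides on all of $G$.

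Next I would set up the bridge to the coset $G$-space $X=G/H$ with the left translation action, whose base coset $x_0=eH$ is fixed precisely by $H$. The key point is that uniformly continuous $G$-maps from the transitive $G$-space $X$ into a $G$-group $Y$ are governed by the $H$-fixed points of $Y$: such a map $\alpha$ is determined by $y_0:=\alpha(x_0)$ via $\alpha(sH)=s\cdot y_0$, and well-definedness forces $y_0$ to be $H$-fixed. The clean half of the correspondence is the conversion of fixed points into pairs of homomorphisms inside the semidirect product $W=Y\rtimes G$: the canonical section $\sigma(s)=(e,s)$ and its conjugate $\sigma'(s)=(y_0,e)\,\sigma(s)\,(y_0,e)^{-1}=\bigl(y_0\,(s\cdot y_0^{-1}),\,s\bigr)$ are continuous homomorphisms $G\to W$ that agree on $H$ (because $y_0$ is $H$-fixed) and satisfy $\sigma=\sigma'$ if and only if $y_0$ is a $G$-fixed point, i.e. if and only if the associated $G$-map is constant. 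This is the device that turns a nontrivial $H$-fixed point into a witnessing pair, and hence witnesses the failure of the epimorphism property.

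With this in hand I would extract the equivalence from the universal property of $F_G(X)$, using uniform continuity as the essential constraint. If $F_G(X)$ is trivial, its universal arrow $i\colon X\to F_G(X)$ is forced to be the trivial map; since $i(X)$ $G$-generates $F_G(X)$, every uniformly continuous $G$-map from $X$ into a $G$-group factors through $i$ and so is trivial as well, which (via the bijection above) leaves no nontrivial $H$-fixed point to feed into a would-be witnessing pair, and $f$ is an epimorphism. Conversely, a witnessing pair $g\neq h$ with $g|_H=h|_H$ should be repackaged, through the two-sided uniformity on $P$ and the semidirect product, as a \emph{uniformly continuous} $G$-map from $X$ into a $G$-group that is not constant; by the universal property this produces a nontrivial $G$-homomorphism out of $F_G(X)$, so $F_G(X)$ cannot be trivial. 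I would invoke Pestov's construction \cite{Pest-epic, pest-wh} for exactly this repackaging rather than reprove it.

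I expect the main obstacle to be the last step, namely manufacturing from an abstract witnessing pair $(g,h)$ a genuine $G$-group target together with a non-constant uniformly continuous $G$-map detecting $g\neq h$. The difficulty is structural: the $G$-space naturally attached to a pair is the \emph{homogeneous} (affine) space with the action $p\mapsto g(s)\,p\,h(s)^{-1}$, which is \emph{not} an action by group automorphisms, so one cannot simply embed it into a $G$-group. The reduction must therefore be routed through split extensions $Y\rtimes G$ while keeping the $G$-action jointly continuous and the induced map uniformly continuous for the two-sided uniformity; it is precisely the interplay between uniform continuity and the automorphizability of the coset action that makes the universal object $F_G(X)$ collapse exactly when $H$ is epi in $G$. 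Verifying that $F_G(X)$ captures the absence of a witnessing pair, with all maps remaining uniformly continuous, is the technical heart supplied by Pestov's theorem.
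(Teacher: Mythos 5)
First, a point of comparison: the paper does not prove this statement at all --- it is quoted verbatim as a Fact of Pestov with a citation to \cite{Pest-epic, pest-wh} --- so there is no internal proof to measure your sketch against. Judged on its own terms, your proposal correctly proves one implication and leaves a genuine gap in the other. The reduction of $gf=hf$ to $g|_H=h|_H$ (density of $f(M)$ in $H$ plus Hausdorffness of the target) is fine, and your semidirect-product device is exactly the right argument for ``(1) $\Rightarrow$ (2)'' in contrapositive form: if $F_G(X)$ is nontrivial, the universal arrow $i\colon X\to F_G(X)$ is nonconstant, $y_0:=i(eH)$ is an $H$-fixed but not $G$-fixed point of the $G$-group $F_G(X)$, and the pair $\sigma(s)=(e,s)$, $\sigma'(s)=\bigl(y_0\,(s\cdot y_0^{-1}),s\bigr)$ in $F_G(X)\rtimes G$ witnesses the failure of epimorphy; this half is complete modulo the routine remarks that joint continuity of the action makes $F_G(X)\rtimes G$ a Hausdorff topological group and that a constant $i$ forces $F_G(X)\cong\Z$.

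The gap is in the implication ``$F_G(X)$ trivial $\Rightarrow$ $f$ epi.'' Your first paragraph of the final block argues that triviality of $F_G(X)$ leaves ``no nontrivial $H$-fixed point to feed into a would-be witnessing pair,'' but this only excludes witnessing pairs of the special conjugated-section form $(\sigma,\sigma')$; an arbitrary pair $g\neq h$ with $g|_H=h|_H$ is not a priori of that form, so the conclusion does not follow. You correctly locate the obstruction --- the natural map $sH\mapsto g(s)h(s)^{-1}$ is equivariant only for the affine action $s*p=g(s)\,p\,h(s)^{-1}$, which is not by automorphisms --- but you then outsource its resolution to Pestov rather than supplying it. Note also that routing through a split extension does not by itself close the gap: letting $G$ act on $P$ by conjugation through $h$, the map $s\mapsto\bigl(g(s)h(s)^{-1},s\bigr)$ is indeed a second continuous homomorphism into $P\rtimes G$ agreeing with $s\mapsto(e,s)$ on $H$, but its first coordinate is a cocycle for that action which is in general not a coboundary, hence does not arise from an $H$-fixed point, and one is back where one started. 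Converting an arbitrary such pair into a nonconstant $G$-map from $G/H$ into a genuine $G$-group (equivalently, showing $i$ is then nonconstant) is the technical heart of Pestov's theorem; as written, your proposal names this step but assumes it rather than proves it. For what it is worth, the paper's own applications (Theorems \ref{epic=dense} and \ref{epic-NA}) only use the direction you did prove.
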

%

Triviality in (2) means `as trivial as possible', that is, $F_G(X)$ is isomorphic to the
cyclic discrete group.

\begin{remark} \label{non-aut}
  Let $X$ be the 
   $n$-dimensional cube
$[0,1]^n$ or the $n$-dimensional sphere $\mathbb{S}_n$. Then by
\cite{Me-F} the free topological $G$-group $F_G(X)$ of the $G$-space
$X$ is trivial for every $n \in \N$, where $G=\Homeo(X)$ is the
corresponding homeomorphism group. So, one of the possible examples
of an epimorphism which is not dense can be constructed as the
natural embedding $H \hookrightarrow G$
where $G=\Homeo(\mathbb{S}_1)$ and $H=G_z$ is the stabilizer 
of a point $z \in \mathbb{S}_1$. 
 The same example serves as the original counterexample to the epimorphism problem
in the paper of Uspenskij \cite{Us-epic}.
\end{remark}


In contrast, for Stone spaces  we have:

\begin{fact} \label{p:aut}  \cite[Lemma 4.3.2]{MS1}
Every continuous action of a topological group $G$ on a Stone space
$X$ is automorphizable (in $\mathbf{NA}$).
Hence the canonical $G$-map $X \to F_G(X)$ 
is an embedding.
\end{fact}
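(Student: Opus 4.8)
The plan is to realize the $G$-group into which $X$ embeds explicitly via Stone--Pontryagin duality, exactly as in the setup preceding Remark~\ref{Stone duality}. Set $V:=C(X,\Z_2)$, the discrete Boolean group of clopen subsets of $X$, and let $Y:=V^*=\hom(V,\Z_2)\subseteq \Z_2^{V}$ be its Pontryagin dual, a Boolean profinite group and hence non-archimedean. The group $G$ acts on $V$ by $(g\cdot f)(x):=f(g^{-1}x)$, an action by group automorphisms, and I would equip $Y$ with the dual action $(g\cdot\phi)(f):=\phi(g^{-1}\cdot f)$, which is again by group automorphisms. The candidate equivariant embedding is the evaluation map $\delta\colon X\to Y$, $\delta_x(f)=f(x)$; a direct computation gives $(g\cdot\delta_x)(f)=f(gx)=\delta_{gx}(f)$, so $\delta$ is $G$-equivariant. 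It then remains to verify three things: that $\delta$ is a topological embedding, that the dual $G$-action on $Y$ is (jointly) continuous, and that these facts yield automorphizability in $\mathbf{NA}$.

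The embedding property is the easy part: since $X$ is a Stone space, clopen sets separate points, so $\delta$ is injective; it is continuous into the product $\Z_2^{V}$ because each coordinate $x\mapsto f(x)$ is continuous; and an injective continuous map from a compact space into a Hausdorff space is a topological embedding. The heart of the argument, and the step I expect to be the main obstacle, is continuity of the action. The key lemma is that for each fixed $f\in V$ the orbit map $g\mapsto g\cdot f$ into the \emph{discrete} group $V$ is locally constant; equivalently, the stabilizer of every clopen set is open in $G$. I would prove this using compactness of $X$: the map $\Phi\colon G\times X\to\Z_2$, $\Phi(g,x)=f(g^{-1}x)$, is continuous, so for each $x$ there is a product neighbourhood $U_x\times W_x$ of $(e,x)$ on which $\Phi$ is constant; covering $X$ by finitely many $W_{x_1},\dots,W_{x_n}$ and intersecting the corresponding $U_{x_i}$ yields a neighbourhood $U$ of $e$ with $g\cdot f=f$ for all $g\in U$.

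Given this local constancy, continuity of $G\times Y\to Y$ at a point $(g_0,\phi_0)$ follows from a finite-support argument: a basic neighbourhood of $g_0\cdot\phi_0$ is cut out by finitely many $f_1,\dots,f_n\in V$, and $(g\cdot\phi)(f_i)=\phi(g^{-1}\cdot f_i)$; choosing $U\ni g_0$ so that $g^{-1}\cdot f_i=g_0^{-1}\cdot f_i=:h_i$ for all $g\in U$ and setting $W:=\{\phi:\phi(h_i)=\phi_0(h_i),\ 1\le i\le n\}$ makes each $(g\cdot\phi)(f_i)$ constant on $U\times W$. Thus $Y$ is a non-archimedean $G$-group containing $X$ as a $G$-subspace via $\delta$, which is precisely automorphizability in $\mathbf{NA}$. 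For the final ``hence'', the universal property of $F_G(X)$ produces a $G$-homomorphism $F_G(X)\to Y$ through which the embedding $\delta\colon X\to Y$ factors as $X\to F_G(X)\to Y$; since a map that composes to a topological embedding is itself an embedding, the canonical $G$-map $X\to F_G(X)$ is an embedding.
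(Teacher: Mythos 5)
Your proof is correct and follows essentially the same route the paper takes: the Fact is quoted from \cite{MS1}, but Section \ref{s:H} and Remark \ref{Stone duality} describe exactly your construction --- the evaluation map $\delta: X \to V^*$ into the Pontryagin dual of the discrete Boolean group $V=C(X,\Z_2)$ of clopen sets, with the dual $G$-action continuous because stabilizers of clopen sets are open by compactness. Your verification of joint continuity via finitely many coordinates and the factorization through $F_G(X)$ are both sound, so there is nothing to add.
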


Roughly speaking the action by conjugations of a subgroup $H$ of a
non-archimedean group $G$ on $G$ 
reflects all possible difficulties of the Stone actions.
Below, in Theorem \ref{t:AE}, we extend Fact \ref{p:aut} to a
much larger class of actions on 
non-archimedean uniform spaces, where $X$ need not be compact. This
will be used in Theorem \ref{epic=dense} which deals with epimorphisms into
$\mathbf{NA}$-groups.


%


\begin{defi} \label{d:quasib} \cite {MEG89, me-fr}
Let $\pi: G \times X \to X$ be an action and ${\mathcal U}$ be a uniformity
on $X$. We say that the action (or, $X$) is \emph{quasibounded} if
for every $\varepsilon \in {\mathcal U}$ 
there exist: $\delta \in {\mathcal U}$
and a neighborhood 
$O$ of $e$ in $G$ such that
$$(gx,gy) \in \varepsilon  \ \ \ \ \forall \ (x,y) \in \delta, \ g \in O.$$

We say that the action on the uniform space $(X,{\mathcal U})$ is $\pi$-\emph{uniform}
if the action is quasibounded and all $g$-translations are ${\mathcal U}$-uniformly continuous. Equivalently,
for every $\varepsilon \in {\mathcal U}$ and $g_0 \in G$
there exist: $\delta \in {\mathcal U}$
and a neighborhood 
$O$ of $g_0$ in $G$ such that
$$(gx,gy) \in \varepsilon  \ \ \ \ \forall \ (x,y) \in \delta, \ g \in O.$$
\end{defi}

For a given topological group $G$
denote by $\Unif^G$ the triples $(X,{\mathcal U},\pi)$ where  
$(X,{\mathcal U})$ is a uniform space and $\pi:G \times X \to X$ is a continuous $\pi$-uniform action.

It is an easy observation that if the action $\pi: G \times X \to X$
is ${\mathcal U}$-quasibounded and the orbit maps $\tilde{x}: G \to X$ are
continuous then $\pi$ is continuous.

It is a remarkable fact that a topological $G$-space $X$ is
$G$-\emph{compactifiable} if and only if $X$ is ${\mathcal U}$-quasibounded
with respect to some compatible uniformity ${\mathcal U}$ on $X$,  \cite{MEG89,Me-F}. This was the main motivation to introduce quasibounded actions.
This concept 
gives a simultaneous generalization of some important classes of actions on uniform spaces.

\begin{fact} \label{fac:puni}  \cite{MEG89,Me-F}
We list here some examples of actions from $\Unif^G$.

\ben
\item Continuous isometric actions of $G$ on metric spaces.
\item $\Comp^G \subset \Unif^G$. Continuous actions on compact spaces (with their unique compatible uniformity).
\item
$(G/H,{\mathcal U}_r) \in \Unif^G$. Let $X=G/H$ be the coset
$G$-space and ${\mathcal U}_r$ is the right uniformity on $X$ (which
is always compatible with the topology).
\item
Let $X$ be a $G$-group.
Then $(X,{\mathcal U}) \in \Unif^G$ for every ${\mathcal U} \in
\{{\mathcal U}_r,{\mathcal U}_l,{\mathcal U}_{l \vee r},{\mathcal
U}_{l \wedge r}\}$. \een
\end{fact}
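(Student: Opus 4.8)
The plan is to verify, for each of the four families, the two ingredients of $\pi$-uniformity separately: that every translation $\pi^g$ is uniformly continuous, and that the action is \emph{quasibounded}; continuity of $\pi$ itself is either part of the hypothesis (items (1), (2), (4)) or a standard fact for coset spaces (item (3)), so I would only remark on it. For the isometric case (1) both ingredients are immediate: each $\pi^g$ preserves $d$, hence is an isometry and a fortiori uniformly continuous, and given an entourage $\varepsilon=\{(x,y):d(x,y)<r\}$ one simply takes $\delta=\varepsilon$ and $O=G$, since $d(gx,gy)=d(x,y)$ forces $(gx,gy)\in\varepsilon$ for every $g$.

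For the compact case (2) --- which I expect to be the \textbf{main obstacle}, as it is the only item with no algebraic structure to exploit --- uniform continuity of each $\pi^g$ is free, because a continuous self-map of a compact uniform space is automatically uniformly continuous. The quasiboundedness is the substantial point. I would consider the continuous map $\theta\colon G\times X\to X\times X$, $\theta(g,x)=(x,gx)$, and fix $\varepsilon\in\mathcal U$; choosing a symmetric $\varepsilon_1$ with $\varepsilon_1\circ\varepsilon_1\circ\varepsilon_1\subseteq\varepsilon$, note that $\theta(e,x)=(x,x)\in\varepsilon_1$ for every $x$. By continuity each point $x$ has a neighborhood $U_x$ and $e$ has a neighborhood $O_x\in N_e(G)$ with $\theta(O_x\times U_x)\subseteq\varepsilon_1$. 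Compactness of $X$ lets me pass to a finite subcover $U_{x_1},\dots,U_{x_n}$ and set $O=\bigcap_i O_{x_i}$, so that $(x,gx)\in\varepsilon_1$ for all $x\in X$ and $g\in O$. Then for $(x,y)\in\delta:=\varepsilon_1$ and $g\in O$ the chain $(gx,x),(x,y),(y,gy)\in\varepsilon_1$ yields $(gx,gy)\in\varepsilon_1\circ\varepsilon_1\circ\varepsilon_1\subseteq\varepsilon$, which is exactly quasiboundedness.

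Items (3) and (4) I would treat together, reducing both to the joint continuity of an algebraic map at the identity. For the left $G$-space $X=G/H$ with the right uniformity, whose basic entourages I write as $\widetilde{R}_V=\{(aH,bH):aHb^{-1}\cap V\neq\emptyset\}$ for $V=V^{-1}\in N_e(G)$, a direct computation gives $(gaH,gbH)\in\widetilde{R}_V$ iff $aHb^{-1}\cap g^{-1}Vg\neq\emptyset$; hence uniform continuity of $\pi^g$ follows from $g^{-1}Vg\in N_e(G)$, while quasiboundedness amounts to finding $O,W\in N_e(G)$ with $gWg^{-1}\subseteq V$ for all $g\in O$, which is precisely continuity of the conjugation map $G\times G\to G$ at $(e,e)$. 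For a $G$-group $X$ each $\pi^g$ is a topological automorphism, so it carries $(x,y)\mapsto(gx,gy)$ to $\pi^g(xy^{-1})=(gx)(gy)^{-1}$, and likewise for $x^{-1}y$ and for the relation $y\in VxV$; uniform continuity of $\pi^g$ is then automatic. For quasiboundedness, since $\pi^g$ fixes $e_X$ and $\pi$ is jointly continuous, for each $V\in N_{e_X}(X)$ I can choose $O\in N_e(G)$ and $W\in N_{e_X}(X)$ with $\pi(O\times W)\subseteq V$; feeding $W$ into the appropriate entourage ($R_W$, $L_W$, $R_W\cap L_W$, or the lower (Roelcke) entourage $\{(x,y):y\in WxW\}$) and using that $\pi^g$ respects the group operation handles all four uniformities $\mathcal U_r,\mathcal U_l,\mathcal U_{l\vee r},\mathcal U_{l\wedge r}$ at once.

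The delicate points, and where I would spend the most care, are getting the entourage descriptions of $\mathcal U_r$ on $G/H$ and of the lower uniformity $\mathcal U_{l\wedge r}$ exactly right, so that the automorphism and conjugation computations go through, together with the compactness step in (2); the remaining verifications are routine applications of joint continuity of multiplication and of the action at the identity.
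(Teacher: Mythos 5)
The paper states this as a \emph{Fact} imported from \cite{MEG89,Me-F} and gives no proof of it, so there is no internal argument to compare yours against. Your verification is correct and is the standard one: the isometric case is immediate with $O=G$; the compact case is handled by the tube-lemma argument applied to $\theta(g,x)=(x,gx)$ together with a symmetric $\varepsilon_1$ with $\varepsilon_1\circ\varepsilon_1\circ\varepsilon_1\subseteq\varepsilon$ (and automatic uniform continuity of continuous self-maps of compacta); your entourage description of $\mathcal U_r$ on $G/H$ agrees, for symmetric $V$, with the paper's $\tilde U=\{(aH,bH):bH\subseteq UaH\}$, and quasiboundedness correctly reduces to continuity of conjugation at $(e,e)$; and the $G$-group case correctly reduces all four uniformities to continuity of $\pi$ at $(e_G,e_X)$ via the identity $(gx)(gy)^{-1}=\pi^g(xy^{-1})$. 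No gaps.
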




Recall that the well known Arens-Eells
linearization theorem (cf.
\cite{AE}) 
 asserts that every uniform (metric) space can be
(isometrically) embedded into a locally convex vector space (resp.,
normed space). This theorem on isometric linearization of metric
spaces can be naturally extended to the case of non-expansive
semigroup actions provided that the metric is bounded \cite{Me-cs},
or, assuming only that the orbits are bounded \cite{Schroder}.

Furthermore, suppose that an action of a \emph{group} $G$ on a
metric space $(X,d)$
is only $\pi$-uniform 
in the sense of Definition \ref{d:quasib} (and not necessarily non-expansive).
Then again such an action admits an
isometric $G$-linearization on a normed space.

Here we give a non-archimedean
$G$-version of Arens-Eells type theorem 
for uniform group actions.
Note that we will establish an ultra-metric $G$-version in
Theorem \ref{t:AE} below. The assumption $(X,{\mathcal U}) \in \Unif^G$ in Theorems \ref{thm:auna} and \ref{t:AE}
is necessary by Fact \ref{fac:puni}.4.

\begin{thm} \label{thm:auna}
Let $\pi: G \times X \to X$ be a continuous 
action of a topological group $G$ on a non-archimedean 
uniform space $(X,{\mathcal U})$. If $(X,{\mathcal U}) \in \Unif^G$
then the induced action by automorphisms
$$\overline{\pi}: G \times B_{\sna}(X) \to B_{\sna}(X), \ (g,u)
\mapsto gu$$ is continuous 
and  $(X,{\mathcal U})$ is a uniform $G$-subspace of  $B_{\sna}(X).$
Hence, $(X,{\mathcal U})$ is uniformly $G$-automorphizable (in
$\mathbf{NA}$).
\end{thm}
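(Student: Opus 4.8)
The plan is to let each $g\in G$ act on the abstract free Boolean group $B(X)\cong B_{\sna}(X)$ (Theorem \ref{thm:fnag}.2) by the unique automorphism extending the bijection $x\mapsto gx$, so that on generators $g\cdot(x_1+\cdots+x_n)=gx_1+\cdots+gx_n$. Since $i(x)=\{x\}$, this gives $g\cdot i(x)=i(gx)$, so $i$ is automatically $G$-equivariant and $i(X)$ is $G$-invariant; as $i$ is already a uniform embedding by Theorem \ref{thm:fnag}.1, once $\overline\pi$ is shown to be continuous the space $(X,\mathcal{U})$ will be a uniform $G$-subspace of the $G$-group $B_{\sna}(X)$, which is exactly uniform $G$-automorphizability in $\mathbf{NA}$. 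The entire content therefore lies in proving that $\overline\pi\colon G\times B_{\sna}(X)\to B_{\sna}(X)$ is jointly continuous, and here I would use the description of $N_0(B_{\sna})$ from Theorem \ref{thm:desna}.2: the subgroups $\langle\eps\rangle$ generated by $\{x+y:(x,y)\in\eps\}$, with $\eps\in\mathcal{B}$, form a base of neighborhoods of $0$.

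The crucial point is a non-archimedean equicontinuity statement: for every $\eps\in\mathcal{B}$ and every $g_0\in G$ there exist $\delta\in\mathcal{B}$ and a neighborhood $O$ of $g_0$ such that $g\cdot\langle\delta\rangle\subseteq\langle\eps\rangle$ for all $g\in O$. To obtain this I would apply the hypothesis $(X,\mathcal{U})\in\Unif^G$ (that is, the $\pi$-uniformity of Definition \ref{d:quasib}) to the entourage $\eps$ and the point $g_0$, getting $\delta'\in\mathcal{U}$ and $O\ni g_0$ with $(ga,gb)\in\eps$ whenever $(a,b)\in\delta'$ and $g\in O$, and then refine $\delta'$ to an equivalence relation $\delta\in\mathcal{B}$ with $\delta\subseteq\delta'$. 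Any $v\in\langle\delta\rangle$ is a finite sum $v=\sum_i(a_i+b_i)$ with $(a_i,b_i)\in\delta$, so $g\cdot v=\sum_i(ga_i+gb_i)$, and each summand lies in $\langle\eps\rangle$ since $(ga_i,gb_i)\in\eps$; as $\langle\eps\rangle$ is a subgroup, $g\cdot v\in\langle\eps\rangle$. Taking $O$ to be a single point and invoking only uniform continuity of the one translation $\pi^g$ gives the same estimate, showing that each $\pi^g$ is a topological automorphism.

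With this in hand I would verify continuity at an arbitrary $(g_0,u_0)$ via the identity, valid because $B(X)$ is abelian and Boolean,
\[
g\cdot u + g_0\cdot u_0 = g\cdot(u+u_0) + (g\cdot u_0 + g_0\cdot u_0).
\]
Fix $\eps\in\mathcal{B}$. The equicontinuity statement supplies $\delta\in\mathcal{B}$ and $O_1\ni g_0$ with $g\cdot(u+u_0)\in\langle\eps\rangle$ whenever $u+u_0\in\langle\delta\rangle$ and $g\in O_1$. The orbit map $g\mapsto g\cdot u_0=\sum_j gx_j$ (writing $u_0=\sum_j x_j$) is continuous into $B_{\sna}$, being a finite sum, in the topological group $B_{\sna}$, of the continuous maps $g\mapsto i(gx_j)$ obtained from the continuity of $G\times X\to X$ and of $i$; hence there is $O_2\ni g_0$ with $g\cdot u_0+g_0\cdot u_0\in\langle\eps\rangle$ for $g\in O_2$. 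For $g\in O_1\cap O_2$ and $u\in u_0+\langle\delta\rangle$, both summands on the right lie in $\langle\eps\rangle$, so $g\cdot u\in g_0\cdot u_0+\langle\eps\rangle$ because $\langle\eps\rangle+\langle\eps\rangle=\langle\eps\rangle$. This is precisely continuity of $\overline\pi$ at $(g_0,u_0)$.

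The main obstacle is this joint-continuity step, and its resolution is exactly where the assumption $(X,\mathcal{U})\in\Unif^G$ is indispensable: mere quasiboundedness at $e$ would control the factor $g\cdot(u+u_0)$ only for $g$ near $e$, whereas $\pi$-uniformity at an arbitrary base point $g_0$ is what keeps the equicontinuity estimate valid in the presence of the translation term $g\cdot u_0+g_0\cdot u_0$. The non-archimedean structure then does the remaining work for free: since the basic neighborhoods $\langle\eps\rangle$ are subgroups that absorb sums, there is no need to split $\eps$ into smaller entourages, which is the step the classical metric Arens--Eells linearization requires.
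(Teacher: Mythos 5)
Your proposal is correct and follows essentially the same route as the paper: the paper likewise uses the base $\{\langle\eps\rangle\}$ from Theorem \ref{thm:desna}.2, derives $g\langle\delta\rangle\subseteq\langle\eps\rangle$ for $g\in O$ from the $\pi$-uniformity of the original action, and combines this with continuity of the orbit maps (via the fact that $\iota(X)$ algebraically spans $B_{\sna}(X)$). The only cosmetic difference is that the paper invokes the general observation stated after Definition \ref{d:quasib} (quasibounded plus continuous orbit maps implies jointly continuous), whereas you unwind that observation explicitly with the decomposition $g\cdot u+g_0\cdot u_0=g\cdot(u+u_0)+(g\cdot u_0+g_0\cdot u_0)$.
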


\begin{proof} By Theorem
\ref{thm:desna}.2 $\{< \eps > \}_{\eps \in Eq(\mathcal U)}$ is a base
of $N_0(B_{\sna}).$ By Theorem \ref{thm:fnag},
$(X,{\mathcal U})$ is a uniform subspace of $B_{\sna}.$ It is easy to see that $(X,{\mathcal U})$ is in fact  a
uniform $G$-subspace of $B_{\sna}(X).$ We show now that the action
$\overline{\pi}$ of $G$ on $B_{\sna}(X)$ is quasibounded and
continuous. The original action on $(X,{\mathcal U})$ is
$\pi$-quasibounded and continuous. Thus, for every
   $\eps\in Eq(\mathcal U)$ and $g_0 \in G$, there exist:
  $\delta\in Eq(\mathcal U)$ and a neighborhood $O(g_0)$ of $g_0$ in $G$ such that
for every $(x,y) \in \delta$  and  for every $g \in O$ we have
 $$(gx,gy)\in \eps.$$ This implies that $$g<\delta> \subseteq <\eps>  \ \forall  g \in
 O,$$ which proves that $\overline{\pi}$ is quasibounded.
 The map $\iota: X \hookrightarrow B_{\sna}(X), \ x\mapsto \{x\}$ is a topological
$G$-embedding. Together with the fact that
$\iota(X)$ algebraically spans $B_{\sna}(X)$ this implies that the
orbit mappings 
 $G \to B_{\sna}(X), \ g \mapsto gu$ are continuous for all
$u \in B_{\sna}(X)$. So we can conclude that $\overline{\pi}$ is continuous (see 
the remark after Definition \ref{d:quasib}) and $B_{\sna}(X)$ is a
$G$-group. 
\end{proof}

\begin{remark}\label{rem:ubn}
Let $(X,\mu)$ be a non-archimedean uniform space and
$\pi: G \times X \to X$ be a continuous action such that $(X,\mu) \in \Unif^G$.
The lifted action of $G$ on $B_{\sna}$ is continuous as we proved in
Theorem \ref{thm:auna}. This implies that $B_{\sna}$ is the \emph{free topological $G$-group} of $(X,\mathcal{U})$ in the class 
$\Omega$ of non-archimedean Boolean $G$-groups. 
Similarly, one may verify that the same remains true for $F^b_{\sna}$, $A_{\sna}$,
$F^{\scriptscriptstyle Prec}_{\sna}$,
$F_{\scriptscriptstyle Pro}$.
The case of $F_{\sna}(X,{\mathcal U})$, however, is unclear.
\end{remark}

\vskip 0.3cm

Recall that the
sets $$\tilde{U}:=\{(aH,bH): bH\subseteq UaH\},$$ where $U$ runs
over the neighborhoods of $e$ in $G$, form a uniformity base  on
$G/H$. This uniformity ${\mathcal U}_r$ (called the right uniformity) is compatible with the
quotient topology (see, for instance, \cite{DR81}).

\begin{thm} \label{epic=dense}
Let $f: M \to G$ be an epimorphism in the category $\mathbf{TGr}$. 
Denote by $H$ the closure of the subgroup $f(M)$ in $G$. Then each of the following conditions implies that $f(M)$ is dense in $G$.
\ben
\item The coset uniform space $(G/H,{\mathcal U}_r)$ is non-archimedean.
\item $G \in \mathbf{NA}$.
\item \emph{(T.H. Fay \cite{Fay})} $H$ is open in $G$.
\een
\end{thm}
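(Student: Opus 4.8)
The plan is to subsume all three statements under one principle: if $f:M\to G$ is an epimorphism and the coset space $X:=G/H$ carries \emph{some} non-archimedean uniformity $\mathcal U$, compatible with the quotient topology, for which the left translation action lies in $\Unif^G$, then $X$ is a single point, so $H=G$ and $f(M)$ is dense. By Pestov's criterion (Fact \ref{pest}) the assumption that $f$ is an epimorphism is equivalent to the triviality of the free topological $G$-group $F_G(X)$, so it suffices to show that $F_G(X)$ is \emph{non}-trivial as soon as $|X|\ge 2$. Under the stated hypotheses Theorem \ref{thm:auna} applies to $(X,\mathcal U)$ and produces a uniform $G$-embedding $X\hookrightarrow B_{\sna}(X)$ into the non-archimedean Boolean $G$-group $B_{\sna}(X)$, whose $G$-action is continuous; since $\mathcal U$ is compatible with the topology of $X$, this is a continuous, injective $G$-map into a topological $G$-group. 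By the universal property of $F_G(X)$ this embedding factors through the canonical $G$-map $X\to F_G(X)$, which is therefore itself injective; hence $F_G(X)$ cannot be trivial when $X$ has at least two points. This contradiction forces $H=G$.

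It remains to exhibit a suitable uniformity $\mathcal U$ in each case. For (1) I would simply take $\mathcal U=\mathcal U_r$: it is compatible with the quotient topology (as recalled before the statement), it is non-archimedean by hypothesis, and $(G/H,\mathcal U_r)\in\Unif^G$ automatically by Fact \ref{fac:puni}.3. For (2) the only task is to verify that $G\in\mathbf{NA}$ forces $\mathcal U_r$ to be non-archimedean, after which (1) applies verbatim. Using the description $\tilde U=\{(aH,bH):\ b\in UaH\}$ of the basic entourages, I would check that when $U$ is an open subgroup the relation $\tilde U$ is an equivalence relation: reflexivity from $e\in U$, symmetry from $U=U^{-1}$, and transitivity from $UU=U$ together with the observation that $UaH$ is a union of full left cosets. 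Since a group in $\mathbf{NA}$ has a neighborhood base at $e$ of open subgroups, the associated sets $\tilde U$ form a base of $\mathcal U_r$ consisting of equivalence relations, so $\mathcal U_r$ is non-archimedean.

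For (3) the right uniformity need not be non-archimedean (its basic entourages $\tilde U$ are equivalence relations only when $U$ is a subgroup, and $G$ is not assumed to lie in $\mathbf{NA}$), so I would argue differently. Since $H$ is open, $X=G/H$ is discrete, and I would equip it with the discrete uniformity, which is trivially non-archimedean and compatible with the discrete topology. The one thing to check is that this uniformity lies in $\Unif^G$: for the discrete uniformity both quasiboundedness and uniform continuity of the translations reduce to the tautology that $x=y$ implies $gx=gy$, so the action is $\pi$-uniform (continuity of the action being guaranteed by the openness of $H$). Invoking the general principle above then completes the argument.

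The main obstacle — indeed essentially the only substantive point — is the passage from the uniform $G$-embedding $X\hookrightarrow B_{\sna}(X)$ furnished by Theorem \ref{thm:auna} to the non-triviality of the \emph{topological} free $G$-group $F_G(X)$ used in Pestov's test: one must use that $\mathcal U$ is compatible with the topology and that $B_{\sna}(X)$ is a genuine topological $G$-group, so that the universal property of $F_G(X)$ is legitimately applicable and injectivity of $X\to F_G(X)$ can be read off. The remaining per-case verifications (non-archimedean-ness of $\mathcal U_r$ for $G\in\mathbf{NA}$ and membership of the discrete uniformity in $\Unif^G$) are routine.
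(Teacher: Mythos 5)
Your proposal is correct and follows essentially the same route as the paper: Pestov's criterion (Fact \ref{pest}) combined with the $G$-automorphizability supplied by Theorem \ref{thm:auna}, taking $\mathcal U=\mathcal U_r$ for (1), checking that open subgroups make the entourages $\tilde U$ equivalence relations for (2), and switching to the discrete uniformity on the discrete coset space for (3). The only cosmetic difference is that you extract the common principle explicitly and deduce non-triviality of $F_G(X)$ from injectivity of the canonical map, whereas the paper argues via the existence of a nontrivial equivariant morphism into a Hausdorff $G$-group; these are the same argument.
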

\begin{proof} (1)
We have to show that $H=G$. Assuming the contrary consider the
\emph{nontrivial} Hausdorff coset $G$-space $G/H$.
 By Fact \ref{fac:puni}.3 the natural continuous
left action $\pi:G\times G/H\to G/H $ is $\pi$-uniform. Hence, we can apply
Theorem \ref{thm:auna} to conclude that the nontrivial $G$-space
$X:=G/H$ is $G$-automorphizable in $\mathbf{NA}$. In particular,
we obtain that there exists a \emph{nontrivial} equivariant morphism of the $G$-space $X$ to 
a Hausdorff $G$-group $E$. This implies that the free topological
$G$-group $F_G(X)$ of the $G$-space $X$ is not trivial. By the
criterion of Pestov (Fact \ref{pest}) we conclude that $f: M \to G$
is not an epimorphism.

(2) By (1) it is enough to show that the right uniformity on $G/H$ is non-archimedean.
Since $G$ is $\mathbf{NA}$ there exists a local
base $\mathcal{B}$ at $e$ consisting of clopen subgroups. Then it is straightforward to show that
$\tilde{\mathcal{B}}:=\{\tilde{U}: U\in \mathcal{B}\}$ is a base
of the right uniformity of $G/H$ such that its elements are equivalence
relations on $G/H$.

(3) If $H$ is open then  $X=G/H$ is topologically discrete. The
discrete uniformity $\mathcal U$ is  compatible  and we have
$(X,{\mathcal U}) \in \Unif^G$. As in the proof of (1) we apply
Theorem \ref{thm:auna} and Fact \ref{pest}.
\end{proof}

Assertion (3) is just the theorem of Fay \cite{Fay} mentioned above
in Subsection \ref{s:ItroEpi}. In  assertion (1) it suffices to
assume that the universal non-archimedean image of the coset uniform
space $G/H$ is nontrivial.

Note that if
$H \to G$ is not an epimorphism in $\mathbf{NA}$ then, a fortiori,
it is not an epimorphism in $\mathbf{TGr}$.
With a bit more work we can  refine
assertion (2) of Theorem \ref{epic=dense} as follows.

\begin{thm} \label{epic-NA}
Morphism $f: M \to G$ in the category $\mathbf{NA}$ is an epimorphism in $\mathbf{NA}$ (if and) only if $f(M)$ is dense in $G$.
\end{thm}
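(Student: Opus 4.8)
The plan is to treat the two implications separately. The forward direction (density implies epimorphism) is routine and valid in any category of Hausdorff topological groups: if $f(M)$ is dense in $G$ and $g,h\colon G\to P$ are morphisms in $\mathbf{NA}$ with $g\circ f=h\circ f$, then $g$ and $h$ agree on the dense set $f(M)$ and hence coincide, since $P$ is Hausdorff. So I would concentrate on the converse, proving it in contrapositive form: assuming $f(M)$ is not dense, I construct two distinct morphisms out of $G$ in $\mathbf{NA}$ that agree after precomposition with $f$.

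Assume $H:=\overline{f(M)}\neq G$ and set $X:=G/H$, a nontrivial Hausdorff coset $G$-space. Exactly as in the proof of Theorem \ref{epic=dense}(2), since $G\in\mathbf{NA}$ the right uniformity ${\mathcal U}_r$ on $X$ is non-archimedean --- a base of equivalence relations $\tilde U$ arises from a local base of clopen subgroups of $G$ --- and by Fact \ref{fac:puni}.3 the natural action satisfies $(G/H,{\mathcal U}_r)\in\Unif^G$. This places us under the hypotheses of Theorem \ref{thm:auna}, which I would invoke to obtain the Boolean group $E:=B_{\sna}(X)$ together with a continuous $G$-action by automorphisms turning $E$ into a non-archimedean $G$-group, and with the canonical map $\iota\colon X\hookrightarrow E,\ aH\mapsto\{aH\}$ being a $G$-equivariant uniform embedding. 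The decisive feature is that this automorphization is performed inside $\mathbf{NA}$; this is precisely what will allow the separating morphisms below to live in the category $\mathbf{NA}$, and it is where the present argument improves on Theorem \ref{epic=dense}(2), whose appeal to Pestov's criterion (Fact \ref{pest}) only detects epimorphisms in $\mathbf{TGr}$.

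The final step is the classical semidirect-product twist, now carried out within $\mathbf{NA}$. Form $P:=E\rtimes G$; since the action is continuous and $\mathbf{NA}$ is closed under semidirect products, $P\in\mathbf{NA}$. Writing $0$ for the identity of $E$ and putting $v:=\iota(eH)=\{eH\}\in E$, I define
$$h_1\colon G\to P,\ \ h_1(g)=(0,g), \qquad h_2\colon G\to P,\ \ h_2(g)=(v-g\cdot v,\,g),$$
where $h_2$ is the conjugate of $h_1$ by the fixed element $(v,e)\in P$, hence a homomorphism; both are continuous because the orbit map $g\mapsto g\cdot v$ is continuous. For $g\in H$ one has $g\cdot v=\{gH\}=\{eH\}=v$, so $h_1$ and $h_2$ agree on $H\supseteq f(M)$ and therefore $h_1\circ f=h_2\circ f$. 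Conversely, if $g\notin H$ then $g\cdot v=\{gH\}\neq\{eH\}=v$ by injectivity of $\iota$, so $h_1\neq h_2$. Thus $f$ fails to be an epimorphism in $\mathbf{NA}$, which finishes the contrapositive.

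The one point requiring genuine care is the reduction to Theorem \ref{thm:auna}: one must check that $(G/H,{\mathcal U}_r)$ is simultaneously non-archimedean and an object of $\Unif^G$, so that the equivariant embedding of $X$ into the non-archimedean Boolean $G$-group $E$ is actually available. Once $E$ is secured the remaining construction is formal; the essential gain over Theorem \ref{epic=dense} is merely that $E$, and hence $P$, remains in $\mathbf{NA}$, so the two maps separating $G$ are $\mathbf{NA}$-morphisms and already refute epimorphy in the smaller category.
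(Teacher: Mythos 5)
Your proof is correct and follows essentially the same route as the paper: reduce to the nontrivial coset space $G/H$, verify it lies in $\Unif^G$ with a non-archimedean uniformity, apply Theorem \ref{thm:auna} to automorphize it in $\mathbf{NA}$ via $B_{\sna}(G/H)$, and separate with two morphisms into the semidirect product. The only difference is that you write out explicitly the pair $h_1,h_2$ (one the conjugate of the other by $(\iota(eH),e)$), which the paper delegates to a citation of Pestov's argument; your verification of that step is accurate.
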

\begin{proof} Assume that $X:=G/H$ is non-trivial where $H:=cl(f(M))$.
It is enough to show that there exists a $\mathbf{NA}$ group $P$ and a pair of distinct morphisms
$g,h: G\to P$ such that $g \circ f=h \circ f.$
Theorem \ref{thm:auna} says not only that the (nontrivial) $G$-space $X=G/H$ is $G$-automorphizable but also that
 it is $G$-automorphizable in $\mathbf{NA}$. By Theorem \ref{thm:auna}, $B_{\sna}(X) \in \mathbf{NA}$ is a $G$-group.
 Now choose the desired $P$ as the corresponding
 semidirect product of $B_{\sna}(X)$ and $G$.
 Since $\mathbf{NA}$ is closed under semidirect products we obtain that $P \in \mathbf{NA}.$
 According to the approach of \cite{Pest-epic} there exists a pair of distinct morphisms
$g,h: G\to P$ such that $g \circ f=h \circ f.$
\end{proof}

\section{Group actions on ultra-metric spaces and Graev type ultra-norms}
\label{sbs:gun}


\begin{lemma} \label{l:ext}
Let $f: X \to \R$ be a function on an ultra-metric space $(X,d)$.
There exists a one-point ultra-metric extension $X \cup \{b\}$ of $X$ such that $f$ is the distance from $b$ if and only if
$$
|f(x)-f(y)| \leq d(x,y) \leq max\{f(x) , f(y)\}
$$
for all $x,y \in X$.
\end{lemma}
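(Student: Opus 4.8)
The plan is to realize the extension explicitly and reduce both directions to the strong triangle inequality on triples containing the new point. Define $d'$ on $(X\cup\{b\})\times(X\cup\{b\})$ by $d'|_{X\times X}=d$, $d'(b,b)=0$ and $d'(b,x)=d'(x,b)=f(x)$ for $x\in X$. The assertion then amounts to: $d'$ is an ultra-metric if and only if the two displayed inequalities hold, since by construction $d'$ restricts to $d$ and realizes $f$ as the distance from $b$.

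For the \emph{only if} direction I would assume $d'$ is an ultra-metric and simply test the strong triangle inequality on the three points $b,x,y$ for fixed $x,y\in X$. The instance $d(x,y)=d'(x,y)\le\max\{d'(x,b),d'(b,y)\}$ gives the upper bound $d(x,y)\le\max\{f(x),f(y)\}$ at once. For the lower bound I would use $f(x)=d'(b,x)\le\max\{f(y),d(x,y)\}$ (and its $x\leftrightarrow y$ twin): assuming $f(x)\ge f(y)$, the maximum is either $f(y)$, which forces $f(x)=f(y)$, or $d(x,y)$, which gives $f(x)\le d(x,y)$; either way $|f(x)-f(y)|\le d(x,y)$. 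This direction is routine and uses only the triangle inequalities already guaranteed by hypothesis.

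The substantive direction is \emph{if}: granting the two inequalities, I must check that $d'$ is an ultra-metric. Symmetry and vanishing on the diagonal are built into the definition. For positivity off the diagonal only the pairs $(b,x)$ need care, since $d$ is already an ultra-metric on $X$; here I would note that the hypotheses give $f\ge 0$ and that two distinct points cannot both have $f=0$ (the upper bound would force their distance to be $0$), so $b$ is a genuinely new point once the (at most one) degenerate value is accounted for. The strong triangle inequality is automatic for triples lying in $X$ and for triples repeating $b$, so the whole verification collapses to the triples $\{b,x,y\}$ with distinct $x,y\in X$, where I must produce the three inequalities $d(x,y)\le\max\{f(x),f(y)\}$, $f(x)\le\max\{f(y),d(x,y)\}$ and $f(y)\le\max\{f(x),d(x,y)\}$. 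The first is the upper hypothesis verbatim, and I would derive the remaining two from the lower hypothesis $|f(x)-f(y)|\le d(x,y)$ by the same max-case analysis used above. I expect this case analysis — turning the scalar sandwich into the three strong-triangle inequalities symmetrically in $x,y$, including the borderline case $f(x)=f(y)$ — to be the main obstacle; once the triple $\{b,x,y\}$ is handled uniformly, $d'$ is an ultra-metric extending $d$ with $d'(b,\cdot)=f$, which is exactly the required one-point extension.
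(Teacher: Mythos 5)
Your setup is the natural one and, for what it is worth, the paper gives no actual argument to compare against: its ``proof'' is a one-line reference to the metric analogue in Pestov's book. Your \emph{only if} direction is correct. The genuine gap is in the \emph{if} direction, at exactly the step you single out as the main obstacle: the hypothesis $|f(x)-f(y)|\le d(x,y)$ does \emph{not} yield the strong triangle inequality $f(x)\le\max\{f(y),d(x,y)\}$ --- it only yields the ordinary one, $f(x)\le f(y)+d(x,y)$, and no case analysis on the maximum can bridge that difference, because the implication is false. Concretely, take $X=\{x,y\}$ with $d(x,y)=1$, $f(x)=3/2$, $f(y)=1$. Then $|f(x)-f(y)|=1/2\le 1\le \max\{f(x),f(y)\}$, so the displayed condition holds for all pairs, yet $f(x)=3/2>1=\max\{f(y),d(x,y)\}$, so your $d'$ is not an ultra-metric and in fact \emph{no} one-point ultra-metric extension realizes this $f$. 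Thus the ``if'' half of the lemma as stated is false, and your proof breaks exactly where it has to.

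The condition that actually characterizes one-point ultra-metric extensions is the isosceles (ultra-Kat\v{e}tov) condition: $d(x,y)\le\max\{f(x),f(y)\}$ together with ``$f(x)\ne f(y)$ implies $\max\{f(x),f(y)\}\le d(x,y)$''; equivalently, among the three numbers $f(x)$, $f(y)$, $d(x,y)$ the maximum is attained at least twice. This implies the paper's displayed inequality (that is essentially your correct ``only if'' computation) but is strictly stronger. With it, your verification of the three strong triangle inequalities on a triple $\{b,x,y\}$ goes through, and your handling of positivity (at most one point of $X$ can have $f=0$) is fine. Note that the one place the paper invokes the ``if'' direction, namely Claim~1 in the proof of Theorem~\ref{t:AE} with $f(x)=\max\{d(x,x_0),1\}$, does satisfy the stronger condition, since $d(x,x_0)\le\max\{d(x,y),d(y,x_0)\}$ gives $f(x)\le\max\{d(x,y),f(y)\}$ directly; so the application survives, but neither the lemma as stated nor your proof of its ``if'' half does.
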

\begin{proof}
The proof is an ultra-metric modification of the proof in \cite[Lemma 5.1.22]{Pest-book}  
which asserts the following.
Let $f: X \to \R$ be a function on a metric space $(X,d)$.
There exists a one-point metric extension $X \cup \{b\}$ of $X$ such that $f$ is the distance from $b$ if and only if
$$
|f(x)-f(y)| \leq d(x,y) \leq f(x) + f(y)
$$
for all $x,y \in X$.
\end{proof}

The following result in particular 
gives a Graev type extension for ultra-metrics on free Boolean groups $B(X)$.
To an ultra-metric space $(X,d)$ we assign the Graev type group
$B_{Gr}(X,d)$. The latter (ultra-normable) topological group is in fact $B_{\sna}(X,\mathcal{U}_d)$, where $\mathcal{U}_d$ is the
uniformity of the metric $d$.


\begin{thm} \label{t:AE}

Let $(X,d)$ be an ultra-metric space and  $G$  a topological group.
Let $\pi: G \times X \to X$ be a continuous 
action such that $(X,\mathcal{U}_d) \in \Unif^G$.
 Then there exists
 an ultra-normed Boolean $G$-group $(E,||\cdot||)$ and
an isometric $G$-embedding $\iota: X \hookrightarrow E$ (with closed $\iota (X) \subset E$) such that:
\ben
\item
The norm on $E$ comes from the Graev-type ultra-metric extension of $d$ to $B(X)$.
\item
The topological groups $E$  and $B_{\sna}(X,\mathcal{U}_d)$ (the free Boolean 
$\mathbf{NA}$ group) are naturally isomorphic.
\een
\end{thm}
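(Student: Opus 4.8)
The plan is to realize $E$ as the free group $B_{\sna}(X,\mathcal{U}_d)$ carrying the Graev-type ultra-norm, and to prove the two assertions by (i) recalling the Graev extension and its pairing formula from \cite{MS1arx}, and (ii) identifying the metric balls of this norm with the basic subgroup-neighborhoods of the free topology described in Theorem \ref{thm:desna}.2. The $G$-structure is then imported directly from Theorem \ref{thm:auna}, which applies precisely because $(X,\mathcal{U}_d)$ is a non-archimedean member of $\Unif^G$.

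First I would set $E:=(B(X),||\cdot||)$, where $||\cdot||$ is the Graev-type ultra-norm extending $d$ from \cite{MS1arx}. Its defining feature is that $||w|| = \min_{\mathcal{P}} \max_{\{a,b\}\in \mathcal{P}} d(a,b)$, the minimum over partitions $\mathcal{P}$ of the support of $w$ into pairs (Lemma \ref{l:ext} supplies the one-point extensions needed to see that this gives a consistent ultra-metric, the base-point convention handling words of odd length). Two consequences are immediate: the natural map $\iota: x \mapsto \{x\}$ is isometric, since the two-element word $\{x\}+\{y\}$ admits the single pairing giving $||\{x,y\}|| = d(x,y)$; and $||\cdot||$ is a genuine ultra-norm, so $E$ is an ultra-normed, hence non-archimedean, Boolean group.

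The crux is the following identity. Because $d$ is an ultra-metric, each $\eps_r := \{(x,y): d(x,y) < r\}$ is transitive, hence an equivalence relation, so $\eps_r \in Eq(\mathcal{U}_d)$ and $\{\eps_r\}_{r>0}$ is a base of $\mathcal{U}_d$. I claim the ball $\{w : ||w|| < r\}$ coincides with the subgroup $\langle\eps_r\rangle$ generated by $\{x+y : (x,y)\in\eps_r\}$. The inclusion $\langle\eps_r\rangle \subseteq \{||w||<r\}$ is forced by the ultra-norm inequality: if $w=\sum_i (u_i+v_i)$ with $d(u_i,v_i)<r$, then $||w|| \le \max_i ||u_i+v_i|| = \max_i d(u_i,v_i) < r$. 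The reverse inclusion is exactly the content of the min-pairing formula: $||w|| < r$ produces a pairing of the support of $w$ into pairs of $d$-distance $<r$, whence $w \in \langle\eps_r\rangle$. Comparing with Theorem \ref{thm:desna}.2, whose base for $N_0(B_{\sna})$ is $\{\langle\eps\rangle\}_{\eps\in\mathcal{B}}$, this ball-equals-subgroup identity shows that the Graev-norm topology on $B(X)$ is exactly the topology of $B_{\sna}(X,\mathcal{U}_d)$. Since the underlying groups coincide (Theorem \ref{thm:fnag}.2) this is the natural isomorphism asserted in $(2)$, and $(1)$ merely records that its norm is the Graev extension of $d$.

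Finally I would assemble the $G$-group structure. As $(X,\mathcal{U}_d)\in\Unif^G$ and $\mathcal{U}_d$ is non-archimedean, Theorem \ref{thm:auna} provides a continuous action by automorphisms on $B_{\sna}(X,\mathcal{U}_d)=E$ making it a $G$-group and exhibits $\iota$ as a uniform $G$-embedding; closedness of $\iota(X)$ is Theorem \ref{thm:fnag}.3. Transporting these through the isomorphism of the previous paragraph yields the ultra-normed Boolean $G$-group $E$ together with the isometric $G$-embedding $\iota$ with closed image. The main obstacle is the ball-equals-subgroup identity, and within it the reverse inclusion, which must appeal to the precise min-pairing description of the Graev ultra-norm (and dispose of the base-point convention for odd-length words); everything else is bookkeeping over Theorems \ref{thm:auna}, \ref{thm:desna} and \ref{thm:fnag}.
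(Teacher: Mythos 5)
Your proposal is correct and follows essentially the same route as the paper: the norm is the min-over-pairings Graev ultra-norm (the paper's Claims 1--7 construct it via normal configurations), the identification with $B_{\sna}(X,\mathcal{U}_d)$ is exactly the paper's final observation that for $0<\eps<1$ the subgroup $\langle\eps\rangle$ is the $\eps$-ball around $\mathbf{0}$, and the $G$-structure is the quasiboundedness argument of Theorem \ref{thm:auna} (which the paper re-runs as Claim 8). The only caveat is that your ball-equals-subgroup identity should be restricted to $r\le 1$ (for $r>1$ odd-length words such as a single generator can have norm $<r$ yet lie outside $\langle\eps_r\rangle$), but since only small $r$ determine the topology this does not affect the argument.
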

\begin{proof}



Consider the \emph{free Boolean group}
$(B(X), +)$ over the set 
$X.$ The elements of $B(X)$ are finite subsets of $X$ and the group
operation $+$ is the symmetric difference of subsets. We denote
by $\textbf{0}$ the zero
element of $B(X)$ (represented by the empty subset of $X$).  Clearly, $u=-u$ for every $u \in B(X)$.
Consider the natural set embedding $$\iota: X \hookrightarrow B(X),
\  \iota(x)=\{x\}.$$ Sometimes we will identify $x \in X$ with
$\iota(x)=\{x\} \in B(X).$


Fix $x_0\in X$ and extend the definition of $d$ from $X$ to
$\overline{X}:=X\cup \{\textbf{0}\}$ by letting
$d(x,\textbf{0})=\max\{d(x,x_0),1\}.$

\vskip 0.2cm

\noindent \textbf{Claim 1:} $d:\overline{X}\times \overline{X}\to \R$
is an ultra-metric extending the original ultra-metric $d$ on $X$.
\begin{proof}
The proof can be derived from Lemma \ref{l:ext},  noting that
$$
|f(x)-f(y)| \leq d(x,y) \leq max\{f(x) , f(y)\}
$$
for $f(x):=\max\{d(x,x_0),1\}$.
\end{proof}

For every nonzero $u=\{x_1,x_2,x_3, \cdots, x_m\} \in B(X)$ define the {\it support of $u$} as $supp(u):=u$ if $m$ is even, $supp(u):=u\cup \{\textbf{0}\}$ if $m$ is odd.


By a \emph{configuration} we mean a finite subset of $\overline {X}
\times \overline {X}$ (finite relations). Denote by $\Conf$ the set
of all configurations.
We can think 
of  $\omega \in \Conf$ as a finite set of some pairs
$$
\omega=\{(x_1,x_2), (x_3, x_4), \cdots , (x_{2n-1}, x_{2n})\},
$$
where all $\{x_i\}_{i=1}^{2n}$ are (not necessarily distinct) elements of $\overline {X}$. If 
$x_i \neq x_k$ for all distinct $1\leq i,k \leq 2n$ then
$\omega$ is said to be \emph{normal}. 
For every $\omega \in \Conf$ the sum
$$
u:=\sum_{i=1}^{2n} x_i=\sum_{i=1}^{n} (x_{2i-1}-x_{2i})
$$
 belongs to $ B(X)$ and we say that $\omega$
\emph{represents} $u$ or, that $\omega$ is a
$u$-\emph{configuration}. Notation: $\omega \in \Conf(u)$. We denote
by $\Norm(u)$ the set of all normal configurations of $u$. If
$\omega \in \Norm(u)$ then necessarily $\omega \subseteq supp(u)
\times supp(u).$
It follows that  $\Norm(u)$ is a finite set for any given $u \in
B(X)$.

 Our aim is to define a Graev type ultra-norm $||\cdot||$
on the Boolean group
$(B(X), +)$ such that $d(x,y)=||x - y||, \ \forall x,y\in X$. 
For every configuration $\omega$ we define its $d$-\emph{length} by
$$\varphi(\omega)=\max_{1 \leq i \leq n} d(x_{2i-1},x_{2i}).
$$


\vskip 0.2cm

\noindent \textbf{Claim 2:} For every  nonzero element $u \in B(X)$
and every $u$-configuration
$$
\omega=\{(x_1,x_2), (x_3, x_4), \cdots , (x_{2n-1}, x_{2n})\}
$$
define the following elementary reductions: \ben
\item
Deleting a trivial pair $(t,t)$. That is, deleting the pair
$(x_{2i-1}, x_{2i})$ whenever $x_{2i-1}=x_{2i}$.
\item Define the \emph{trivial inversion at $i$} of $\omega$ as the replacement of $(x_{2i-1}, x_{2i})$
by the pair in the reverse order $(x_{2i}, x_{2i-1})$.
\item Define the \emph{basic chain reduction rule} as follows.
Assume that there exist distinct $i$ and $k$ such that $x_{2i}=
x_{2k-1}.$ We delete in the configuration $\omega$ two pairs
$(x_{2i-1}, x_{2i})$, $(x_{2k-1}, x_{2k})$ and add the  new
pair $(x_{2i-1}, x_{2k})$. \een

Then, in all three cases, we get again a  $u$-configuration.
Reductions (1) and (2) do not change the $d$-length of the
configuration. Reduction (3) cannot exceed the $d$-length.

\begin{proof}  Comes directly 
from the axioms of ultra-metric. In the proof of (3) observe that
$$x_{2i-1} + x_{2i} + x_{2k-1} + x_{2k}=x_{2i-1} +  x_{2k}$$
in $B(X)$. This ensures that the new configuration is again a
$u$-configuration.
\end{proof}

\vskip 0.2cm

\noindent \textbf{Claim 3:} For every  nonzero element $u \in B(X)$
and every $u$-configuration $\omega$
 there exists a normal $u$-configuration $\nu$ such that $\varphi(\nu) \leq \varphi(\omega)$.
\begin{proof}
Using Claim 2 after finitely many reductions of $\omega$ we get a
normal $u$-configuration $\nu$ such that $\varphi(\nu) \leq
\varphi(\omega)$.
\end{proof}

Now we can define the desired ultra-norm $||\cdot||$.
For every 
$u \in B(X)$ define
$$||u||=\inf_{\omega \in \Conf(u)} \varphi(\omega).$$

\vskip 0.2cm

\noindent \textbf{Claim 4:} For every nonzero $u \in B(X)$ we have
$$||u||=\min_{\omega \in \Norm(u)} \varphi(\omega).$$
\begin{proof}
By Claim 3 
 it is enough to compute $||u||$  via normal $u$-configurations. So, since $\Norm(u)$ is finite, we may replace $\inf$ by $\min$.
\end{proof}

\vskip 0.2cm

\noindent \textbf{Claim 5:} $||\cdot||$ is an ultra-norm on $B(X)$.
\begin{proof}
Clearly,  $||u|| \geq 0$ and $||u||=||-u||$
(even $u=-u$) for every $u \in B(X)$.
For the $\textbf{0}$-configuration  $\{(\textbf{0},\textbf{0})\}$ we obtain that 
$||\textbf{0}|| \leq d(\textbf{0},\textbf{0})=0,$ and so
$||\textbf{0}||=0$. 
Furthermore, if $u\neq \textbf{0}$ then for every $\omega \in
\Norm(u)$ and for each $(t,s)\in \omega$  we have $d(t,s)\neq 0.$ We
can use Claim 4 to conclude that  $||u||\neq 0$. Finally, we have to
show that
$$||u+v|| \leq \max \{||u|| , ||v|| \} \ \ \ \forall \ u,v \in B(X).$$
Assuming the contrary, there exist  configurations
$\{(x_i,y_i)\}_{i=1}^n,  \{(t_i,s_i)\}_{i=1}^m$ with $u=\sum_{i=1}^n
(x_i-y_i), \ v=\sum_{i=1}^m (t_i-s_i)$ such that
$$||u+v||> c:=\max\{\max_{1 \leq i \leq n} d(x_i,y_i),\max_{1 \leq i
\leq m} d(t_i,s_i)\}.$$ Since $\omega:=\{(x_1,y_1), \cdots,
(x_n,y_n), (t_1,s_1), \cdots ,(t_m,s_m)\}$ is a configuration of
$u+v$ with $||u+v|| > \varphi(\omega)=c$, we obtain a contradiction
to the definition of $||\cdot||$.
\end{proof}

\vskip 0.2cm \textbf{Claim 6:} $\iota: (X,d) \hookrightarrow
E:=(B(X),||\cdot||)$
is an isometric embedding, i.e.
$$||x-y||=d(x,y) \ \ \ \forall \ x,y \in X.$$
\begin{proof}
By Claim 4 we may compute the ultra-norm via normal configurations. For the element
$u=x-y \neq \textbf{0}$ the only possible \emph{normal }
configurations are $\{(x,y)\}$ or $\{(y,x)\}$. So $||x-y||=d(x,y).$
\end{proof}

One can prove similarly that $d(x,\textbf{0})=||x||.$ This
observation will be used in the sequel.

 \vskip 0.2cm

\textbf{Claim 7:} For any given $u \in B(X)$ with $u \neq
\textbf{0}$ we have
$$||u|| \geq \min\{d(x_i,x_k): \ \  x_i,x_k \in supp(u), \ x_i \neq x_k\}.$$
\begin{proof} Easily deduced from Claims 3 and 4.
\end{proof}


%


We have the natural group action $$\overline{\pi}: G \times B(X) \to
B(X), (g,u) \mapsto gu$$ induced by the given action $G \times X \to
X$. Clearly, $g(u+v)=gu +gv$ for every $(g,u,v) \in G \times B(X)
\times B(X)$.
So this action is by automorphisms. Clearly $g \textbf{0}
=\textbf{0}$ for every $g \in G.$ It follows that $\iota: X \to
B(X)$ is a $G$-embedding. \vskip 0.2cm \noindent \textbf{Claim 8:}
The action $\overline{\pi}$ of $G$ on $B(X)$ is quasibounded and
continuous.\begin{proof} The original action on $(X,d)$ is
$\pi$-quasibounded and continuous. Since $\textbf{0}$ is an isolated
point in $\overline{X}$ then the induced action on
$(\overline{X},d)$ is also continuous and quasibounded. Thus, for
every 
   $\varepsilon> 0$ and $g_0 \in G$, there exist: 
  $1\geq \delta>0$ and a neighborhood $O(g_0)$ of $g_0$ in $G$ such that
for every $(x,y) \in \overline{X}\times \overline{X}$ with
$d(x,y)<\delta$ and  for every $g \in O$ we have
 $$d(gx,gy) < \varepsilon. 
 $$ By the definition of $||\cdot||$ it is easy
to see that
$$||gu|| < \varepsilon  \ \ \ \ \forall \ ||u|| < \delta, \ g \in O.$$ This implies that the action
$\overline{\pi}$ of $G$ on $B(X)$ is quasibounded.
Claim 5 implies that $\iota: X \hookrightarrow B(X)$ is a
topological $G$-embedding. Since
$\iota(X)$ algebraically spans $B(X)$ and $B(X)$ is a topological group, it easily follows that every
orbit mapping 
 $G \to B(X), \ g \mapsto gu$ is continuous for every
$u \in B(X)$. Thus we can conclude that $\overline{\pi}$ is continuous (see 
the remark after Definition \ref{d:quasib}) and $B(X)$ is a
$G$-group. 
\end{proof}

\vskip 0.3cm

By Claims 5 and 6 (see also the remark after Claim 6)  $||\cdot||$
is an ultra-norm on  $B(X)$ which extends the ultra-metric $d$
defined on $\overline{X},$ and it can be viewed as a Graev type
ultra-norm. To justify this last remark and the assertion (1) of our theorem observe that $||\cdot||$
satisfies, in addition,  the following maximal property. \vskip 0.3cm
\noindent \textbf{Claim 9:} Let $\sigma$ be an ultra-norm on $B(X)$
such that
$$\sigma(x-y)=d(x,y)\ \forall x,y\in \overline{X}.$$ Then
$||\cdot||\geq \sigma.$
\begin{proof}
Let $u$ be a nonzero element of $B(X).$ By Claim 4 there exists a
normal configuration  $$ \omega=\{(x_1,x_2), (x_3, x_4), \cdots ,
(x_{2n-1}, x_{2n})\},
$$ such that $u=\sum_{i=1}^{n} (x_{2i-1}-x_{2i})$ and $||u||= \max_{1 \leq i \leq n} d(x_{2i-1},x_{2i}).$

Now, $\sigma$ is  an ultra-norm and we also have
$$\sigma(x-y)=d(x,y)\ \forall x,y\in \overline{X}.$$ By induction
we obtain that
$$\sigma(u)=\sigma(\sum_{i=1}^{n} (x_{2i-1}-x_{2i}))\leq \max_{1 \leq i \leq n}
\sigma(x_{2i-1}-x_{2i})= \max_{1 \leq i \leq n}
d(x_{2i-1},x_{2i})=||u||.$$
\end{proof}

 The proof of assertion (2) in view of the description of $B_{\sna}(X,\mathcal{U}_d)$ given by Theorem \ref{thm:desna},
 follows from the fact that for every $0< \eps<1$ the subgroup generated by
$$\{x-y \in B(X): d(x,y)<\eps \}$$  is precisely the $\eps$-neighborhood of $\mathbf{0}$ in $E$.

 Summing up we finish the proof of Theorem \ref{t:AE}.
\end{proof}

\begin{remark} \label{r:Gr=fr}
Similarly we can assign for an ultra-metric space $(X,d)$  the Graev type groups $A_{Gr}(X,d)$ and $F_{Gr}(X,d).$
Moreover, one may show (making use of Theorems \ref{thm:desna}.1 and \ref{thm:nafin}.2) that $A_{Gr}(X,d)=A_{\sna}(X,\mathcal{U}_d)$ and $F_{Gr}(X,d)=F^b_{\sna}(X,\mathcal{U}_d).$
\end{remark}


%

\begin{corol}
Every ultra-metric space is isometric to a closed subset of an
ultra-normed Boolean group.
\end{corol}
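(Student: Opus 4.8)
The plan is to derive this statement as an immediate specialization of Theorem \ref{t:AE} to the trivial acting group. First I would take $G=\{e\}$ to be the one-element topological group and endow the given ultra-metric space $(X,d)$ with the trivial action $\pi\colon G\times X\to X$, $\pi(e,x)=x$. The key observation is that this action automatically belongs to $\Unif^G$: the only translation is the identity map, which is uniformly continuous, and quasiboundedness in the sense of Definition \ref{d:quasib} holds trivially, since for any $\varepsilon\in\mathcal{U}_d$ one may take $\delta=\varepsilon$ and $O=G$, noting that $(ex,ey)=(x,y)$ for all $(x,y)\in\delta$. Hence the hypothesis $(X,\mathcal{U}_d)\in\Unif^G$ required by Theorem \ref{t:AE} is satisfied.

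Applying Theorem \ref{t:AE} with this data then produces an ultra-normed Boolean group $(E,||\cdot||)$ and an isometric embedding $\iota\colon X\hookrightarrow E$ whose image $\iota(X)$ is closed in $E$. Forgetting the (trivial) $G$-structure, this is precisely the assertion of the corollary: $(X,d)$ is isometric to a closed subset of an ultra-normed Boolean group.

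I expect no real obstacle here. All the substantive work has already been carried out inside the proof of Theorem \ref{t:AE}: the construction of the Graev type ultra-norm $||\cdot||$ on $B(X)$, the verification of the strong triangle inequality (Claim 5), the isometry property $||x-y||=d(x,y)$ (Claim 6), and the lower bound of Claim 7 that forces $\iota(X)$ to be closed in $E$. The corollary is simply the non-equivariant shadow of that theorem, so the only thing genuinely left to record is the (vacuous) verification of the $\Unif^G$-hypothesis for the trivial group, which I have indicated above.
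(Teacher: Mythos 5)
Your proposal is correct and matches the paper's (implicit) argument: the corollary is stated immediately after Theorem \ref{t:AE} as its non-equivariant specialization, and taking $G=\{e\}$ with the trivial action — which vacuously lies in $\Unif^G$ — yields exactly the isometric embedding with closed image asserted there. Nothing further is needed.
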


By a theorem of Schikhof \cite{Shik},
 every ultra-metric space can
be isometrically embedded
into a suitable non-archimedean valued field. Note that
every non-archimedean valued field is an ultra-normed abelian group.

%
%

\subsection{Continuous actions on Stone spaces}

Assigning to every Stone space $X$ the free profinite group $F_{\scriptscriptstyle Pro}(X)$
we get a natural functor $\gamma$ from the category of Stone $G$-spaces $X$ to the category of all profinite $G$-groups $P$
(see Remark \ref{rem:ubn} for the continuity of the lifted action).
This functor preserves the topological weight and
there exists a canonical $G$-embedding $j_X: X \hookrightarrow \gamma(X)=F_{\scriptscriptstyle Pro}(X)$ where
$F_{\scriptscriptstyle Pro}(X)$ is metrizable if (and only if) $X$ is metrizable (Theorem \ref{t:ProfCard}).

This means, in particular, that every Stone $G$-space is automorphizable in $\mathbf{Pro}$ and
the class of (metrizable) profinite $G$-groups is at least as complex as the class of (metrizable) Stone $G$-spaces.
In contrast, recall that  a compact $G$-space that is not a Stone space may not be automorphizable (see Remark \ref{non-aut}).

By Remark \ref{r:Melnikov} the profinite groups $j(X)$ and $j(Y)$ are topologically isomorphic for infinite Stone spaces $X,Y$ with the same weight.
However, if $X$ and $Y$ are $G$-spaces (dynamical systems) then the corresponding $G$-spaces $j(X)$ and $j(Y)$ need not are $G$-isomorphic.

\section{Appendix}
\label{sec:gra}



 By  Graev's  Extension Theorem (see \cite{GRA}),
  for every metric $d$ on  $X\cup \{e\}$ there exists a metric $\delta$ on $F(X)$ with the following
properties: \ben
\item $\delta$ extends $d.$
\item $\delta$ is a two sided invariant metric on $F(X).$
\item $\delta$ is maximal among all invariant metrics on $F(X)$
extending $d.$ \een


 Savchenko-Zarichnyi \cite{SZ} presented an ultra-metrization $\widehat{d}$ of the free group  of an ultra-metric space $(X,d)$ with
$diam(X)\leq 1.$ Gao \cite{GAO} studied Graev type ultra-metrics $\delta_u.$
The metrics $\delta_u, \widehat{d} $ satisfy properties $(1)$ and $(2)$ above. As to the maximal property $(3)$
one may show the following:

\begin{thm}\label{thm:grau}
 Let $d$ be an ultra-metric on $\overline{X}:=X\cup X^{-1}\cup \{e\}$ for which
the following conditions hold for every $x,y\in X\cup \{e\}$:
\ben\item $d(x^{-1},y^{-1})=d(x,y).$
\item $d(x^{-1},y)=d(x,y^{-1}).$
\een     Then: \ben [(a)]
\item The Graev ultra-metric $\delta_u$ is maximal among all
invariant ultra-metrics on $F(X)$ which extend the metric $d$
defined on $\overline{X}.$
\item If, in addition, $d(x^{-1},y)=d(x,y^{-1})=\max\{d(x,e),d(y,e)\}$
then $\delta_u$  is  maximal among all invariant ultra-metrics on
$F(X)$ which extend the metric $d$ defined on $X\cup \{e\}.$
\item If $diam(X)\leq 1$ and $d(x^{-1},y)=d(x,y^{-1})=1$ then $\delta_u=\widehat{d}.$ \een
\end{thm}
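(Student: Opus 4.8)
The plan is to treat all three parts by comparing the Graev ultra-norm $N_u(w):=\delta_u(w,e)$ with the norm $N_\rho(w):=\rho(w,e)$ of an arbitrary competing \emph{invariant} ultra-metric $\rho$ on $F(X)$. Since both $\delta_u$ and $\rho$ are two-sided invariant, each is determined by its norm, and the asserted maximality $\rho\le\delta_u$ is equivalent to $N_\rho(w)\le N_u(w)$ for every $w\in F(X)$. The entire argument will rest on two elementary consequences of invariance, valid for any invariant norm $\|\cdot\|$: the conjugation-invariance $\|aba^{-1}\|=\|b\|$, and the identity $\|a_ia_j\|=\rho(a_i,a_j^{-1})$ obtained by right translation.

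For part (a) I would first recall, from the construction of $\delta_u$ in \cite{GAO} together with hypotheses (1)--(2), that $N_u(w)$ equals $\min_{\theta}\varphi(\theta)$, the minimum over (non-crossing) matchings $\theta$ of a word $a_1\cdots a_{2n}$ representing $w$, where $\varphi(\theta)=\max_{\{i,j\}\in\theta}d(a_i,a_j^{-1})$. Fixing an optimal non-crossing matching $\theta$ with $N_u(w)=\varphi(\theta)$, the key step is the classical Graev decomposition: peeling off an innermost adjacent matched pair $(i,i+1)$, setting $c=a_ia_{i+1}$ and $g=a_1\cdots a_{i-1}$, one writes $w=(gcg^{-1})\,w'$ where $w'$ is $w$ with that pair deleted and $\theta$ restricts to a non-crossing matching of $w'$. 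Iterating gives $w=\prod_l g_lc_lg_l^{-1}$ with each $c_l=a_{i_l}a_{j_l}$ a matched factor. Applying the two properties of $\rho$, conjugation-invariance yields $\|g_lc_lg_l^{-1}\|_\rho=\|c_l\|_\rho=\rho(a_{i_l},a_{j_l}^{-1})=d(a_{i_l},a_{j_l}^{-1})$ (the last step because $\rho$ extends $d$ on $\overline{X}$), and then the strong triangle inequality gives $N_\rho(w)\le\max_l d(a_{i_l},a_{j_l}^{-1})=\varphi(\theta)=N_u(w)$. Hence $\rho\le\delta_u$.

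For part (b) the only new ingredient is that an invariant ultra-metric $\rho$ extending $d$ merely on $X\cup\{e\}$ automatically respects the enlarged data on $\overline{X}$. Indeed, invariance forces $\rho(x^{-1},y^{-1})=\rho(x,y)$ and $\rho(x^{-1},y)=\rho(x,y^{-1})$, so (1)--(2) come for free, while the strong triangle inequality gives $\rho(x^{-1},y)=\|xy\|_\rho\le\max\{\|x\|_\rho,\|y\|_\rho\}=\max\{d(x,e),d(y,e)\}$. Under the hypothesis of (b) the right-hand side is exactly the prescribed value of $d(x^{-1},y)$, so every matched factor still satisfies $\|c_l\|_\rho=\rho(a_{i_l},a_{j_l}^{-1})\le d(a_{i_l},a_{j_l}^{-1})$, with equality whenever the latter is a genuine distance on $X\cup\{e\}$. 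The estimate of part (a) then goes through verbatim with $\le$ in place of $=$, again yielding $N_\rho(w)\le\varphi(\theta)=N_u(w)$.

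Finally, for part (c) I would observe that under $\diam(X)\le 1$ and $d(x^{-1},y)=d(x,y^{-1})=1$ hypotheses (1)--(2) hold, so part (a) already makes $\delta_u$ the maximal invariant ultra-metric extending this particular $\overline{X}$-metric. The plan is then to verify, from the Savchenko--Zarichnyi construction in \cite{SZ}, that $\widehat{d}$ is an invariant ultra-metric on $F(X)$ restricting to the same $\overline{X}$-metric (this is where $\diam(X)\le 1$ enters), whence $\widehat{d}\le\delta_u$ by maximality; the reverse inequality $\delta_u\le\widehat{d}$ will follow once $\widehat{d}$ is shown to enjoy the same maximal property, since a maximal invariant ultra-metric extending a fixed datum is unique. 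I expect the main obstacle to be precisely this reconciliation in (c): aligning the explicit Savchenko--Zarichnyi formula with the Graev matching formula for $\delta_u$ on $\overline{X}$ and re-deriving the conjugate decomposition for $\widehat{d}$. The other delicate point, underlying all three parts, is the combinatorial guarantee that the optimal configuration can be taken non-crossing, so that the decomposition $w=\prod_l g_lc_lg_l^{-1}$ is actually available.
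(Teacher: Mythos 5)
The paper itself gives no proof of Theorem \ref{thm:grau}: it is introduced with ``one may show the following'' and is immediately followed by the bibliography, so there is no in-text argument to compare yours against; I can only judge the proposal on its own terms. Parts (a) and (b) are essentially correct. Your decomposition $w=\prod_l g_lc_lg_l^{-1}$ obtained by peeling an innermost matched pair of a non-crossing match, together with conjugation invariance and $\|a_ia_j\|_\rho=\rho(a_i,a_j^{-1})=d(a_i,a_j^{-1})$, does give $N_\rho(w)\le\varphi(\theta)$ and hence $\rho\le\delta_u$ (just remember to treat unmatched letters, which contribute $\|ga_ig^{-1}\|_\rho=d(a_i,e)$). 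Note, though, that the same bound follows more directly from the telescoping inequality $\rho(u_1\cdots u_n,v_1\cdots v_n)\le\max_i\rho(u_i,v_i)$ applied to $w$ and the trivial word $w^{\theta}$; non-crossingness is then needed only to know that $w^{\theta}$ reduces to $e$, which is built into Gao's definition of a match, so the combinatorial worry you raise at the end is not actually an obstacle. Your reduction of (b) to (a) --- invariance of $\rho$ forces conditions (1)--(2) on its restriction to $\overline{X}$ for free, and the strong triangle inequality forces $\rho(x^{-1},y)=\|xy\|_\rho\le\max\{d(x,e),d(y,e)\}=d(x^{-1},y)$, so the restriction of $\rho$ to $\overline{X}$ is dominated by $d$ --- is exactly the right use of the extra hypothesis.

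The genuine gap is part (c), and you flag it yourself. What your plan actually delivers (modulo checking that the Savchenko--Zarichnyi metric $\widehat{d}$ is an invariant ultra-metric whose restriction to $\overline{X}$ assigns the prescribed value $1$ to the mixed pairs) is the single inequality $\widehat{d}\le\delta_u$ via part (a). The reverse inequality is the entire content of the asserted equality $\delta_u=\widehat{d}$, and your proposed route --- show that $\widehat{d}$ enjoys the same maximality property and appeal to uniqueness of the maximal extension --- merely restates what must be proved: nothing in the proposal engages with the Savchenko--Zarichnyi construction itself, which is given by a quite different formula (via the quotient homomorphisms $F(X)\to F(X/\eps)$ rather than via matchings), and the ``reconciliation'' you defer is precisely the missing proof. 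To complete (c) you must either show directly that for every $w$ the matching value $\min_\theta\varphi(\theta)$ is attained by the Savchenko--Zarichnyi infimum (for instance by proving $f_\eps(w)=e$ in $F(X/\eps)$ exactly when $w$ admits a match of $d$-length $<\eps$, which connects to Lemma \ref{l:eps} and Theorem \ref{thm:nafin} of this paper), or rerun the maximality argument of (a) with the Savchenko--Zarichnyi combinatorics in place of matches. As written, (a) and (b) are proofs; (c) is a plan.
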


\bibliographystyle{amsalpha}

\end{document}